\definecolor{pgray}{gray}{0.8}
\newtheorem{theorem}{Theorem}[section]
\newtheorem{proposition}[theorem]{Proposition}
\newtheorem{lemma}[theorem]{Lemma}
\numberwithin{equation}{section}
\title{\mbox{}}
\begin{document}
\begin{center}
{\bf \Large{
	Asymptotic Expansion of Solutions to the Drift-Diffusion Equation with Fractional Dissipation
}}\\
\vspace{5mm}
{\sc\large
	Masakazu Yamamoto}\footnote{Graduate School of Science and Technology,
	Niigata University,
	Niigata 950-2181, Japan}\qquad
{\sc\large
	Yuusuke Sugiyama}\footnote{Department of Mathematics,
Tokyo University of Science,
Tokyo 162-8601, Japan}
\end{center}
\maketitle
\vspace{-15mm}
%
\begin{abstract}
The initial-value problem for the drift-diffusion equation arising from the model of semiconductor device simulations is studied.
The dissipation on this equation is given by the fractional Laplacian $(-\Delta)^{\theta/2}$.
Large-time behavior of solutions to the drift-diffusion equation with $0 < \theta \le 1$ is discussed.
When $\theta > 1$, large-time behavior of solutions is known.
However, when $0 < \theta \le 1$, the perturbation methods used in the preceding works would not work.
Large-time behavior of solutions to the drift-diffusion equation with $0 < \theta \le 1$ is discussed.
Particularly, the asymptotic expansion of solutions with high-order is derived.
\end{abstract}

\section{Introduction}
We study the following initial-value problem for the drift-diffusion model for semiconductors:
\begin{equation}\label{DD}
\left\{
\begin{array}{lr}
	\partial_t u + (-\Delta)^{\theta/2} u - \nabla\cdot(u\nabla\psi) = 0,
	&
	t>0,~ x \in \mathbb{R}^n,\\
	-\Delta\psi = u,
	&
	t>0,~ x \in \mathbb{R}^n,\\
	u(0,x) = u_0 (x),
	&
	x \in \mathbb{R}^n,
\end{array}
\right.
\end{equation}
where $n \ge 2,~ 0 < \theta < n,~ \partial_t = \partial / \partial t,~ \nabla = (\partial_1,\ldots,\partial_n),~ \partial_j = \partial / \partial x_j,~ \Delta=\partial_1^2 + \cdots + \partial_n^2$, and $(-\Delta)^{\theta/2} \varphi = \mathcal{F}^{-1} [|\xi|^\theta \mathcal{F} [\varphi]]$.
The unknown functions $u$ and $\psi : (0,\infty) \times \mathbb{R}^n \to \mathbb{R}$ stand for the density of electrons and the potential of electromagnetic field, respectively.
The drift-diffusion equation with $\theta = 2$ is derived from conservation of mass of electrons.
The fractional Laplacian is associated to the jumping process in the stochastic process.
Since electrons on a semiconductor may jump from a dopant to another, the fractional Laplacian is suitable to describe their dissipation.
In the case $\theta > 1$, well-posedness and global existence of solutions are shown.
Moreover, large-time behavior of the solution is discussed (cf. \cite{Bl-Dl,K-K,K-O,M-T,M,O-S,Ym}).
When $\theta > 1$, we can refer to many preceding works to derive the asymptotic expansion of the solution of \eqref{DD} as $t\to\infty$ (cf. for example \cite{Cr,E-Z,F-M,I-I-K,I-K,I-K-K,KtM_B,N-S-U}).
In this case, perturbation methods are effective, since the highest-order derivative is on the dissipation term.
When $\theta = 1,~ \nabla u$ on the nonlinear term balances the dissipation $(-\Delta)^{1/2} u$.
In the case $0 < \theta < 1$, the highest-order derivative is on the nonlinear term.
Therefore, the perturbation methods would not work as discussed in more detail later.
In \cite{Y-S}, employing the energy method, the authors estimate the difference between the solution of \eqref{DD} with $\theta = 1$ and its second-order asymptotic expansion in $L^q (\mathbb{R}^n)$ for $1 < q < \infty$.
But the cases $q = 1$ and $q = \infty$ are excepted.
The purpose of this paper is to give the third-order asymptotic expansion for \eqref{DD} with $0 < \theta \le 1$.
Especially, we will estimate the difference between the solution and the asymptotic expansion in $L^q (\mathbb{R}^n)$ with $1 \le q \le \infty$.
Our main theorems are extensions from the results of the case $1 < \theta \le 2$ in \cite{Ym} to $0 < \theta \le 1$.
For the drift-diffusion equation with $0 < \theta \le 1$, we refer to the preceding works for the following two-dimensional quasi-geostrophic equation:
\[
	\left\{
	\begin{split}
		& \partial_t u + (-\Delta)^{\theta/2} u - \nabla^{\bot} \psi \cdot \nabla u = 0,
		& t > 0,~ x \in \mathbb{R}^2,\\
		& (-\Delta)^{1/2} \psi = u,
		& t > 0,~ x \in \mathbb{R}^2,
	\end{split}
	\right.
\]
where $\nabla^{\bot} = (- \partial_2, \partial_1)$.
For the quasi-geostrophic equation, well-posedness, and global in time existence for small initial data in the scale-invariant Besov spaces is shown (cf.\cite{Ch-L,C-C-W,Crdb-Crdb}).
By analogy from the method in the quasi-geostrophic equation, for the drift-diffusion equation with $0 < \theta \le 1$, well-posedness and global existence for small initial data in the scale-invariant Besov space are shown in \cite{S-Y-K}.
In \cite{S-Y-K}, global existence for positive initial data is also studied.
We consider the solution such that
\begin{equation}\label{7}
	u \in L^\infty \bigl( 0,\infty; L^1 (\mathbb{R}^n) \cap L^\infty (\mathbb{R}^n) \bigr),
	\quad
	\left\| u (t) \right\|_{L^p (\mathbb{R}^n)}
	\le
	C (1+t)^{-\frac{n}\theta (1-\frac1p)}
\end{equation}
for $1 \le p \le \infty$, and
\begin{equation}\label{8}
	u \in C^\infty \left( (0,\infty),H^\infty (\mathbb{R}^n) \right).
\end{equation}
In \cite{Br-Cl,L-L-Z,S-Y-K,Y-S-K}, it is shown that solutions satisfy \eqref{7} and \eqref{8}, if initial data are sufficiently smooth and nonnegative.
Upon the above assumption, the conservative force fulfills that
\begin{equation}\label{decay-pot}
	\bigl\| \nabla \psi (t) \bigr\|_{L^p (\mathbb{R}^n)}
	\le
	C (1+t)^{-\frac{n}\theta (1-\frac1p) + \frac1\theta}
\end{equation}
for $\frac{n}{n-1} < p \le \infty$ (see Proposition \ref{HLSinfty} in Section \ref{2}).
To discuss large-time behavior of the solution, we introduce the fundamental solution of $\partial_t u + (-\Delta)^{\theta/2} u = 0$:
\[
	G_\theta (t,x) = \mathcal{F}_\xi^{-1} \bigl[ e^{-t |\xi|^\theta} \bigr] (x).
\]
In the case $\theta = 1$, this function equals to the Poisson kernel
\[
	P (t,x)
	=
	\pi^{-\tfrac{n+1}2}\Gamma(\tfrac{n+1}2)
	\, t\left( t^2 + |x|^2 \right)^{-\tfrac{n+1}2}.
\]
The Duhamel formulae rewrites the solution of \eqref{DD} by the mild solution as follows:
\begin{equation}\label{MS}
	u(t) = G_\theta (t) * u_0 + \int_0^{t/2} \nabla G_\theta (t-s) * (u\nabla (-\Delta)^{-1}u) (s) ds + \int_{t/2}^t G_\theta (t-s) * \nabla\cdot (u\nabla (-\Delta)^{-1}u) (s) ds.
\end{equation}
We remark that, in the case $\theta > 1$, the second and the third terms are combined into $\int_0^t \nabla G_\theta (t-s) * (u\nabla (-\Delta)^{-1}u) (s) ds \in C ([0,T],L^1 (\mathbb{R}^n) \cap L^\infty (\mathbb{R}^n))$ since $\nabla G_\theta \in L^1 (0,T,L^1 (\mathbb{R}^n))$ and $u\nabla(-\Delta)^{-1}u \in L^\infty (0,T,L^1 (\mathbb{R}^n) \cap L^\infty (\mathbb{R}^n))$.
But, if $\theta \le 1$, $\nabla G_\theta \not\in L^1 (0,T,L^1 (\mathbb{R}^n))$, which requires estimates for $\nabla u$.
Furthermore, the third-order asymptotic expansion needs some estimates for $xu$ (see the remark after Theorem \ref{th1}).
However, \eqref{MS} does not work in those estimates, since the third term of \eqref{MS} contains $\nabla u$.
Employing the energy method with Kato and Ponce's commutator estimate and the positivity lemma for the fractional Laplacian, we get those estimate for $\nabla u$ and $xu$ respectively (see Propositions \ref{lem-dr} and \ref{lem-wt}).
Our first assertion is established as follows.
\begin{theorem}\label{th1}
	Let $n = 3$ and $0 < \theta < 1$, or $n \ge 4$ and $0 < \theta \le 1$.
	Assume that $u_0 \in L^1 (\mathbb{R}^n,(1+|x|^2)dx) \cap L^\infty (\mathbb{R}^n)$ and the solution $u$ satisfies \eqref{7} and \eqref{8}.
	Then
	\[
	\begin{split}
		\biggl\| &u(t) - M G_\theta (t) - m \cdot \nabla G_\theta (t)
		- \sum_{|\alpha| = 2} \frac{\nabla^\alpha G_\theta (t)}{\alpha!} \int_{\mathbb{R}^n} (-y)^\alpha u_0 (y) dy\\
		&- \sum_{|\beta| = 1} \nabla^\beta \nabla G_\theta (t) \cdot \int_0^\infty \int_{\mathbb{R}^n} (-y)^\beta \left( u\nabla(-\Delta)^{-1} u \right) (s,y)  dyds
		\biggr\|_{L^q (\mathbb{R}^n)}
		=
		o \bigl( t^{-\frac{n}\theta (1-\frac1q) - \frac2\theta} \bigr)
	\end{split}
	\]
	as $t \to \infty$ for $1 \le q \le \infty$, where $M = \int_{\mathbb{R}^n} u_0 (y) dy$ and $m = \int_{\mathbb{R}^n} (-y) u_0 (y) dy$.
\end{theorem}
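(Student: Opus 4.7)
I split the Duhamel representation \eqref{MS} as $u(t)=\mathcal{L}(t)+N_{\mathrm{fp}}(t)+N_{\mathrm{rp}}(t)$, where $\mathcal{L}(t)=G_\theta(t)*u_0$, $N_{\mathrm{fp}}(t)=\int_0^{t/2}\nabla G_\theta(t-s)*(u\nabla(-\Delta)^{-1}u)(s)\,ds$, and $N_{\mathrm{rp}}(t)=\int_{t/2}^t G_\theta(t-s)*\nabla\!\cdot\!(u\nabla(-\Delta)^{-1}u)(s)\,ds$. Writing $\psi=(-\Delta)^{-1}u$, the three linear terms of the expansion will come from Taylor-expanding $\mathcal{L}$, the fourth (nonlinear) term from Taylor-expanding $N_{\mathrm{fp}}$, and $N_{\mathrm{rp}}$ will be absorbed into the $o$-remainder.

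For $\mathcal{L}$, I Taylor expand $G_\theta(t,x-y)$ in $y$ about $0$ to second order. The remainder $R(t,\cdot,y)$ satisfies $\|R(t,\cdot,y)\|_{L^q}\lesssim|y|^3 t^{-n(1-1/q)/\theta-3/\theta}$ by the integral form of the remainder combined with the scaling $\|\nabla^3 G_\theta(t)\|_{L^q}\lesssim t^{-n(1-1/q)/\theta-3/\theta}$, and $\|R(t,\cdot,y)\|_{L^q}\lesssim|y|^2 t^{-n(1-1/q)/\theta-2/\theta}$ by the triangle inequality for $|y|\gtrsim t^{1/\theta}$; splitting $\int\|R(t,\cdot,y)\|_{L^q}|u_0(y)|\,dy$ at $|y|=\epsilon t^{1/\theta}$ and using $\int_{|y|>\epsilon t^{1/\theta}}|y|^2|u_0(y)|\,dy\to 0$ (which holds since $|y|^2 u_0\in L^1$) produces the required $o(t^{-n(1-1/q)/\theta-2/\theta})$ bound. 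For $N_{\mathrm{fp}}$, I Taylor expand $\nabla G_\theta(t-s,x-y)$ in $y$ to first order. The zeroth-order coefficient carries the factor $\int(u\nabla\psi)(s,y)\,dy$, and an integration by parts using $u=-\Delta\psi$ gives
\[
\int u\nabla\psi\,dy=-\int\Delta\psi\,\nabla\psi\,dy=\tfrac12\int\nabla|\nabla\psi|^2\,dy=0,
\]
the boundary terms vanishing under \eqref{decay-pot}; thus this term is absent from the expansion. After freezing $\nabla^\beta\nabla G_\theta(t-s)$ to $\nabla^\beta\nabla G_\theta(t)$ (cost controlled by $\partial_t G_\theta=-(-\Delta)^{\theta/2}G_\theta$, giving a factor $s$ times $\|(-\Delta)^{\theta/2}\nabla^\beta\nabla G_\theta(t)\|_{L^q}\lesssim t^{-n(1-1/q)/\theta-2/\theta-1}$) and extending the $s$-integral from $[0,t/2]$ to $[0,\infty)$, the first-order coefficient reproduces the fourth explicit term of the expansion. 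The three resulting errors (second-order Taylor remainder, freezing, tail extension) are all $o(t^{-n(1-1/q)/\theta-2/\theta})$ thanks to the decay of $\|(1+|y|^2)(u\nabla\psi)(s)\|_{L^1}$, which follows from \eqref{7}, \eqref{decay-pot}, and the moment estimate of Proposition \ref{lem-wt}; this decay is just fast enough precisely under the dimensional restriction $n=3,\theta<1$ or $n\ge 4,\theta\le 1$.

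The main obstacle is $N_{\mathrm{rp}}(t)$: when $\theta\le 1$ the kernel $\nabla G_\theta(t-s)$ is not integrable in $s$ near $s=t$, so the divergence cannot be moved onto $G_\theta$ for free. I therefore distribute the derivative, $\nabla\!\cdot\!(u\nabla\psi)=\nabla u\cdot\nabla\psi-u^2$, and estimate
\[
\|N_{\mathrm{rp}}(t)\|_{L^q}\lesssim\int_{t/2}^t(t-s)^{-n(1-1/r)/\theta}\bigl(\|\nabla u(s)\|_{L^{p_1}}\|\nabla\psi(s)\|_{L^{p_2}}+\|u(s)\|_{L^{2p_3}}^2\bigr)\,ds,
\]
with H\"older exponents $1+1/q=1/r+1/p_1+1/p_2$ (and analogously for the quadratic term). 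The $L^{p_1}$-bound on $\nabla u$ comes from the commutator-based estimate of Proposition \ref{lem-dr}, the $L^{p_2}$-bound on $\nabla\psi$ from \eqref{decay-pot}, and the remaining $u^2$ term from \eqref{7}. Choosing $(r,p_1,p_2,p_3)$ so that the $s$-integrand is integrable at $s=t$ and decays in $t$ strictly faster than $t^{-n(1-1/q)/\theta-2/\theta}$, particularly at the endpoints $q=1$ and $q=\infty$, is the delicate bookkeeping step that I expect to be the main technical hurdle of the proof.
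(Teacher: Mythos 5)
Your decomposition is exactly the paper's: the mild solution \eqref{MS} split into the linear part, the far-past nonlinear part (Taylor-expanded to first order in $y$, with the zeroth-order coefficient killed by $\int u\nabla(-\Delta)^{-1}u\,dy=0$, the kernel frozen from $t-s$ to $t$ at a cost of $s\,\partial_t\nabla^\beta\nabla G_\theta$, and the $s$-integral extended to $[0,\infty)$), and the recent-past part with the divergence distributed as $\nabla u\cdot\nabla\psi-u^2$ and controlled via Proposition \ref{lem-dr} and \eqref{decay-pot}. However, there are two genuine gaps.

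First, you claim the far-past Taylor/freezing/tail errors are controlled by ``the decay of $\|(1+|y|^2)(u\nabla\psi)(s)\|_{L^1}$, which follows from \eqref{7}, \eqref{decay-pot}, and Proposition \ref{lem-wt}.'' It does not: Proposition \ref{lem-wt} only bounds the \emph{first} moment $\|x_j u\|_{L^{n/(n-1)}}\le C\log(e+t)$, which paired with $\|\nabla\psi\|_{L^n}$ gives $\|y(u\nabla\psi)(s)\|_{L^1}\le C(1+s)^{-(n-2)/\theta}\log(e+s)$ and nothing about the second moment; no estimate for $|x|^2u$ is available anywhere in the paper (indeed the authors remark that even the first moment requires the extra energy argument of Proposition \ref{lem-wt}). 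The second-order Taylor remainder of $\nabla G_\theta(t-s,x-y)$ genuinely carries a factor $|y|^2$, so as written this step fails. The fix is the same cutoff device you already used for the linear term: split at $|y|\le R(t)$ with $R(t)=o(t^{1/\theta})$, bound one factor of $|y|$ by $R(t)$ on the inner region (so only the first moment enters, times $R(t)(t-s)^{-\frac{n}{\theta}(1-\frac1q)-\frac3\theta}$), and on the outer region revert to the first-order form and conclude by dominated/monotone convergence. This is precisely how the paper handles its remainder $r_1$, and it is where the hypothesis $\theta<n-2$ (i.e.\ $n=3,\ \theta<1$ or $n\ge4,\ \theta\le1$) is used, through the convergence of $\int_0^\infty(1+s)^{-(n-2)/\theta}\log(e+s)\,ds$.

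Second, you leave the recent-past estimate as ``the delicate bookkeeping step that I expect to be the main technical hurdle,'' i.e.\ you do not actually choose the exponents. This is not a side issue: it is the content of inequality \eqref{ap-bs4} in Proposition \ref{ap}, whose proof occupies most of that proposition. The paper's choices are: for $L^1$, take $\|\nabla u\|_{L^2}\|\nabla(-\Delta)^{-1}u\|_{L^2}+\|u\|_{L^2}^2\le C(1+s)^{-n/\theta}$ (with $\|\nabla u\|_{L^2}$ from Proposition \ref{lem-dr} with $\sigma=1$ and $\|\nabla(-\Delta)^{-1}u\|_{L^2}$ from Hardy--Littlewood--Sobolev), and for $L^\infty$, take $\sigma=\frac{(1-\varepsilon)n}{2}$ and $\frac1r=\frac\varepsilon2$ so that the kernel singularity $(t-s)^{-\varepsilon n/(2\theta)}$ is integrable while $\|\nabla u\|_{L^r}$ is reached from $\|\nabla(-\Delta)^{\sigma/2}u\|_{L^2}$ by Sobolev embedding; interpolation gives $C(1+t)^{-\frac{n}{\theta}(1-\frac1q)-\frac{n}{\theta}+1}$, which beats $t^{-\frac{n}{\theta}(1-\frac1q)-\frac2\theta}$ exactly when $\theta<n-2$. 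Note in particular that Proposition \ref{lem-dr} only yields $L^2$-based bounds on $(-\Delta)^{\sigma/2}u$, so your $L^{p_1}$-bound on $\nabla u$ for $p_1\ne2$ must pass through a high-order $\sigma$ and Sobolev embedding; the naive first-derivative estimate does not suffice at $q=\infty$.
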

We remark that the decay properties of $u$ ensure that
$
	\int_{\mathbb{R}^n} |y| |u\nabla(-\Delta)^{-1}u| dy
	 \le C (1+s)^{-\frac{n-2}\theta}
$
(see Proposition \ref{lem-wt}).
Hence the coefficient $\int_0^\infty \int_{\mathbb{R}^n} (-y)^\beta (u\nabla(-\Delta)^{-1}u) (s,y) dy ds$ in Theorem \ref{th1} converges to a finite value since $\theta < n-2$.
However, if $\theta \ge n-2$, this coefficient may diverge to infinity.
In this case, we should include some correction terms in the asymptotic expansion.
When $n = 2$ and $0 < \theta \le 1$, let $J$ be given by
\begin{equation}\label{J}
	J(t) = \int_0^{t/2} \nabla G_\theta (t-s) * (G_\theta \nabla (-\Delta)^{-1} G_\theta) (s) ds + \int_{t/2}^t G_\theta (t-s) * \nabla \cdot (G_\theta \nabla (-\Delta)^{-1} G_\theta) (s) ds.
\end{equation}
Then the same argument as in \cite{Y} yields that
\[
	J \in C \left( (0,\infty), L^1 (\mathbb{R}^2) \cap L^\infty (\mathbb{R}^2) \right),\quad
	J \neq 0,
\]
and
\begin{equation}\label{scJ}
	\| J(t) \|_{L^q (\mathbb{R}^2)} = t^{-\frac2\theta(1-\frac1q)-\frac{2-\theta}\theta} \| J(1) \|_{L^q (\mathbb{R}^2)}
\end{equation}
for $1 \le q \le \infty$.
Moreover, $J$ satisfies the following.
\begin{theorem}\label{th2}
	Let $n = 2,~ 0 < \theta < 1,~ u_0 \in L^1 (\mathbb{R}^2,(1+|x|^2)dx) \cap L^\infty (\mathbb{R}^2)$, and $J$ be given by \eqref{J}.
	Assume that the solution $u$ satisfies \eqref{7} and \eqref{8}.
	Then
	\[
	\begin{split}
		\biggl\| &u(t) - M G_\theta (t) - m \cdot \nabla G_\theta (t) - M^2 J(t)
		- \sum_{|\alpha| = 2} \frac{\nabla^\alpha G_\theta (t)}{\alpha!} \int_{\mathbb{R}^2} (-y)^\alpha u_0 (y) dy\\
		&- \sum_{|\beta| = 1} \nabla^\beta \nabla G_\theta (t) \cdot \int_0^\infty \int_{\mathbb{R}^2} (-y)^\beta \left( u\nabla(-\Delta)^{-1} u - M^2 G_\theta \nabla (-\Delta)^{-1} G_\theta  \right)(s,y) dyds
		\biggr\|_{L^q (\mathbb{R}^2)}\\
		=&
		o \bigl( t^{-\frac{2}\theta (1-\frac1q) - \frac2\theta} \bigr)
	\end{split}
	\]
	as $t \to \infty$ for $1 \le q \le \infty$, where $M = \int_{\mathbb{R}^2} u_0 (y) dy$ and $m = \int_{\mathbb{R}^2} (-y) u_0 (y) dy$.
\end{theorem}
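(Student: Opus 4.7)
Following the strategy used for Theorem \ref{th1}, I would start from the mild formulation \eqref{MS}, isolate the leading nonlinear self-interaction of $G_\theta$, and expand everything that remains via Taylor's formula. Set
\[
F(s,y) := (u\nabla(-\Delta)^{-1}u)(s,y) - M^2 (G_\theta \nabla(-\Delta)^{-1} G_\theta)(s,y).
\]
Adding and subtracting $M^2 G_\theta\nabla(-\Delta)^{-1}G_\theta$ inside each of the two nonlinear integrals of \eqref{MS} and recognising the definition \eqref{J} of $J$ yields
\[
u(t) = G_\theta(t) * u_0 + M^2 J(t) + I_1(t) + I_2(t),
\]
with $I_1(t) = \int_0^{t/2} \nabla G_\theta(t-s) * F(s) ds$ and $I_2(t) = \int_{t/2}^t G_\theta(t-s) * \nabla \cdot F(s) ds$. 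A standard second-order spatial Taylor expansion of $G_\theta(t)*u_0$, using the moment assumption $u_0 \in L^1(\mathbb{R}^2,(1+|x|^2) dx)$, contributes the three linear-in-$u_0$ summands of the expansion with a remainder of the required order $o(t^{-(2/\theta)(1-1/q)-2/\theta})$ in every $L^q$, $1\le q\le\infty$, via dominated convergence.

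For $I_1$, the decisive input is the antisymmetry identity $\int_{\mathbb{R}^2}(u\nabla(-\Delta)^{-1}u)(s,y) dy = 0$, obtained from the odd symmetry of the two-dimensional Newton-potential kernel upon swapping the two integration variables, together with the analogous identity for $G_\theta\nabla(-\Delta)^{-1}G_\theta$; consequently $\int F(s,\cdot) dy = 0$ for every $s$. Expanding $\nabla G_\theta(t-s,x-y) = \nabla G_\theta(t-s,x) + \sum_{|\beta|=1}(-y)^\beta \nabla^\beta \nabla G_\theta(t-s,x) + R_2$, the zeroth-order term drops out and the first-order term, after replacing $\nabla^\beta\nabla G_\theta(t-s)$ by $\nabla^\beta\nabla G_\theta(t)$ and extending the time integral to $(0,\infty)$, produces precisely the fifth summand $\sum_{|\beta|=1}\nabla^\beta\nabla G_\theta(t)\cdot\int_0^\infty\int_{\mathbb{R}^2}(-y)^\beta F(s,y) dy ds$ of the asserted expansion.

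What remains is to bound the four error contributions — the quadratic Taylor remainder inside $I_1$, the time-shift $\nabla^\beta\nabla G_\theta(t-s)-\nabla^\beta\nabla G_\theta(t)$, the tail $\int_{t/2}^\infty$ of the extended time integral, and the boundary piece $I_2$ — by $o(t^{-(2/\theta)(1-1/q)-2/\theta})$ in $L^q$. For $I_2$, since $\nabla G_\theta\notin L^1$ when $\theta\le 1$, the derivative must be kept on the nonlinearity and controlled by the derivative estimate of Proposition \ref{lem-dr} and the weighted estimate of Proposition \ref{lem-wt}. The principal obstacle — and the point where $n=2$ genuinely departs from the setting of Theorem \ref{th1} — is to establish
\[
\int_0^\infty\int_{\mathbb{R}^2}|y||F(s,y)| dy ds<\infty,
\]
since the naive bound in Proposition \ref{lem-wt} gives $\int|y||u\nabla(-\Delta)^{-1}u| dy = O(1)$ in two dimensions, which is not integrable in $s$. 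The remedy is to exploit the cancellation built into $F$: writing $v := u - MG_\theta$,
\[
F = MG_\theta\nabla(-\Delta)^{-1}v + Mv\nabla(-\Delta)^{-1}G_\theta + v\nabla(-\Delta)^{-1}v,
\]
so that bootstrapping from the first-order asymptotic $\|v(s)\|_p = o(\|G_\theta(s)\|_p)$ (already supplied by the lower-order analysis) together with Proposition \ref{lem-wt} applied to $v$ furnishes the additional decay beyond $s^{-1}$ needed for the displayed integrability, closing the argument.
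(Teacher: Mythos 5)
Your overall skeleton matches the paper's proof of Theorem \ref{th2}: the same insertion of $M^2 J$ via \eqref{J}, the same definition of $F = u\nabla(-\Delta)^{-1}u - M^2 G_\theta\nabla(-\Delta)^{-1}G_\theta$, the same Taylor expansion of $\nabla G_\theta(t-s,x-y)$ using $\int_{\mathbb{R}^2} F(s,y)\,dy = 0$, and the same four error contributions. You also correctly identify the new difficulty relative to Theorem \ref{th1}, namely $\int_0^\infty\int_{\mathbb{R}^2}|y|\,|F|\,dyds<\infty$, and your splitting of $F$ through $v = u - MG_\theta$ is algebraically equivalent to the paper's $F = u\nabla(-\Delta)^{-1}v + Mv\nabla(-\Delta)^{-1}G_\theta$.

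The gap is in how you propose to close that estimate. After the term $Mv\nabla(-\Delta)^{-1}G_\theta$ is handled by $\|v\|_{L^1}\,\|y\,\nabla(-\Delta)^{-1}G_\theta\|_{L^\infty}$, the remaining piece forces the pairing $\|y_j u\|_{L^2}\,\|\nabla(-\Delta)^{-1}v\|_{L^2}$, so you need $\|\nabla(-\Delta)^{-1}v(s)\|_{L^2(\mathbb{R}^2)}$ to decay integrably in $s$. This does \emph{not} follow by ``bootstrapping from $\|v(s)\|_{L^p} = o(\|G_\theta(s)\|_{L^p})$'': in two dimensions Hardy--Littlewood--Sobolev fails at the endpoint needed here (Lemma \ref{HLS} would require $p=1$, which is excluded), and in fact $\nabla(-\Delta)^{-1}w\in L^2(\mathbb{R}^2)$ already requires the cancellation $\int w\,dx=0$ together with moment information, not merely smallness of $w$ in $L^1\cap L^\infty$. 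The paper supplies the needed bound through the dedicated Proposition \ref{HLSth2}, proven by returning to the Duhamel formula, Taylor-expanding the linear part against the first moment of $u_0$, and estimating the nonlinear part with the kernel $\partial_k(-\Delta)^{-1}\nabla G_\theta$. An analogous dedicated estimate, Proposition \ref{tl2} for $\|(-\Delta)^{\sigma/2}v\|_{L^2}$, is likewise indispensable for your $I_2$: your appeal to Proposition \ref{lem-dr}, which controls derivatives of $u$ but not of the difference $v$, discards the cancellation inside $F$, and estimating the two halves of $\nabla\cdot F$ separately on $(t/2,t)$ overshoots the target rate $o\bigl(t^{-\frac2\theta(1-\frac1q)-\frac2\theta}\bigr)$. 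Without these two auxiliary propositions, or substitutes for them, the argument does not close.
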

Before the proof of this theorem (see the remark under the proof of Proposition \ref{HLSth2} in Section \ref{prf}), we will confirm that
\begin{equation}\label{coef2}
	\int_0^\infty \int_{\mathbb{R}^2} |y| |u\nabla(-\Delta)^{-1} u - M^2 G_\theta \nabla (-\Delta)^{-1} G_\theta| dyds < +\infty.
\end{equation}
Unfortunately, when $n = 3$ and $\theta = 1$, the first term of $J$ may diverge to infinity since $P\nabla(-\Delta)^{-1}P (s)$ is too singular as $s \to 0$.
For this case, we define
\begin{equation}\label{tJ}
\begin{split}
	\tilde{J} (t)
	=&
	\int_0^{t/2} \int_{\mathbb{R}^3}
		\left( \nabla P (t-s,x-y) + (y\cdot\nabla) \nabla P (t,x) \right)
		\cdot \left( P \nabla (-\Delta)^{-1} P \right) (s,y)
	dyds\\
	&+
	\int_{t/2}^t
		P (t-s) * \nabla \cdot \left( P \nabla (-\Delta)^{-1} P \right) (s)
	ds,\\
	\tilde{K} (t)
	=&
	\frac13 \Delta P (t) \log (1+\tfrac{t}2) \int_{\mathbb{R}^3} (-y) \cdot (P \nabla (-\Delta)^{-1} P) (1,y) dy.
\end{split}
\end{equation}
The function $\tilde{J}$ fulfills
\begin{equation}\label{tJp}
	\tilde{J} \in C \left( (0,\infty), L^1 (\mathbb{R}^3) \cap L^\infty (\mathbb{R}^3) \right),
	\quad
	\bigl\| \tilde{J} (t) \bigr\|_{L^q (\mathbb{R}^3)}
	=
	t^{-3 (1-\frac1q) - 2} \bigl\| \tilde{J} (1) \bigr\|_{L^q (\mathbb{R}^3)}
\end{equation}
for $1 \le q \le \infty$ (see Proposition \ref{proptJ} in Section 3), and provides the asymptotic expansion for the solution as follows.
\begin{theorem}\label{th3}
	Let $n = 3,~ \theta = 1,~ u_0 \in L^1 (\mathbb{R}^3, (1+|x|^2) dx) \cap L^\infty (\mathbb{R}^3)$, and $\tilde{J}$ and $\tilde{K}$ be given by \eqref{tJ}.
	Assume that the solution $u$ satisfies \eqref{7} and \eqref{8}.
	Then
	\[
	\begin{split}
		\biggl\|
			&u(t) - M P (t) - m \cdot \nabla P (t) - M^2 \tilde{K}(t) - M^2 \tilde{J} (t)
			- \sum_{|\alpha| = 2} \frac{\nabla^\alpha P (t)}{\alpha!} \int_{\mathbb{R}^3} (-y)^\alpha u_0 (y) dy\\
			&-\sum_{|\beta| = 1} \nabla^\beta \nabla P (t) \cdot \int_0^\infty \int_{\mathbb{R}^3}
				(-y)^\beta \left( u \nabla (-\Delta)^{-1} u(s,y) - M^2 P \nabla (-\Delta)^{-1} P (1+s,y) \right)
			dyds
		\biggr\|_{L^q (\mathbb{R}^3)}\\
		=& o \bigl( t^{-3 (1-\frac1q) - 2} \bigr)
	\end{split}
	\]
	as $t \to \infty$ for $1 \le q \le \infty$, where $M = \int_{\mathbb{R}^3} u_0 (y) dy$ and $m = \int_{\mathbb{R}^3} (-y) u_0 (y) dy$.
\end{theorem}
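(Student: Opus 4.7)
My plan is to follow the Duhamel-based template of Theorems \ref{th1} and \ref{th2}, starting from the mild solution \eqref{MS} with $\theta=1$, $n=3$. Two new devices are needed in this borderline case: a one-unit time shift that regularizes the profile $P\nabla(-\Delta)^{-1}P$ near $s=0$, and a Taylor expansion of $\nabla P(t-s,x-y)$ about $y=0$ with base point $(t,x)$ rather than $(t-s,x)$, designed to isolate the logarithmic factor that produces $\tilde K$.

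For the linear part, I Taylor expand $P(t,x-y)$ in $y$ up to second order and integrate against $u_0$, producing $M P(t)+m\cdot\nabla P(t)+\sum_{|\alpha|=2}\frac{\nabla^\alpha P(t)}{\alpha!}\int(-y)^\alpha u_0(y)\,dy$ plus a remainder whose $L^q$-norm is $o(t^{-3(1-1/q)-2})$. That remainder is controlled by splitting $\{|y|\le t/2\}\cup\{|y|>t/2\}$ and using $u_0\in L^1(\mathbb{R}^3,(1+|x|^2)dx)\cap L^\infty(\mathbb{R}^3)$ together with pointwise decay of $\nabla^3 P$.

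For the nonlinear terms in \eqref{MS}, I decompose
\[
u\nabla(-\Delta)^{-1}u(s,y)=M^2\bigl(P\nabla(-\Delta)^{-1}P\bigr)(1+s,y)+R(s,y).
\]
Using Propositions \ref{lem-dr} and \ref{lem-wt} together with the lower-order asymptotics $u\approx MP$, I verify that $\int_0^\infty\int_{\mathbb{R}^3}|y||R(s,y)|\,dy\,ds<\infty$ (the analogue of \eqref{coef2}). Feeding $R$ into the two nonlinear Duhamel integrals and Taylor expanding $\nabla P(t-s,x-y)$ in $y$ to first order, the zeroth moment $\int R\,dy$ vanishes by the antisymmetry of the Riesz kernel $\frac{y-z}{|y-z|^3}$ in the double-integral representation of $u\nabla(-\Delta)^{-1}u$ and of $P\nabla(-\Delta)^{-1}P$. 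The first-order moment reproduces the stated coefficient $\sum_{|\beta|=1}\nabla^\beta\nabla P(t)\cdot\int_0^\infty\int(-y)^\beta R\,dy\,ds$, while the Taylor residue fits into $o(t^{-3(1-1/q)-2})$ thanks to the finite first moment of $R$.

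The remaining principal piece $M^2\int_0^{t/2}\nabla P(t-s)*(P\nabla(-\Delta)^{-1}P)(1+s)\,ds+M^2\int_{t/2}^t P(t-s)*\nabla\cdot(P\nabla(-\Delta)^{-1}P)(1+s)\,ds$ must be matched to $M^2(\tilde J(t)+\tilde K(t))+o(t^{-3(1-1/q)-2})$ in $L^q$. Using the self-similarity $(P\nabla(-\Delta)^{-1}P)(s,x)=s^{-5}(P\nabla(-\Delta)^{-1}P)(1,x/s)$ and the rotational invariance of $P$, one has $\int(-y_i)(P\partial_j(-\Delta)^{-1}P)(s,y)\,dy=\frac{\delta_{ij}}{3s}\int(-y)\cdot(P\nabla(-\Delta)^{-1}P)(1,y)\,dy$; inserting the Taylor correction $(y\cdot\nabla)\nabla P(t,x)$ built into \eqref{tJ} contracts indices into $\frac{1}{3}\Delta P(t,x)$ and the time integration $\int_0^{t/2} s^{-1}\,ds$ (truncated near $s=0$ by virtue of the $1{+}s$ shift) produces the factor $\log(1+t/2)$ of $\tilde K(t)$. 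What remains matches $\tilde J(t)$ as defined in \eqref{tJ}. The main obstacle is this identification: cleanly decoupling the logarithm $\tilde K$ from the convergent self-similar piece $\tilde J$, and making the cumulative errors from the shift $1{+}s\leftrightarrow s$, the small-$s$ cutoff, and the Taylor residues all fit uniformly inside the sharp $o(t^{-3(1-1/q)-2})$ threshold for $1\le q\le\infty$, which forces one to combine the endpoint derivative and weight bounds of Propositions \ref{lem-dr} and \ref{lem-wt}.
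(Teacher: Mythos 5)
Your proposal is correct and follows essentially the same route as the paper: Duhamel with the renormalization $u\nabla(-\Delta)^{-1}u(s)=M^2(P\nabla(-\Delta)^{-1}P)(1+s)+R$, Taylor expansion of $\nabla P(t-s,x-y)$ about $(t,x)$ exploiting the vanishing zeroth moment, extraction of $\tilde K$ from the first moment of $(P\nabla(-\Delta)^{-1}P)(1+s)$ via oddness, scaling and the $\frac13\Delta P$ contraction, and control of the $1+s\leftrightarrow s$ shift errors (the paper's $\varrho_4,\varrho_5$). The paper's proof is exactly this decomposition, with the remainders $\varrho_1,\varrho_2,\varrho_3$ handled as in Theorem \ref{th2}.
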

If we try to give the asymptotic expansion for the case $n = 2$ and $\theta = 1$ in the same way as above, then we may see that
$\int_0^\infty \int_{\mathbb{R}^2} |y| |u\nabla(-\Delta)^{-1}u - M^2 P\nabla(-\Delta)^{-1}P| dyds = + \infty$.
To study this case, we define 
\begin{equation}\label{J2}
\begin{split}
	J_2 (t)
	=&
	M\int_0^{t/2}
		\nabla P (t-s)*
		\bigl(
			P \nabla (-\Delta)^{-1} (m\cdot\nabla P)
			+
			(m\cdot\nabla P) \nabla (-\Delta)^{-1} P
		\bigr)
		(s)
	ds\\
	&+
	M \int_{t/2}^t
		P(t-s)*
		\nabla \cdot \bigl(
			P \nabla (-\Delta)^{-1} (m\cdot\nabla P)
			+
			(m\cdot\nabla P) \nabla (-\Delta)^{-1} P
		\bigr)
		(s)
	ds\\
	&+
	M^3 \int_0^{t/2} \int_{\mathbb{R}^2}
		\left( \nabla P (t-s,x-y) + (y\cdot\nabla) \nabla P(t,x) \right)\\
		&\hspace{15mm}\cdot
		\bigl(
			P \nabla (-\Delta)^{-1} J + J \nabla (-\Delta)^{-1} P
		\bigr)
		(s,y)
	dyds\\
	&+
	M^3\int_{t/2}^t
		P(t-s) * \nabla
		\bigl(
				P \nabla (-\Delta)^{-1} J + J \nabla (-\Delta)^{-1} P
		\bigr)
		(s)
	ds,\\
	K(t)
	=&
	\frac12 \Delta P(t) \log (1+\tfrac{t}2) \int_{\mathbb{R}^2}
		(-y) \cdot \left( P\nabla(-\Delta)^{-1}J + J \nabla (-\Delta)^{-1} P \right) (1,y)
	dy.
\end{split}
\end{equation}
Then $J_2$ satisfies
\begin{equation}\label{J2p}
	J_2 \in C \left( (0,\infty), L^1 (\mathbb{R}^2) \cap L^\infty (\mathbb{R}^2) \right),
	\quad
	\bigl\| J_2(t) \bigr\|_{L^q (\mathbb{R}^2)}
	=
	t^{-2 (1-\frac1q) - 2} \bigl\| J_2 (1) \bigr\|_{L^q (\mathbb{R}^2)}
\end{equation}
for $1 \le q \le \infty$ (see Proposition \ref{propJ2} in Section \ref{prf}).
\begin{theorem}\label{th4}
	Let $n = 2,~ \theta = 1,~ u_0 \in L^1 (\mathbb{R}^2,(1+|x|^2)dx) \cap L^\infty (\mathbb{R}^2)$, and $J,~ J_2$ and $K$ be given by \eqref{J} and \eqref{J2}.
	Assume that the solution $u$ satisfies \eqref{7} and \eqref{8}.
	Then
	\[
	\begin{split}
		\biggl\|
			&u(t) - M P (t) - m \cdot \nabla P (t) - M^2 J (t) - M^3 K(t) - J_2 (t)
			- \sum_{|\alpha| = 2} \frac{\nabla^\alpha P (t)}{\alpha!} 
			\int_{\mathbb{R}^2} (-y)^\alpha u_0 (y) dy\\
			&- \sum_{|\beta| = 1} \nabla^\beta \nabla P (t)
			\cdot \int_0^\infty \int_{\mathbb{R}^2}
				(-y)^\beta
				\bigl\{
					u\nabla(-\Delta)^{-1}u (s,y)
					- M^2 P\nabla(-\Delta)^{-1} P (s,y)\\
					&\hspace{15mm}- M\left( P \nabla (-\Delta)^{-1} (m \cdot \nabla P + M^2 J) + (m\cdot\nabla P + M^2 J) \nabla (-\Delta)^{-1} P \right) (1+s,y)
				\bigr\}
			dyds
		\biggr\|_{L^q (\mathbb{R}^2)}\\
		=& o \bigl( t^{-2 (1-\frac1q) - 2} \bigr)
	\end{split}
	\]
	as $t \to \infty$ for $1 \le q \le \infty$, where $M = \int_{\mathbb{R}^2} u_0 (y) dy$ and $m = \int_{\mathbb{R}^2} (-y) u_0 (y) dy$.
\end{theorem}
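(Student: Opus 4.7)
The plan is to extend the moment-expansion strategy used for Theorems~\ref{th1}--\ref{th3} to the critical dimension $n=2$ with $\theta=1$, carefully tracking the two new obstructions specific to this case: an additional borderline bilinear moment that gives rise to the renormalization $J_2$, and a logarithmically divergent moment that forces the correction $K$. Throughout, the starting point is the Duhamel identity \eqref{MS} with $G_\theta=P$.

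For the linear part $P(t)*u_0$, I would invoke the classical second-order moment expansion for the Poisson semigroup: since $u_0\in L^1(\mathbb{R}^2,(1+|y|^2)dy)$,
\[
\bigl\|P(t)*u_0 - MP(t) - m\cdot\nabla P(t) - \sum_{|\alpha|=2}\tfrac{\nabla^\alpha P(t)}{\alpha!}\int_{\mathbb{R}^2}(-y)^\alpha u_0(y)\,dy\bigr\|_{L^q(\mathbb{R}^2)} = o\bigl(t^{-2(1-1/q)-2}\bigr),
\]
which accounts for the first three Gaussian-type terms of the asymptotic.

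The bulk of the work is the bilinear expansion of the nonlinearity. Writing $u=MP+m\cdot\nabla P+M^2J+r$, where $r$ is the second-order remainder supplied by Theorem~\ref{th2} and further controlled via the weighted estimates of Propositions~\ref{lem-dr} and \ref{lem-wt}, and distributing bilinearly gives
\[
u\nabla(-\Delta)^{-1}u = M^2\, P\nabla(-\Delta)^{-1}P + M\Lambda_1 + M^3\Lambda_2 + \rho,
\]
where $\Lambda_1 = P\nabla(-\Delta)^{-1}(m\cdot\nabla P) + (m\cdot\nabla P)\nabla(-\Delta)^{-1}P$, $\Lambda_2 = P\nabla(-\Delta)^{-1}J + J\nabla(-\Delta)^{-1}P$, and $\rho$ collects the faster-decaying quadratic remainders in $m\cdot\nabla P$, $J$ and $r$. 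Inserting this into the two nonlinear Duhamel integrals of \eqref{MS} reproduces, by the very definitions \eqref{J} and \eqref{J2}, the terms $M^2J(t)$ and the four summands composing $J_2(t)$. The remaining contributions $\sum_{|\beta|=1}\nabla^\beta\nabla P(t)\cdot\int_0^\infty\int(-y)^\beta(\cdots)\,dy\,ds$ then emerge from a first-order Taylor expansion of $\nabla P(t-s,x-y)$ in $y$ on the inner time interval $(0,t/2)$, applied to the renormalized density displayed in the statement.

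The main obstacle is producing the logarithmic correction $M^3K(t)$. The weighted density $|y|\Lambda_2(s,y)$ is only borderline integrable in $s$: the scaling identity \eqref{scJ} at $\theta=1$ yields $\|J(s)\|_{L^1}=O(s^{-1})$, and combined with \eqref{decay-pot} one obtains $\int|y|\,|\Lambda_2(s,y)|\,dy=O((1+s)^{-1})$, so its time integral on $(1,t/2)$ grows like $\log(1+t/2)$. Mimicking the device used for $\tilde K$ in \eqref{tJ} and Theorem~\ref{th3}, I would write the inner Duhamel kernel as $\nabla P(t-s,x-y)=\nabla P(t,x)-(y\cdot\nabla)\nabla P(t,x)+(\text{smaller remainder})$, shift $\Lambda_2$ by one unit of time to tame its initial singularity, and isolate the borderline contribution, which matches exactly $\tfrac{M^3}{2}\Delta P(t)\log(1+t/2)\int(-y)\cdot\Lambda_2(1,y)\,dy = M^3K(t)$. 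Once this logarithmic piece has been extracted, the analogue of \eqref{coef2} ensures that the modified first moment appearing in the statement converges absolutely. Combining all remainders with \eqref{7}, \eqref{decay-pot}, the scaling identities \eqref{tJp} and \eqref{J2p}, and the weighted bounds of Propositions~\ref{lem-dr} and \ref{lem-wt} then yields the claimed $o(t^{-2(1-1/q)-2})$ bound uniformly for $1\le q\le\infty$.
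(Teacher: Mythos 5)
Your overall architecture is the one the paper uses: Duhamel's formula, renormalization of the bilinear density $u\nabla(-\Delta)^{-1}u$ by the second-order profile $MP+m\cdot\nabla P+M^2J$, a first-order Taylor expansion of the kernel on $(0,t/2)$ to produce the moment coefficient, and extraction of the logarithmically divergent first moment of $\Lambda_2=P\nabla(-\Delta)^{-1}J+J\nabla(-\Delta)^{-1}P$ into $M^3K(t)$ via the scaling $\int|y||\Lambda_2(1+s,y)|\,dy=O((1+s)^{-1})$ together with the parity identity that reduces $\sum_{|\beta|=1}\nabla^\beta\nabla P\cdot\int(-y)^\beta\Lambda_2\,dy$ to $\tfrac12\Delta P$ times the trace. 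Two points, however, are genuine gaps rather than omitted routine details. First, you control the second-order remainder $r=u-MP-m\cdot\nabla P-M^2J$ by appealing to Theorem \ref{th2}, but that theorem is stated only for $0<\theta<1$ and does not cover $n=2$, $\theta=1$; moreover what the argument actually requires is not an $o$-estimate but quantitative decay of $r$ \emph{and of its Riesz potential} $\nabla(-\Delta)^{-1}r$ in $L^2$, sharp enough that $\int_0^\infty\int|y|\,|u\nabla(-\Delta)^{-1}r|\,dy\,ds$ and the companion terms converge (this is \eqref{coef4}) and that $\rho_1,\rho_2,\rho_3$ are $o(t^{-2(1-1/q)-2})$. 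The paper devotes Propositions \ref{tl4} and \ref{HLSth4} to exactly these bounds (with $(\log)^2$ losses and an $\varepsilon$-regularization near $s=0$), using Kato--Ponce commutator estimates and the earlier Propositions \ref{lem-dr}, \ref{tl2}, \ref{HLSth2}; Propositions \ref{lem-dr} and \ref{lem-wt} alone, which you invoke, do not suffice.

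Second, your sketch does not explain why the cross term $\Lambda_1=P\nabla(-\Delta)^{-1}(m\cdot\nabla P)+(m\cdot\nabla P)\nabla(-\Delta)^{-1}P$ produces no logarithm. By the same scaling count that gives the log from $\Lambda_2$, one has $\int|y|\,|\Lambda_1(s,y)|\,dy=O(s^{-1})$, so a second logarithmic correction would appear unless $\int_{\mathbb{R}^2}(-y)^\beta\Lambda_1(s,y)\,dy=0$ for $|\beta|\le 1$. This vanishing holds by an odd-symmetry argument (it is the content of the paper's Proposition \ref{propJ2}), and it is also what makes the first two summands of $J_2$ decay at the third order via a second-order Taylor expansion of the kernel. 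Without stating and using this cancellation, the claim that the expansion contains only the single logarithmic term $M^3K(t)$, and that $J_2$ satisfies \eqref{J2p}, is unsupported.
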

We confirm that
\begin{equation}\label{coef4}
\begin{split}
	&\int_0^\infty \int_{\mathbb{R}^2}
				(-y)^\beta
				\bigl\{
					u\nabla(-\Delta)^{-1}u (s,y)
					- M^2 P\nabla(-\Delta)^{-1} P (s,y)\\
					&\hspace{5mm}- M\left( P \nabla (-\Delta)^{-1} (m \cdot \nabla P + M^2 J) + (m\cdot\nabla P + M^2 J) \nabla (-\Delta)^{-1} P \right) (1+s,y)
				\bigr\}
			dyds
	\in \mathbb{R}^2
\end{split}
\end{equation}
in Section \ref{prf}.
Theorem \ref{th4} provides the asymptotic expansion with third-order.
Clearly, we see that the asymptotic expansion with second-order contains no logarithmic term.
Now we refer to the following generalized Burgers equation:
\begin{equation}\label{B}
	\left\{
	\begin{array}{lr}
		\partial_t \omega + (-\partial_x^2)^{1/2} \omega + \frac12 \partial_x (\omega^2) = 0,
		&
		t > 0,~ x \in \mathbb{R},\\
		\omega (0,x) = \omega_0 (x),
		&
		x \in \mathbb{R}.
	\end{array}
	\right.
\end{equation}
For \eqref{B}, well-posedness, global existence and decay of solutions for small initial data are proved.
Particularly, for $1 \le q \le \infty$, the decaying solution has the following asymptotic expansion as $t\to\infty$ (see \cite{I,Y-S}):
\begin{equation}\label{asympB}
	\begin{split}
	&\biggl\|
		\omega (t) - M_\omega P(t) + \frac1{4\pi} M_\omega^2 \partial_x P(t) \log (1+\tfrac{t}2) - M_\omega^2 J_\omega (t)\\
	&\hspace{2mm}-
		\biggl( m_\omega - \frac12 \int_0^\infty \int_{\mathbb{R}} \left( \omega(s,y)^2 - M_\omega^2 P(1+s,y)^2 \right) dyds \biggr) \partial_x P(t) \biggr\|_{L^q (\mathbb{R})}
	= o \bigl( t^{-(1-\frac1q)-1} \bigr),
	\end{split}
\end{equation}
where $M_\omega = \int_{\mathbb{R}} \omega_0 (y) dy,~ m_\omega = \int_{\mathbb{R}} (-y) \omega_0 (y) dy$ and
\[
\begin{split}
	J_\omega (t)
	=&
	-\frac12 \int_0^{t/2} \int_{\mathbb{R}}
		\left( \partial_x P(t-s,x-y) - \partial_x P(t,x) \right)
		P(s,y)^2
	dyds\\
	&-
	\int_{t/2}^t
		P (t-s) * (P\partial_x P)(s)
	ds.
\end{split}
\]
This correction term fulfills
\[
	\bigl\| J_\omega (t) \bigr\|_{L^q (\mathbb{R})}
	=
	t^{-(1-\frac1q)-1} \bigl\| J_\omega (1) \bigr\|_{L^q (\mathbb{R})}
\]
for $1 \le q \le \infty$.
The logarithmic term in \eqref{asympB} is derived from the following procedure:
The mild solution of \eqref{B} is given by
\[
	\omega (t) = P(t) * \omega_0 - \frac12 \int_0^{t/2} \partial_x P(t-s) * \omega (s)^2 ds - \int_{t/2}^t P(t-s) * (\omega\partial_x \omega) (s) ds.
\]
In the second term, we renormalize $\omega$ by $M_\omega P$, then we obtain the term
$\frac12 M_\omega^2 \int_0^{t/2}
	\partial_x P(t-s) * P (1+s)^2
ds$.
Taylor's theorem says that the decay rate of this term is given by
\[
\begin{split}
	\frac12 M_\omega^2 \partial_x P(t) \int_0^{t/2} \int_{\mathbb{R}} P (1+s,y)^2 dyds
	=&
	\frac12 M_\omega^2 \partial_x P(t) \int_0^{t/2} (1+s)^{-1} ds \int_{\mathbb{R}} P (1,y)^2 dy\\
	=&
	\frac1{4\pi} M_\omega^2 \partial_x P(t) \log ( 1+\tfrac{t}2 ).
\end{split}
\]
Here we used the relation $P(1+s,y) = (1+s)^{-2} P(1,(1+s)^{-1} y)$.
Similarly, the second-order asymptotic expansion for \eqref{DD} with $n = 2$ and $\theta =1$ contains
\[
\begin{split}
	&M^2 \nabla P(t) \cdot \int_0^{t/2} \int_{\mathbb{R}^2}
		(P\nabla(-\Delta)^{-1}P) (1+s,y)
	dyds\\
	=&
	M^2 \nabla P(t) \log (1+\tfrac{t}2) \cdot \int_{\mathbb{R}^2} (P\nabla(-\Delta)^{-1}P) (1,y) dy,
\end{split}
\]
since $P(1+s,y) = (1+s)^{-3} P(1,(1+s)^{-1}y)$ when $n = 2$.
This fact does not contradict the assertion of Theorem \ref{th4}.
Indeed
\[
	\int_{\mathbb{R}^2} (P\nabla(-\Delta)^{-1}P) (1,y) dy = 0.
\]
Such a vanishing logarithmic term is developed in the studies for some other phenomena (we refer to \cite{H-K-N,H-N,K-RP,KtM,N-Y,N-S_Mono,N-S_DE,Yd}).\\

\noindent
{\bf Notation.}
In this paper, we use the following notation.
For $a = (a_1,\ldots,a_n)$ and $b = (b_1,\ldots,b_n) \in \mathbb{R}^n$, we denote that $a \cdot b = \sum_{j=1}^n a_j b_j$ and $|a| = \sqrt{a\cdot a}$.
We define the Fourier transform and the Fourier inverse transform by $\mathcal{F}[\varphi](\xi) = (2\pi)^{-n/2} \int_{\mathbb{R}^n} e^{-i x\cdot \xi} \varphi (x) dx$ and $\mathcal{F}^{-1} [\varphi] (x) = (2\pi)^{-n/2} \int_{\mathbb{R}^n} e^{ix\cdot \xi} \varphi (\xi) d\xi$, where $i = \sqrt{-1}$.
We denote that $\partial_t = \partial / \partial t,~ \partial_j = \partial / \partial x_j~ (j = 1,\ldots,n),~ \nabla = (\partial_1,\ldots,\partial_n)$ and $\Delta = \sum_{j=1}^n \partial_j^2$.
Particularly $\partial_x = \partial/\partial x$ for $n = 1$, and $\nabla^\bot = (-\partial_2, \partial_1)$ for $n = 2$.
For $\theta > 0,~ (-\Delta)^{\theta/2} \varphi = \mathcal{F}^{-1} [|\xi| \mathcal{F} [\varphi]]$.
For $\alpha = (\alpha_1,\ldots,\alpha_n) \in \mathbb{Z}_+^n = (\mathbb{N} \cup \{ 0 \})^n$, we use $\alpha! = \prod_{j=1}^n \alpha_j !,~ \nabla^\alpha = \prod_{j=1}^n \partial_j^{\alpha_j}$ and $|\alpha| = \sum_{j=1}^n \alpha_j$.
For $1 \le p \le \infty$ and $s \in \mathbb{R},~ L^p (\mathbb{R}^n)$ and $W^{s,p} (\mathbb{R}^n)$ denote the Lebesgue space and the Sobolev space on $\mathbb{R}^n$, respectively.
We abbreviate the norm of $L^p (\mathbb{R}^n)$ by $\| \cdot \|_{L^p (\mathbb{R}^n)}$.
For a nonnegative function $g$, let $L^1 (\mathbb{R}^n, g dx) = \{ \varphi \in L_\mathit{loc}^1 (\mathbb{R}^n)~ |~ \int_{\mathbb{R}^n} |\varphi (x)| g(x) dx < + \infty \}$.
We write the convolution of $f = f(x)$ and $g = g(x)$ by $f*g (x) = \int_{\mathbb{R}^n} f(x-y) g(y) dy$.
The gamma function is provided by $\Gamma (p) = \int_0^\infty e^{-t} t^{p-1} dt$ for $p > 0$.
Various constants are simply denoted by $C$.

\section{Preliminaries}\label{2}
In this section, we prepare several lemmas to use in the proof of our results.
\begin{lemma}[positivity lemma]\label{lemC-C}
	Let $0 \le s \le 2,~ p \ge 1$ and $f \in W^{s,p} (\mathbb{R}^n)$.
	Then
	\[
		\int_{\mathbb{R}^n}
			|f|^{p-2} f (-\Delta)^{s/2} f
		dx
		\ge 0.
	\]
	Particularly, when $p \ge 2$,
	\[
		\int_{\mathbb{R}^n}
			|f|^{p-2} f (-\Delta)^{s/2} f
		dx
		\ge
		\frac2p \int_{\mathbb{R}^n}
			\left| (-\Delta)^{s/4} (|f|^{p/2}) \right|^2
		dx
	\]
	holds.
\end{lemma}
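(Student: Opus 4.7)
The plan is to reduce both inequalities to pointwise estimates via the singular-integral representation of the fractional Laplacian
\[
    (-\Delta)^{s/2} f(x) = c_{n,s} \, \mathrm{P.V.} \int_{\mathbb{R}^n} \frac{f(x) - f(y)}{|x-y|^{n+s}} \, dy,
\]
valid for $0 < s < 2$. Multiplying by $|f|^{p-2} f(x)$, integrating over $x$, and symmetrizing in $(x,y)$ yields
\[
    \int_{\mathbb{R}^n} |f|^{p-2} f \, (-\Delta)^{s/2} f \, dx
    = \frac{c_{n,s}}{2} \int_{\mathbb{R}^n} \int_{\mathbb{R}^n} \frac{(|f(x)|^{p-2} f(x) - |f(y)|^{p-2} f(y))(f(x) - f(y))}{|x-y|^{n+s}} \, dx \, dy.
\]
For the first inequality the integrand is pointwise nonnegative, since $t \mapsto |t|^{p-2} t$ is nondecreasing on $\mathbb{R}$ for every $p \geq 1$.

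For the refined inequality, applying the same identity with $g = |f|^{p/2}$ in place of $f$ gives
\[
    \int_{\mathbb{R}^n} |(-\Delta)^{s/4}(|f|^{p/2})|^2 \, dx
    = \frac{c_{n,s}}{2} \int_{\mathbb{R}^n} \int_{\mathbb{R}^n} \frac{(|f(x)|^{p/2} - |f(y)|^{p/2})^2}{|x-y|^{n+s}} \, dx \, dy,
\]
so everything reduces to the pointwise estimate
\[
    (|a|^{p-2}a - |b|^{p-2}b)(a-b) \geq \tfrac{2}{p}(|a|^{p/2} - |b|^{p/2})^2, \qquad a,b \in \mathbb{R}, \; p \geq 2.
\]
I would verify this in two cases. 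When $a$ and $b$ have the same sign (WLOG $a \geq b \geq 0$), Cauchy--Schwarz applied to $a^{p-1} - b^{p-1} = (p-1)\int_b^a t^{p-2} dt$ and $a - b = \int_b^a dt$ gives $(a^{p-1} - b^{p-1})(a-b) \geq \tfrac{4(p-1)}{p^2}(a^{p/2} - b^{p/2})^2$, and the condition $4(p-1)/p^2 \geq 2/p$ is equivalent to $p \geq 2$. When $a$ and $b$ have opposite signs (say $a > 0 > b$), a direct expansion yields $(a^{p-1} + |b|^{p-1})(a + |b|) \geq a^p + |b|^p \geq (a^{p/2} - |b|^{p/2})^2$, which is stronger than required since $p \geq 2$.

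The endpoint $s = 0$ is immediate, and for $s = 2$ integration by parts gives $\int |f|^{p-2} f (-\Delta f) \, dx = (p-1) \int |f|^{p-2} |\nabla f|^2 \, dx$ and $\int |\nabla(|f|^{p/2})|^2 \, dx = (p/2)^2 \int |f|^{p-2} |\nabla f|^2 \, dx$, so the refined inequality reduces to $p - 1 \geq p/2$. The chief technical obstacle is justifying these formal calculations at the regularity claimed: for $1 \leq p < 2$ the expression $|f|^{p-2} f$ is singular at zeros of $f$, so I would first smooth $f$, replace $|f|^{p-2}f$ by the regularization $(|f|^2 + \varepsilon)^{(p-2)/2} f$, verify the two identities for the regularized version (where the principal-value integrals converge absolutely), and then pass to the limit $\varepsilon \to 0^+$ combined with a density argument in $W^{s,p}(\mathbb{R}^n)$.
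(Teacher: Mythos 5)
The paper does not prove this lemma itself but defers to \cite{Crdb-Crdb,J05}, and your argument --- symmetrizing the singular-integral representation of $(-\Delta)^{s/2}$ into a double integral and reducing both inequalities to the pointwise estimate $(|a|^{p-2}a-|b|^{p-2}b)(a-b)\ge \tfrac2p\,(|a|^{p/2}-|b|^{p/2})^2$ --- is precisely the strategy of those references. Your verification of that pointwise inequality (Cauchy--Schwarz in the same-sign case, yielding the constant $4(p-1)/p^2\ge 2/p$ exactly when $p\ge2$, and the direct expansion in the opposite-sign case), together with your treatment of the endpoints $s=0$ and $s=2$ and the regularization/density plan for low regularity, is correct, so the proposal is sound.
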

For the proof of this lemma, see \cite{Crdb-Crdb,J05}.
We also need some inequalities of Sobolev type.
\begin{lemma}[Hardy-Littlewood-Sobolev's inequality \cite{S,Z}]\label{HLS}
	Let $n \ge 2,~ 1 < \sigma < n,~ 1 < p < \frac{n}{\sigma}$ and $\frac1{p_*} = \frac1p - \frac{\sigma}n$.
	Then there exists a positive constant $C$ such that
	\[
		\bigl\| (-\Delta)^{-\sigma/2} \varphi \bigr\|_{L^{p_*} (\mathbb{R}^n)}
		\le
		C \bigl\| \varphi \bigr\|_{L^p (\mathbb{R}^n)}
	\]
	for any $\varphi \in L^p (\mathbb{R}^n)$.
\end{lemma}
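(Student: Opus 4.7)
The plan is to follow Hedberg's classical argument, obtaining the boundedness by a pointwise interpolation between the Hardy--Littlewood maximal function and the $L^p$ norm. First I would rewrite the operator as a Riesz potential: since $\mathcal{F}\bigl[|x|^{-(n-\sigma)}\bigr](\xi)$ is a positive multiple of $|\xi|^{-\sigma}$, the Fourier definition of $(-\Delta)^{-\sigma/2}$ yields
\[
    (-\Delta)^{-\sigma/2} \varphi(x) = c_{n,\sigma} \int_{\mathbb{R}^n} |x-y|^{-(n-\sigma)} \varphi(y)\,dy
\]
for a positive constant $c_{n,\sigma}$.

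Next I would split the integral at a free radius $R>0$. Using the dyadic decomposition into annuli $\{2^{-k-1} R \le |x-y| < 2^{-k}R\}$ and the definition of the Hardy--Littlewood maximal operator $M$, the near part is controlled by
\[
    \int_{|x-y|<R} |x-y|^{-(n-\sigma)} |\varphi(y)|\,dy \le C R^{\sigma} (M\varphi)(x),
\]
while H\"older's inequality with exponents $p$ and $p'$ gives
\[
    \int_{|x-y|\ge R} |x-y|^{-(n-\sigma)} |\varphi(y)|\,dy \le C R^{\sigma - n/p} \|\varphi\|_{L^p(\mathbb{R}^n)}.
\]
The $L^{p'}$ integrability of $|x-y|^{-(n-\sigma)}$ on $\{|x-y|\ge R\}$ is ensured precisely by the hypothesis $p < n/\sigma$, which is equivalent to $(n-\sigma)p' > n$.

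Choosing $R$ so that the two contributions balance, namely $R = \bigl(\|\varphi\|_{L^p(\mathbb{R}^n)}/(M\varphi)(x)\bigr)^{p/n}$, yields the pointwise bound
\[
    \bigl|(-\Delta)^{-\sigma/2} \varphi(x)\bigr| \le C\, (M\varphi)(x)^{1-\sigma p/n}\, \|\varphi\|_{L^p(\mathbb{R}^n)}^{\sigma p/n}.
\]
Since $1/p_* = 1/p - \sigma/n$ gives $1-\sigma p/n = p/p_*$, taking the $L^{p_*}$ norm in $x$ and invoking the boundedness of $M$ on $L^p(\mathbb{R}^n)$ (valid because $p>1$) closes the argument.

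The result is classical and contains no serious obstacle; the main points to verify carefully are that the two hypotheses $1<p$ and $p<n/\sigma$ are used at different stages (the latter for integrability of the tail kernel and to make $\sigma-n/p<0$, the former for the maximal inequality), and that the interpolation exponent matches $p/p_*$ exactly so that Hedberg's inequality indeed transports the $L^p$ bound to $L^{p_*}$.
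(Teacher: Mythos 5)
Your argument is correct and complete: the reduction to the Riesz potential, the two-scale splitting with the dyadic estimate $CR^{\sigma}(M\varphi)(x)$ for the near part and the H\"older tail bound $CR^{\sigma-n/p}\|\varphi\|_{L^p}$ (which is exactly where $p<n/\sigma$ enters), the optimization in $R$ giving Hedberg's pointwise inequality, and the final application of the maximal theorem (where $p>1$ enters) all check out, including the exponent bookkeeping $1-\sigma p/n=p/p_*$. The paper gives no proof of this lemma and simply cites Stein and Ziemer, and the argument you present is precisely the classical one found there, so there is nothing to reconcile; the only cosmetic remark is that one should dispose of the degenerate cases $(M\varphi)(x)=0$ or $=\infty$ before choosing $R$, which is routine.
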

\begin{lemma}[Gagliardo-Nirenberg inequality \cite{H-Y-Z,K-PP,M-N-S-S}]\label{GN}
	Let $n \ge 1,~ 0 < \sigma < s < n,~ 1 < p_1, p_2 < \infty$ and $\frac1p = (1-\frac\sigma{s}) \frac1{p_1} + \frac\sigma{s} \frac1{p_2}$.
	Then
	\[
		\bigl\| (-\Delta)^{\sigma/2} \varphi \bigr\|_{L^p (\mathbb{R}^n)}
		\le
		C \bigl\| \varphi \bigr\|_{L^{p_1} (\mathbb{R}^n)}^{1-\frac\sigma{s}}
		\bigl\| (-\Delta)^{s/2} \varphi \bigr\|_{L^{p_2} (\mathbb{R}^n)}^{\frac\sigma{s}}
	\]
	holds.
\end{lemma}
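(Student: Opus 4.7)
The approach is a dyadic Littlewood--Paley decomposition of $\varphi$ combined with Bernstein's inequality and optimization of a frequency cutoff, using that $(-\Delta)^{\sigma/2}$ acts, modulo bounded multipliers, as multiplication by $2^{j\sigma}$ on the frequency annulus $|\xi|\sim 2^j$.

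First, I fix a standard smooth dyadic partition $1=\sum_j\psi_j$ with $\psi_j$ supported on $|\xi|\sim 2^j$, set $P_j=\mathcal{F}^{-1}\psi_j\mathcal{F}$, and note that the H\"ormander--Mikhlin theorem gives $\|(-\Delta)^{\sigma/2}P_j\varphi\|_{L^r}\lesssim 2^{j\sigma}\|\widetilde{P}_j\varphi\|_{L^r}$ for every $1<r<\infty$, with $\widetilde{P}_j$ a slightly fattened projection. Combining this with Bernstein's inequality $\|\widetilde{P}_jf\|_{L^b}\lesssim 2^{jn(1/a-1/b)}\|\widetilde{P}_jf\|_{L^a}$ valid for $1\le a\le b\le\infty$, and assuming without loss of generality $p_1\le p_2$ so that $p_1\le p\le p_2$, I obtain two scale-localized bounds
\begin{align*}
\|(-\Delta)^{\sigma/2}P_j\varphi\|_{L^p}&\lesssim 2^{j(\sigma+n/p_1-n/p)}\|\varphi\|_{L^{p_1}},\\
\|(-\Delta)^{\sigma/2}P_j\varphi\|_{L^p}&\lesssim 2^{j(-(s-\sigma)+n(1/p_2-1/p))}\|(-\Delta)^{s/2}\varphi\|_{L^{p_2}},
\end{align*}
the second bound obtained after rewriting $(-\Delta)^{\sigma/2}P_j=2^{-j(s-\sigma)}m_jP_j(-\Delta)^{s/2}$ with $m_j$ a uniformly bounded multiplier. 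The identity $1/p_1-1/p=(\sigma/s)(1/p_1-1/p_2)$ extracted from the hypothesis on $p$ makes these two exponents in $j$ of opposite sign, so summing over $j\le N$ with the first bound and $j>N$ with the second, and optimizing $N$ in the ratio $\|\varphi\|_{L^{p_1}}/\|(-\Delta)^{s/2}\varphi\|_{L^{p_2}}$, produces the interpolated bound $\|\varphi\|_{L^{p_1}}^{1-\sigma/s}\|(-\Delta)^{s/2}\varphi\|_{L^{p_2}}^{\sigma/s}$ directly.

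The main obstacle is that the second dyadic bound tacitly uses Bernstein in the unavailable direction $L^{p_2}\to L^p$ when $p<p_2$. To circumvent this I would pass through the Fefferman--Stein square-function characterization $\|f\|_{L^p}\sim\|(\sum_j|P_jf|^2)^{1/2}\|_{L^p}$ valid for $1<p<\infty$, applied to $f=(-\Delta)^{\sigma/2}\varphi$; this reduces matters to an $\ell^2$-valued estimate in which the two dyadic bounds combine via $\ell^2$ rather than $\ell^1$ summation and vector-valued multiplier theorems. The hypotheses $1<p_1,p_2<\infty$ and $0<\sigma<s<n$ ensure every multiplier and square-function theorem in play actually applies, and the argument then closes along the same lines as the cited references.
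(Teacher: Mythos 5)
This lemma is quoted in the paper from \cite{H-Y-Z,K-PP,M-N-S-S} and no proof of it appears in the text, so the only question is whether your argument is self-contained; it is not quite. Your first, $\ell^1$-summation argument works only when $p_1=p_2=p$: as you yourself observe, whenever $p_1\ne p_2$ exactly one of the two frequency-localized bounds requires Bernstein's inequality in the direction from a larger to a smaller Lebesgue exponent, which is unavailable. (Incidentally, ``without loss of generality $p_1\le p_2$'' is not a genuine reduction, since the hypotheses are not symmetric in $p_1$ and $p_2$; but whichever ordering holds, one of the two scale-localized bounds fails, so the obstruction is the same.) The real gap is in the proposed repair. Passing to the Fefferman--Stein square function does not let you ``combine the two dyadic bounds via $\ell^2$ rather than $\ell^1$ summation'': the obstruction is not the mode of summation over $j$ but the impossibility of changing Lebesgue exponents in the wrong direction at a fixed frequency scale, and no vector-valued multiplier theorem removes that.

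The argument that actually closes is different at its key step. Writing $S_\sigma\varphi=(\sum_j 2^{2j\sigma}|P_j\varphi|^2)^{1/2}$, one first proves the \emph{pointwise} inequality $S_\sigma\varphi(x)\le C\,(\sup_j|P_j\varphi(x)|)^{1-\sigma/s}\,(S_s\varphi(x))^{\sigma/s}$ by splitting the $j$-sum at a point $N=N(x)$ depending on $x$ and optimizing --- the same optimization you perform, but carried out pointwise rather than after taking norms. Then H\"older's inequality in $x$ with the exponent relation $\frac1p=(1-\frac\sigma{s})\frac1{p_1}+\frac\sigma{s}\frac1{p_2}$, the bound $\sup_j|P_j\varphi|\le C M\varphi$ with $M$ the Hardy--Littlewood maximal operator (this is where $1<p_1$ is genuinely used), and the square-function characterizations $\|S_\sigma\varphi\|_{L^p}\sim\|(-\Delta)^{\sigma/2}\varphi\|_{L^p}$ and $\|S_s\varphi\|_{L^{p_2}}\sim\|(-\Delta)^{s/2}\varphi\|_{L^{p_2}}$ for $1<p,p_2<\infty$ finish the proof, with no ordering of $p_1$ and $p_2$ ever needed. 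Your sketch contains neither the $x$-dependent pointwise interpolation nor the maximal-function step, and these are precisely what replaces the unavailable Bernstein inequality; as written, the argument does not close.
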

The following estimate is due to \cite{K-P}.
\begin{lemma}[Kato-Ponce's commutator estimates \cite{J04,K-P}]\label{cmm}
	Let $s > 0$ and $1 < p < \infty$.
	Then
	\[
		\bigl\| [(-\Delta)^{s/2}, g] f \bigr\|_{L^p (\mathbb{R}^n)}
		\le
		C \bigl( \| \nabla g \|_{L^{p_1} (\mathbb{R}^n)} \| (-\Delta)^{(s-1)/2} f \|_{L^{p_2} (\mathbb{R}^n)}
		+ \| (-\Delta)^{s/2} g \|_{L^{p_3} (\mathbb{R}^n)} \| f \|_{L^{p_4} (\mathbb{R}^n)} \bigr)
	\]
	and
	\[
		\bigl\| (-\Delta)^{s/2} (fg) \bigr\|_{L^p (\mathbb{R}^n)}
		\le
		C \bigl( \| f \|_{L^{p_1} (\mathbb{R}^n)} \| (-\Delta)^{s/2} g \|_{L^{p_2} (\mathbb{R}^n)}
		+
		\| (-\Delta)^{s/2} f \|_{L^{p_3} (\mathbb{R}^n)} \| g \|_{L^{p_4} (\mathbb{R}^n)} \bigr)
	\]
	with $1 < p_j \le \infty~ (j = 1,4)$ and $1 < p_j < \infty~ (j = 2,3)$ such that $\frac1p = \frac1{p_1} + \frac1{p_2} = \frac1{p_3} + \frac1{p_4}$.
\end{lemma}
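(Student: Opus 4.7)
The plan is to prove both estimates via Littlewood-Paley decomposition together with Bony's paraproduct formula, which is the standard route to Kato-Ponce type bounds. Let $\{\Delta_j\}_{j\in\mathbb{Z}}$ be the usual Littlewood-Paley projections, with $\Delta_j f$ frequency-localized in the annulus $\{|\xi|\sim 2^j\}$, and set $S_j=\sum_{k<j}\Delta_k$. Decompose the product by
\[
fg = T_f g + T_g f + R(f,g), \quad T_f g = \sum_j S_{j-2}f\cdot\Delta_j g,\quad R(f,g)=\sum_{|j-k|\le 1}\Delta_j f\cdot\Delta_k g.
\]
The key virtue of this decomposition is that the frequency of each summand of $T_f g$ sits in an annulus near $2^j$, so $(-\Delta)^{s/2}$ behaves on it like multiplication by a smooth symbol of size $2^{sj}$.

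For the commutator $[(-\Delta)^{s/2},g]f$, I would split $gf = T_g f + T_f g + R(g,f)$ and commute $(-\Delta)^{s/2}$ through each paraproduct. In the leading piece $T_g f = \sum_j S_{j-2}g\cdot \Delta_j f$, the frequency support of $S_{j-2}g$ is much smaller than that of $\Delta_j f$, so a first-order Taylor expansion of the symbol $|\xi+\eta|^s-|\xi|^s$ around $\eta=0$ produces the symbolic identity $[(-\Delta)^{s/2},S_{j-2}g]\Delta_j f \approx s\,\nabla S_{j-2}g\cdot \nabla(-\Delta)^{(s-2)/2}\Delta_j f$ up to a remainder of the same form, giving the crucial "gain of one derivative" on $g$ at the cost of $(-\Delta)^{(s-1)/2}$ on $f$. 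The Coifman-Meyer multiplier theorem applied to the resulting bilinear symbol, combined with the Fefferman-Stein vector-valued maximal inequality and the $L^p$-boundedness of the square function, then gives the first term on the right of the commutator estimate. The symmetric paraproduct $T_f g$ and the resonant piece $R(g,f)$ are handled directly: after applying $(-\Delta)^{s/2}$ one uses Bernstein's inequality to convert the fractional derivative into a factor of $2^{sj}$ on the appropriate piece and a Hölder split $\tfrac1p=\tfrac1{p_3}+\tfrac1{p_4}$, producing the $\|(-\Delta)^{s/2}g\|_{L^{p_3}}\|f\|_{L^{p_4}}$ contribution.

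The fractional Leibniz rule is proved by the same paraproduct decomposition applied to $(-\Delta)^{s/2}(fg)$: on $T_f g$ and $T_g f$ Bernstein plus Hölder with exponent splits $\tfrac1p=\tfrac1{p_1}+\tfrac1{p_2}=\tfrac1{p_3}+\tfrac1{p_4}$ place the $s$ derivatives on the high-frequency factor, while the resonant term $R(f,g)$ is summed via either Cauchy-Schwarz in $j$ or a direct $\ell^1$ bound using that $s>0$. The main obstacle will be the endpoint cases $p_1=\infty$ or $p_4=\infty$, where the Littlewood-Paley square-function characterization of $L^p$ is unavailable; here one replaces the square function argument by a pointwise domination via the Hardy-Littlewood maximal function (as in Muscalu-Schlag) or, equivalently, by embedding $L^\infty\hookrightarrow\mathrm{bmo}$ and invoking the Coifman-Meyer theorem in its $L^\infty$ variant. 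A secondary technical point is the careful verification that the symbol estimates for the remainder in the Taylor expansion above still produce a Coifman-Meyer-class multiplier; this is routine but tedious, and is where most of the quantitative work of \cite{K-P,J04} is actually spent.
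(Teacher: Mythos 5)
This lemma is quoted, not proved, in the paper: the text simply attributes it to Kato--Ponce and Ju (\cite{K-P,J04}), so there is no in-paper argument to compare against. Your paraproduct sketch is the standard modern route to exactly these estimates and is sound in outline: the Bony decomposition, the Taylor expansion of $|\xi+\eta|^s-|\xi|^s$ in the low-high piece to trade one derivative on $f$ for $\nabla g$, Coifman--Meyer for the resulting bilinear symbol, Bernstein plus H\"older on the high-low and resonant pieces with the convergence of the resonant sum coming from $s>0$. Two remarks. First, on the endpoint issue you flag: note that the lemma as stated only allows $p_1,p_4=\infty$ on the \emph{undifferentiated} factors ($\nabla g$ and $f$ in the commutator bound, $f$ and $g$ in the Leibniz bound), while the exponents carrying $(-\Delta)^{s/2}$ or $(-\Delta)^{(s-1)/2}$ are restricted to $(1,\infty)$; this keeps you away from the genuinely hard Grafakos--Oh/Bourgain--Li endpoint, and the $L^\infty$ factor enters only through $\|S_{j-2}f\|_{L^\infty}\le C\|f\|_{L^\infty}$ and maximal-function domination, so your proposed fix is more than what is needed. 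Second, in the commutator you should make explicit that the subtraction of $g\,(-\Delta)^{s/2}f$ also produces the pieces $T_{(-\Delta)^{s/2}f}\,g$ and $R(g,(-\Delta)^{s/2}f)$, which must be absorbed into the $\|(-\Delta)^{s/2}g\|_{L^{p_3}}\|f\|_{L^{p_4}}$ term by moving the $2^{js}$ factor from the low-frequency copy of $f$ onto $\Delta_j g$; your phrase ``handled directly'' glosses over this, though it is routine. As a proof sketch the proposal is acceptable and consistent with the cited sources.
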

The H\"ormander-Mikhlin type inequality (cf. \cite[Theorem 3.1]{S-S}) yields that
\[
	\bigl| \partial_t^m \nabla^\alpha G_\theta (1,x) \bigr|
	\le
	C (1+|x|)^{-n-\theta-\theta m - |\alpha|}
\]
for $m \in \mathbb{Z}_+$ and $\alpha \in \mathbb{Z}_+^n$.
A coupling of this and the scaling property
\[
	\partial_t^m \nabla^\alpha G_\theta (t,x)
	=
	t^{-\frac{n}\theta - m - \frac{|\alpha|}\theta}
	\partial_t^m \nabla^\alpha G_\theta (1,x)
\]
provides the following lemmas.
\begin{lemma}\label{decay-lin}
	Let $n \ge 1,~ \theta > 0,~ m \in \mathbb{Z}_+,~ \alpha \in \mathbb{Z}_+^n$ and $1 \le p \le q \le \infty$.
	Then there exists a positive constant $C$ such that
	\[
		\bigl\| \partial_t^m \nabla^\alpha G_\theta (t) * \varphi \bigr\|_{L^q (\mathbb{R}^n)}
		\le
		C t^{-\frac{n}\theta (\frac1p - \frac1q) - m - \frac{|\alpha|}\theta} \bigl\| \varphi \bigr\|_{L^p (\mathbb{R}^n)}
	\]
	for any $\varphi \in L^p (\mathbb{R}^n)$.
\end{lemma}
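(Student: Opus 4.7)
The plan is to reduce the estimate to the corresponding $L^r$ bound on the kernel $\partial_t^m \nabla^\alpha G_\theta(t)$ via Young's convolution inequality, and then to get that $L^r$ bound from the pointwise Hörmander--Mikhlin decay estimate and the scaling identity stated immediately before the lemma.

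First, I would choose $r\in[1,\infty]$ by $1+\tfrac1q = \tfrac1r+\tfrac1p$; this is admissible since $p\le q$ forces $\tfrac1p-\tfrac1q\ge 0$, equivalently $\tfrac1r\in[0,1]$. Young's inequality then gives
\[
\bigl\| \partial_t^m \nabla^\alpha G_\theta(t)*\varphi \bigr\|_{L^q(\mathbb{R}^n)}
\le
\bigl\| \partial_t^m \nabla^\alpha G_\theta(t) \bigr\|_{L^r(\mathbb{R}^n)}\,\|\varphi\|_{L^p(\mathbb{R}^n)},
\]
so it suffices to show
\[
\bigl\| \partial_t^m \nabla^\alpha G_\theta(t) \bigr\|_{L^r(\mathbb{R}^n)}
\le
C\, t^{-\frac{n}{\theta}(1-\frac1r)-m-\frac{|\alpha|}{\theta}},
\]
since $1-\tfrac1r = \tfrac1p-\tfrac1q$.

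Next, I would invoke the scaling property stated just above the lemma, namely $\partial_t^m\nabla^\alpha G_\theta(t,x) = t^{-n/\theta-m-|\alpha|/\theta}\,\partial_t^m\nabla^\alpha G_\theta(1,t^{-1/\theta}x)$, and change variables $y=t^{-1/\theta}x$ in the $L^r$ integral. This produces
\[
\bigl\| \partial_t^m \nabla^\alpha G_\theta(t) \bigr\|_{L^r(\mathbb{R}^n)}
=
t^{-\frac{n}{\theta}-m-\frac{|\alpha|}{\theta}+\frac{n}{\theta r}}\,
\bigl\| \partial_t^m \nabla^\alpha G_\theta(1) \bigr\|_{L^r(\mathbb{R}^n)},
\]
and the exponent of $t$ simplifies to the desired $-\tfrac{n}{\theta}(1-\tfrac1r)-m-\tfrac{|\alpha|}{\theta}$. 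It remains to see that $\partial_t^m\nabla^\alpha G_\theta(1,\cdot)\in L^r(\mathbb{R}^n)$ for every $r\in[1,\infty]$; but this is immediate from the Hörmander--Mikhlin pointwise bound
\[
\bigl|\partial_t^m\nabla^\alpha G_\theta(1,x)\bigr|
\le C(1+|x|)^{-n-\theta-\theta m-|\alpha|},
\]
whose decay exponent strictly exceeds $n$, hence is integrable to any power $r\ge 1$ and is bounded on $\mathbb{R}^n$.

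There is no real obstacle here: the only places to be careful are the algebraic bookkeeping of the exponent of $t$ after scaling, and checking the Young exponents match across the full range $1\le p\le q\le\infty$ (including the endpoint cases $r=1$ and $r=\infty$, which are handled respectively by the uniform bound and the $L^1$ integrability from the Hörmander--Mikhlin estimate). Everything else is a direct application of standard tools already quoted in this section.
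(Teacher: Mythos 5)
Your proposal is correct and follows exactly the route the paper intends: the lemma is stated there as an immediate consequence of the H\"ormander--Mikhlin pointwise bound on $\partial_t^m\nabla^\alpha G_\theta(1,\cdot)$ together with the scaling identity, combined via Young's convolution inequality. Your bookkeeping of the exponents (including the corrected form of the scaling identity with the argument $t^{-1/\theta}x$) and your treatment of the endpoint cases $r=1,\infty$ are all accurate, so nothing is missing.
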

\begin{lemma}\label{ap-lin}
	Let $N \in \mathbb{Z}_+,~ \varphi \in L^1 (\mathbb{R}^n,(1+|x|)^{N} dx)$ and $1 \le q \le \infty$.
	Then
	\[
		\biggl\| G_\theta (t) * \varphi - \sum_{|\alpha| \le N} \frac{\nabla^\alpha G_\theta (t)}{\alpha!} \int_{\mathbb{R}^n} (-y)^\alpha \varphi (y) dy
		\biggr\|_{L^q (\mathbb{R}^n)}
		=
		o \bigl( t^{-\frac{n}\theta (1-\frac1q) - \frac{N}\theta} \bigr)
	\]
	as $t\to\infty$.
	In addition, if $\varphi \in L^1 (\mathbb{R}^n, (1+|x|^2)^{(N+1)/2} dx)$, then
	\[
		\biggl\| |x|^\mu \biggl( G_\theta (t) * \varphi - \sum_{|\alpha| \le N} \frac{\nabla^\alpha G_\theta (t)}{\alpha!} \int_{\mathbb{R}^n} (-y)^\alpha \varphi (y) dy \biggr)
		\biggr\|_{L^q (\mathbb{R}^n)}
		\le
		C t^{-\frac{n}\theta (1-\frac1q) - \frac{N+1}\theta + \frac\mu\theta}
	\]
	for $0 \le \mu \le N$ and $t>0$.
\end{lemma}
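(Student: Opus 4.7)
The plan is to rewrite the expression inside the norm as the convolution of $\varphi$ with the $N$-th order Taylor remainder of $G_\theta(t,\cdot)$ at the origin:
\[
G_\theta(t)*\varphi(x) - \sum_{|\alpha|\le N}\frac{\nabla^\alpha G_\theta(t,x)}{\alpha!}\int_{\mathbb{R}^n}(-y)^\alpha \varphi(y) dy = \int_{\mathbb{R}^n} \mathcal{R}_N(t,x,y) \varphi(y) dy,
\]
where $\mathcal{R}_N(t,x,y) := G_\theta(t,x-y) - \sum_{|\alpha|\le N}\frac{(-y)^\alpha}{\alpha!}\nabla^\alpha G_\theta(t,x)$. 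Two complementary representations of $\mathcal{R}_N$ coming from Taylor's theorem will be used: the full $(N+1)$-st order integral remainder
\[
\mathcal{R}_N(t,x,y) = (N+1)\sum_{|\alpha|=N+1}\frac{(-y)^\alpha}{\alpha!}\int_0^1 (1-s)^N \nabla^\alpha G_\theta(t,x-sy) ds,
\]
and the form obtained by stopping the expansion one order earlier,
\[
\mathcal{R}_N(t,x,y) = N\sum_{|\alpha|=N}\frac{(-y)^\alpha}{\alpha!}\int_0^1 (1-s)^{N-1}\bigl[\nabla^\alpha G_\theta(t,x-sy) - \nabla^\alpha G_\theta(t,x)\bigr] ds.
\]

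To establish the little-$o$ estimate, I would split the $y$-integration at $|y|=R(t)$, where $R(t)\to\infty$ and $R(t)=o(t^{1/\theta})$ (for example $R(t)=t^{1/(2\theta)}$). On the near region $|y|\le R(t)$, inserting the first formula and applying Minkowski together with Lemma~\ref{decay-lin} for $\nabla^{N+1}G_\theta$ yields a contribution bounded by $C R(t) t^{-(n/\theta)(1-1/q)-(N+1)/\theta}\|(1+|y|)^N \varphi\|_{L^1}$, which is $o(t^{-(n/\theta)(1-1/q)-N/\theta})$ by the choice of $R(t)$. On the far region $|y|>R(t)$, inserting the second formula and applying Lemma~\ref{decay-lin} for $\nabla^N G_\theta$ produces a contribution $\le C t^{-(n/\theta)(1-1/q)-N/\theta}\int_{|y|>R(t)}|y|^N|\varphi(y)| dy$; the tail integral vanishes as $R(t)\to\infty$ by absolute continuity of the $L^1((1+|y|)^N dy)$-integral, giving the required little-$o$ rate.

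For the weighted estimate I would work with the sharp pointwise decay
\[
|\nabla^\alpha G_\theta(t,x)| \le C t^{-n/\theta-|\alpha|/\theta}(1+t^{-1/\theta}|x|)^{-n-\theta-|\alpha|},
\]
obtained by combining the H\"ormander--Mikhlin-type bound quoted before Lemma~\ref{decay-lin} with the scaling identity. In the region $|y|\le |x|/2$, the first remainder formula combined with $|x-sy|\ge |x|/2$ gives the pointwise bound $|\mathcal{R}_N(t,x,y)|\le C|y|^{N+1} t^{-n/\theta-(N+1)/\theta}(1+t^{-1/\theta}|x|)^{-n-\theta-N-1}$, whose $|x|^\mu$-weighted $L^q$-norm is computed directly by the substitution $x=t^{1/\theta}z$ and is of order $t^{-(n/\theta)(1-1/q)-(N+1-\mu)/\theta}$ uniformly in $y$. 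In the complementary region $|y|>|x|/2$ one uses $|x|^\mu\le C|y|^\mu$ to trade the $x$-weight for a $y$-weight, decomposes further according to $|x|\le t^{1/\theta}$ versus $|x|>t^{1/\theta}$ and $|y|\le t^{1/\theta}$ versus $|y|>t^{1/\theta}$, and in each sub-regime chooses between the Taylor remainder formula and the direct triangle bound $|\mathcal{R}_N|\le|G_\theta(t,x-y)|+\sum|y|^{|\alpha|}|\nabla^\alpha G_\theta(t,x)|/\alpha!$, converting the resulting $|y|$-powers into powers of $t^{1/\theta}$ through the moment hypothesis $\varphi\in L^1((1+|y|^2)^{(N+1)/2} dy)$.

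The main obstacle is the weighted estimate. Because $|x|^\mu$ is not translation-invariant, the shift $x\mapsto x-sy$ inside the Taylor remainder cannot be removed by change of variable, and each individual term $\||x|^\mu \nabla^\alpha G_\theta(t)\|_{L^q}$ may even be infinite for small $|\alpha|$ and $q$ close to $1$; it is only the cancellation built into $\mathcal{R}_N$ that produces a finite weighted norm with the correct decay, which is what forces the region-by-region analysis outlined above.
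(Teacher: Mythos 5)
Your argument is correct and is essentially the approach the paper intends: the authors state Lemma~\ref{ap-lin} without proof as a direct consequence of the H\"ormander--Mikhlin pointwise bound $|\nabla^\alpha G_\theta(1,x)|\le C(1+|x|)^{-n-\theta-|\alpha|}$ together with the scaling identity, and your Taylor-remainder representation with the splitting at $|y|=R(t)$, $R(t)=o(t^{1/\theta})$, is precisely the standard realization of that claim --- indeed it is the same decomposition the authors themselves carry out explicitly for the nonlinear remainders $r_{1,1},r_{1,2},r_{1,3}$ in the proof of Theorem~\ref{th1}. Your region-by-region treatment of the weighted bound (in particular the observation that $\||x|^\mu\nabla^\alpha G_\theta(t)\|_{L^q}$ can be infinite for small $|\alpha|$ and $q$ near $1$, so that the cancellation in the remainder must be exploited on $|y|\le|x|/2$ and the moment hypothesis used to convert $|y|$-powers into powers of $t^{1/\theta}$ elsewhere) checks out and correctly identifies the only delicate point.
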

The solution of \eqref{DD} satisfies the following estimate.
\begin{proposition}\label{lem-dr}
	Let $n \ge 2,~ 0 < \theta \le 1$ and $\sigma \ge 0$.
	Assume that the solution $u$ satisfies \eqref{7} and \eqref{8}.
	Then there exist positive constants $C$ and $T$ such that
\[
		\bigl\| (-\Delta)^{\sigma/2} u(t) \bigr\|_{L^2 (\mathbb{R}^n)}
		\le
		C t^{-\frac{n}{2\theta} -\frac\sigma\theta}
\]
	for any $t \ge T$.
\end{proposition}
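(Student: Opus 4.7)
The plan is to close an energy estimate at the $H^\sigma$ level, combining Lemma \ref{lemC-C} (to extract the fractional dissipation), Lemma \ref{cmm} (to redistribute derivatives in the nonlinearity), the decay bounds \eqref{7}, \eqref{decay-pot}, and Lemma \ref{GN} (to interpolate the top-order term against higher Sobolev norms, which are finite by \eqref{8}). The base case $\sigma=0$ is exactly \eqref{7} with $p=2$, so the task is to propagate the optimal decay rate to arbitrary $\sigma\ge 0$ by induction (in small increments, say of size $\theta/2$).

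First I would apply $(-\Delta)^{\sigma/2}$ to the first equation of \eqref{DD} and pair it with $(-\Delta)^{\sigma/2}u$ in $L^2$. Lemma \ref{lemC-C} applied to $(-\Delta)^{\sigma/2} u$ with $p=2$ and $s=\theta$ produces the coercive dissipation, yielding the energy identity
\[
\tfrac12 \tfrac{d}{dt}\bigl\|(-\Delta)^{\sigma/2}u\bigr\|_{L^2}^2
+ \bigl\|(-\Delta)^{\sigma/2 + \theta/4}u\bigr\|_{L^2}^2
= -\int_{\mathbb{R}^n} \nabla(-\Delta)^{\sigma/2}u\cdot(-\Delta)^{\sigma/2}(u\nabla\psi)\,dx,
\]
after integrating by parts the divergence in the nonlinear term. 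I would then estimate the last factor by Lemma \ref{cmm} with $(p_1,p_2,p_3,p_4)=(\infty,2,2,\infty)$, rewriting $\|(-\Delta)^{\sigma/2}\nabla\psi\|_{L^2}$ in terms of a lower-order norm of $u$ via $-\Delta\psi=u$ (and Lemma \ref{HLS} when $\sigma<1$), and inserting $\|u\|_{L^\infty}\le C(1+t)^{-n/\theta}$ from \eqref{7} and $\|\nabla\psi\|_{L^\infty}\le C(1+t)^{-(n-1)/\theta}$ from \eqref{decay-pot}. At this stage the nonlinear term is bounded by
\[
\bigl\|(-\Delta)^{(\sigma+1)/2}u\bigr\|_{L^2}
\Bigl(Ct^{-n/\theta}\bigl\|(-\Delta)^{(\sigma-1)/2}u\bigr\|_{L^2}
+ Ct^{-(n-1)/\theta}\bigl\|(-\Delta)^{\sigma/2}u\bigr\|_{L^2}\Bigr).
\]

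The main obstacle is the factor $\|(-\Delta)^{(\sigma+1)/2}u\|_{L^2}$: it carries $\sigma+1$ derivatives whereas the dissipation only supplies $\sigma+\theta/2$, and for $\theta\le 1$ this gap is strictly positive, which is exactly the reason the Duhamel approach breaks down in the regime of this paper. I would resolve this by interpolating via Lemma \ref{GN} between the dissipation $\|(-\Delta)^{\sigma/2+\theta/4}u\|_{L^2}$ and $\|(-\Delta)^{s/2}u\|_{L^2}$ for some $s>\sigma+1$. Assumption \eqref{8} guarantees these higher-regularity norms are finite at each positive time, and an induction in $\sigma$ using this same proposition at the higher level $s$ (bootstrapped from a sufficiently large $s_*$ and then descended) shows they obey at worst a polynomial bound compatible with absorption through Young's inequality.

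Combining everything produces a time-weighted differential inequality of the form
\[
\tfrac{d}{dt}\bigl\|(-\Delta)^{\sigma/2}u\bigr\|_{L^2}^2
+ c\, t^{-\gamma}\bigl\|(-\Delta)^{\sigma/2}u\bigr\|_{L^2}^2
\le C\, t^{-n/\theta - 2\sigma/\theta - 1},
\]
where a further application of Lemma \ref{GN} between $\|u\|_{L^2}$ and the dissipation term converts $\|(-\Delta)^{\sigma/2+\theta/4}u\|_{L^2}^2$ into $c\, t^{-\gamma}\|(-\Delta)^{\sigma/2}u\|_{L^2}^2$ using the base-case $L^2$ decay. Gronwall's inequality then delivers $\|(-\Delta)^{\sigma/2}u(t)\|_{L^2}\le C t^{-n/(2\theta)-\sigma/\theta}$ for all $t$ past some threshold $T$, which is the assertion. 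The delicate step throughout is the interplay between Kato--Ponce (redistributing derivatives between $u$ and $\nabla\psi$) and Gagliardo--Nirenberg (trading excess derivatives for decay), both required specifically because of $\theta\le 1$.
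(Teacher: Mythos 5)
There is a genuine gap at the heart of your treatment of the nonlinear term, and it is exactly the point where the paper's proof does something different. After integrating by parts you estimate $\|(-\Delta)^{\sigma/2}(u\nabla\psi)\|_{L^2}$ by the fractional Leibniz rule (the second inequality of Lemma \ref{cmm}), which inevitably leaves the factor $\bigl\|(-\Delta)^{(\sigma+1)/2}u\bigr\|_{L^2}$ paired against a dissipation that only controls $\bigl\|(-\Delta)^{\sigma/2+\theta/4}u\bigr\|_{L^2}$; since $\theta\le1$ this loses $1-\theta/2>0$ derivatives. Your proposed repair --- interpolating $\bigl\|(-\Delta)^{(\sigma+1)/2}u\bigr\|_{L^2}$ between the dissipation and $\bigl\|(-\Delta)^{s/2}u\bigr\|_{L^2}$ with $s>\sigma+1$, the latter to be supplied ``by induction in $\sigma$'' --- is circular: every step of an induction in increments of $\theta/2$ requires quantitative control of a norm of order $\sigma+1$, which lies strictly above both the inductive hypothesis and the level being proved (because $1>\theta/2$), and \eqref{8} only gives finiteness of the $H^s$ norms at each fixed time, not a uniform or polynomial-in-$t$ bound; producing such a bound is essentially the content of the proposition itself. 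So the absorption step cannot be justified as written.

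The paper closes the estimate without ever creating a term carrying more than $\sigma$ derivatives of $u$ (beyond the dissipation). Instead of the Leibniz rule it writes $\nabla(-\Delta)^{\sigma/2}\cdot(u\nabla\psi)=\nabla(-\Delta)^{\sigma/2}u\cdot\nabla\psi+[\nabla(-\Delta)^{\sigma/2}\cdot\,,\nabla\psi]u$ and exploits the transport structure of the leading term:
\[
	\int_{\mathbb{R}^n}(-\Delta)^{\sigma/2}u\,\nabla(-\Delta)^{\sigma/2}u\cdot\nabla\psi\,dx
	=-\frac12\int_{\mathbb{R}^n}\bigl|(-\Delta)^{\sigma/2}u\bigr|^2\Delta\psi\,dx
	=\frac12\int_{\mathbb{R}^n}u\bigl|(-\Delta)^{\sigma/2}u\bigr|^2dx,
\]
using $-\Delta\psi=u$, so the would-be top-order contribution collapses to a cubic term with only $\sigma$ derivatives on $u$, which is absorbed by the dissipation via the Sobolev embedding $\|(-\Delta)^{\sigma/2}u\|_{L^\rho}\le C\|(-\Delta)^{\sigma/2+\theta/4}u\|_{L^2}$ and the decay of $\|u\|_{L^{n/\theta}}$. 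The commutator remainder is then handled by the first (commutator) inequality of Lemma \ref{cmm}, in which at most $\sigma$ derivatives fall on $u$ because $\nabla\psi=\nabla(-\Delta)^{-1}u$ gains one derivative. Your time-weight and Gagliardo--Nirenberg bookkeeping for the remaining linear term is fine and matches the paper's use of $t^q\|(-\Delta)^{\sigma/2}u\|_{L^2}^2$ with $q>\frac{n}\theta+\frac{2\sigma}\theta$, but without the commutator cancellation the energy inequality does not close.
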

\begin{proof}
Let $q > \frac{n}\theta + \frac{2\sigma}\theta$.
Using \eqref{DD}, we see that
\[
\begin{split}
	&\frac12 \frac{d}{dt} \left( t^q \| (-\Delta)^{\sigma/2} u(t) \|_{L^2 (\mathbb{R}^n)}^2 \right)
	+
	t^q \| (-\Delta)^{\frac\sigma2 + \frac\theta4} u (t) \|_{L^2 (\mathbb{R}^n)}^2\\
	=&
	t^q \int_{\mathbb{R}^n}
		(-\Delta)^{\sigma/2} u \nabla (-\Delta)^{\sigma/2} \cdot (u\nabla\psi)
	dx
	+\frac{q}2 t^{q-1} \| (-\Delta)^{\sigma/2} u(t) \|_{L^2 (\mathbb{R}^n)}^2.
\end{split}
\]
Since
\[
\begin{split}
	&\int_{\mathbb{R}^n}
		(-\Delta)^{\sigma/2} u \nabla (-\Delta)^{\sigma/2} \cdot (u\nabla\psi)
	dx\\
	=&
	\int_{\mathbb{R}^n}
		(-\Delta)^{\sigma/2} u \nabla (-\Delta)^{\sigma/2} u \cdot \nabla \psi
	dx
	+
	\int_{\mathbb{R}^n}
		(-\Delta)^{\sigma/2} u
		\bigl[ \nabla(-\Delta)^{\sigma/2}, \nabla\psi \bigr] u
	dx\\
	=&
	\frac12 \int_{\mathbb{R}^n}
		u \bigl| (-\Delta)^{\sigma/2} u \bigr|^2
	dx
	+
	\int_{\mathbb{R}^n}
		(-\Delta)^{\sigma/2} u
		\bigl[ \nabla(-\Delta)^{\sigma/2}, \nabla\psi \bigr] u
	dx,
\end{split}
\]
we have that
\begin{equation}\label{bs}
\begin{split}
	&\frac12 \frac{d}{dt} \left( t^q \| (-\Delta)^{\sigma/2} u(t) \|_{L^2 (\mathbb{R}^n)}^2 \right)
	+
	t^q \| (-\Delta)^{\frac\sigma2 + \frac\theta4} u (t) \|_{L^2 (\mathbb{R}^n)}^2\\
	=&
	\frac12 t^q \int_{\mathbb{R}^n}
		u \left| (-\Delta)^{\sigma/2} u \right|^2
	dx
	+
	t^q \int_{\mathbb{R}^n}
		(-\Delta)^{\sigma/2} u
		\bigl[ \nabla(-\Delta)^{\sigma/2}, \nabla\psi \bigr] u
	dx\\
	&+
	\frac{q}2 t^{q-1} \| (-\Delta)^{\sigma/2} u(t) \|_{L^2 (\mathbb{R}^n)}^2.
\end{split}
\end{equation}
Let $\frac1\rho = \frac12 - \frac\theta{2n}$, then, from \eqref{7}, we see that
\[
\begin{split}
	\int_{\mathbb{R}^n}
		u \left| (-\Delta)^{\sigma/2} u \right|^2
	dx
	\le&
	\bigl\| u \bigr\|_{L^{\rho/(\rho-2)} (\mathbb{R}^n)} \bigl\| (-\Delta)^{\sigma/2} u \bigr\|_{L^\rho (\mathbb{R}^n)}^2
	\le
	C t^{-\frac{n}\theta (1-\frac\theta{n})} \bigl\| (-\Delta)^{\frac\sigma{2} + \frac\theta{4}} u \bigr\|_{L^2 (\mathbb{R}^n)}^2\\
	\le&
	\frac14\bigl\| (-\Delta)^{\frac\sigma{2} + \frac\theta{4}} u \bigr\|_{L^2 (\mathbb{R}^n)}^2
\end{split}
\]
for sufficiently large $t$.
The H\"older inequality yields that
\[
	\int_{\mathbb{R}^n}
		(-\Delta)^{\sigma/2} u
		\bigl[ \nabla(-\Delta)^{\sigma/2}, \nabla\psi \bigr] u
	dx
	\le
	\bigl\| (-\Delta)^{\sigma/2} u \bigr\|_{L^\rho (\mathbb{R}^n)}
	\bigl\| \bigl[ \nabla(-\Delta)^{\sigma/2}, \nabla\psi \bigr] u \bigr\|_{L^{\rho'} (\mathbb{R}^n)},
\]
where $\frac1\rho = \frac12 - \frac\theta{2n}$ and $\frac1{\rho'} = \frac12 + \frac\theta{2n}$.
Using Lemma \ref{cmm} and \eqref{7}, we see that
\[
\begin{split}
	&\bigl\|
		\bigl[ \nabla(-\Delta)^{\sigma/2}, \nabla\psi \bigr] u
	\bigr\|_{L^{\rho'} (\mathbb{R}^n)}\\
	\le&
	C \left( \| (-\Delta)^{\frac{\sigma+1}2} \nabla \psi \|_{L^\rho (\mathbb{R}^n)} \| u \|_{L^{n/\theta} (\mathbb{R}^n)}
	+
	\| \nabla^2\psi \|_{L^{n/\theta} (\mathbb{R}^n)} \| (-\Delta)^{\sigma/2} u \|_{L^\rho (\mathbb{R}^n)} \right)\\
	\le&
	C \| u \|_{L^{n/\theta} (\mathbb{R}^n)} \| (-\Delta)^{\sigma/2} u \|_{L^\rho (\mathbb{R}^n)}
	\le
	C (1+t)^{-\frac{n}\theta+1} \| (-\Delta)^{\sigma/2} u \|_{L^\rho (\mathbb{R}^n)}.
\end{split}
\]
The Sobolev inequality says that
\[
	\| (-\Delta)^{\sigma/2} u \|_{L^\rho (\mathbb{R}^n)}
	\le
	C \| (-\Delta)^{\frac\sigma{2} + \frac\theta{4}} u \|_{L^2 (\mathbb{R}^n)}.
\]
Thus we have that
\[
\begin{split}
	\int_{\mathbb{R}^n}
		(-\Delta)^{\sigma/2} u
		\bigl[ \nabla(-\Delta)^{\sigma/2}, \nabla\psi \bigr] u
	dx
	\le&
	C (1+t)^{-\frac{n}\theta+1} \| (-\Delta)^{\frac\sigma{2}+\frac\theta{4}} u \|_{L^2 (\mathbb{R}^n)}^2\\
	\le&
	\frac18 \| (-\Delta)^{\frac\sigma{2}+\frac\theta{4}} u \|_{L^2 (\mathbb{R}^n)}^2
\end{split}
\]
for sufficiently large $t$.
The third term on the right-hand side of \eqref{bs} is treated by Lemma \ref{GN}.
Namely, for $\lambda = \frac{2\sigma}{2\sigma + \theta}$, we see that
\[
\begin{split}
	t^{q-1} \bigl\| (-\Delta)^{\sigma/2} u (t) \bigr\|_{L^2 (\mathbb{R}^n)}^2
	\le&
	t^{q-1} \bigl\| u(t) \bigr\|_{L^2 (\mathbb{R}^n)}^{2(1-\lambda)}
	\bigl\| (-\Delta)^{\frac\sigma{2} + \frac\theta{4}} u (t) \bigr\|_{L^2 (\mathbb{R}^n)}^{2\lambda}\\
	\le&
	C t^{q-1-\frac{n}\theta-\frac{2\sigma}\theta} \| u(t) \|_{L^2 (\mathbb{R}^n)}^2
	+
	\frac18 t^q \bigl\| (-\Delta)^{\frac\sigma{2} + \frac\theta{4}} u (t) \bigr\|_{L^2 (\mathbb{R}^n)}^2.
\end{split}
\]
Therefore we obtain that
\[
	\frac{d}{dt} \left(
		t^q \bigl\| (-\Delta)^{\sigma/2} u(t) \bigr\|_{L^2 (\mathbb{R}^n)}^2
	\right)
	+
	t^q \bigl\| (-\Delta)^{\frac\sigma{2} + \frac\theta{4}} u(t) \bigr\|_{L^2 (\mathbb{R}^n)}^2
	\le
	C t^{q-1-\frac{n}\theta-\frac{2\sigma}\theta}
\]
for large $t$.
If we choose sufficiently large $T$, then, we conclude that
\[
\begin{split}
	&t^q \bigl\| (-\Delta)^{\sigma/2} u(t) \bigr\|_{L^2 (\mathbb{R}^n)}^2
	+
	\int_T^t
		s^q \bigl\| (-\Delta)^{\frac\sigma{2} + \frac\theta{4}} u(s)\bigr\|_{L^2 (\mathbb{R}^n)}^2
	ds\\
	\le&
	T^q \bigl\| (-\Delta)^{\sigma/2} u(T) \bigr\|_{L^2 (\mathbb{R}^n)}^2
	+
	C \int_T^t s^{q-1-\frac{n}\theta-\frac{2\sigma}\theta} ds
\end{split}
\]
for $t \ge T$, and complete the proof.
\end{proof}
The decay of the conservation force field $\nabla \psi$ is given in the following.
\begin{proposition}\label{HLSinfty}
	Upon \eqref{7}, $\nabla \psi = \nabla (-\Delta)^{-1} u$ on \eqref{DD} fulfills \eqref{decay-pot}
	for $\frac{n}{n-1} < p \le \infty$.
\end{proposition}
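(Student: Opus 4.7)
The plan is to represent $\nabla\psi$ explicitly via the Newtonian/Riesz kernel: since $-\Delta\psi = u$ with $u \in L^1 \cap L^\infty$ decaying at infinity, we have
\[
	\nabla\psi(x) = c_n \int_{\mathbb{R}^n} \frac{x-y}{|x-y|^n} u(y)\, dy,
\]
so that the pointwise bound
\[
	|\nabla\psi(x)| \le C \int_{\mathbb{R}^n} |x-y|^{-(n-1)} |u(y)|\, dy = C (-\Delta)^{-1/2} |u|(x)
\]
holds. This reduces the problem to estimating a Riesz potential of order $1$ applied to $u$, and then the desired decay will be read off from \eqref{7} by choosing the input $L^r$-norm appropriately.

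For $\frac{n}{n-1} < p < \infty$, I would invoke Hardy--Littlewood--Sobolev (Lemma \ref{HLS}) with $\sigma=1$ and target exponent $p_*=p$. Setting $\frac1r = \frac1p + \frac1n$, the constraint $1 < r < n$ is equivalent to $\frac{n}{n-1} < p < \infty$, and the lemma yields $\|\nabla\psi\|_{L^p} \le C \|u\|_{L^r}$. Substituting the decay bound in \eqref{7} at the exponent $r$ gives
\[
	\bigl\|\nabla\psi(t)\bigr\|_{L^p(\mathbb{R}^n)}
	\le C(1+t)^{-\frac{n}{\theta}(1-\frac{1}{r})}
	= C(1+t)^{-\frac{n}{\theta}(1-\frac{1}{p}) + \frac{1}{\theta}},
\]
which is precisely \eqref{decay-pot} in that range.

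For the endpoint $p=\infty$, HLS is unavailable, so I would split the convolution integral at a free scale $R>0$:
\[
	|\nabla\psi(x)|
	\le C\!\int_{|x-y|<R}\!|x-y|^{-(n-1)}|u(y)|\,dy
	+ C\!\int_{|x-y|\ge R}\!|x-y|^{-(n-1)}|u(y)|\,dy
	\le CR\|u\|_{L^\infty} + CR^{-(n-1)}\|u\|_{L^1}.
\]
Optimizing by taking $R = (\|u\|_{L^1}/\|u\|_{L^\infty})^{1/n}$ produces $|\nabla\psi(x)| \le C \|u\|_{L^1}^{1/n} \|u\|_{L^\infty}^{(n-1)/n}$. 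Inserting $\|u(t)\|_{L^1}\le C$ and $\|u(t)\|_{L^\infty}\le C(1+t)^{-n/\theta}$ from \eqref{7} yields $\|\nabla\psi(t)\|_{L^\infty} \le C(1+t)^{-(n-1)/\theta}$, matching \eqref{decay-pot} at $p=\infty$.

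There is no serious obstacle: the HLS range is a direct application of Lemma \ref{HLS}, and the only subtle point — failure of HLS at $p=\infty$ — is handled by the standard truncation-and-optimization trick for Riesz potentials. The lower endpoint $p = n/(n-1)$ is correctly excluded because at that value one would need $r=1$, outside the admissible range of Lemma \ref{HLS}.
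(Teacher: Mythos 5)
Your proof is correct and follows essentially the same route as the paper: Hardy--Littlewood--Sobolev together with \eqref{7} for $\frac{n}{n-1}<p<\infty$, and a near/far splitting of the Riesz-kernel convolution for $p=\infty$. The only cosmetic difference is that you optimize the splitting radius $R$, whereas the paper simply takes $R=(1+t)^{1/\theta}$; both choices give the same bound $C(1+t)^{-(n-1)/\theta}$.
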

\begin{proof}
Lemma \ref{HLS} and \eqref{7} give the assertion for $\frac{n}{n-1} < p < \infty$.
Since
\[
	\nabla (-\Delta)^{-1} \varphi(x)
	=
	\frac{\Gamma (\tfrac{n}2)}{2 \pi^{\tfrac{n}2}} \int_{\mathbb{R}^n} \frac{x-y}{|x-y|^{n}} \varphi(y) dy,
\]
we see
\[
\begin{split}
	\left| \nabla (-\Delta)^{-1} u(t) \right|
	\le&
	C \biggl( \int_{|x-y| \le (1+t)^{1/\theta}} + \int_{|x-y| \ge (1+t)^{1/\theta}} \biggr)
		\frac{|u(t,y)|}{|x-y|^{n-1}}
	dy\\
	\le&
	C \left( (1+t)^{\frac1\theta} \| u(t) \|_{L^\infty (\mathbb{R}^n)}
	+
	(1+t)^{-\frac{n}\theta+\frac1\theta} \| u(t) \|_{L^1 (\mathbb{R}^n)} \right).
\end{split}
\]
This inequality together with \eqref{7} leads the assertion for $p = \infty$.
\end{proof}
The moment of the solution fulfills the following estimate.
\begin{proposition}\label{lem-wt}
	Let $n \ge 2,~ 0 < \theta \le 1$ and the solution $u$ of \eqref{DD} satisfy \eqref{7}.
	Assume that $x u_0 \in L^{n/(n-1)} (\mathbb{R}^n)$.
	Then
	\[
		\bigl\| x_j u (t) \bigr\|_{L^{n/(n-1)} (\mathbb{R}^n)}
		\le
		C \log (e+t)
	\]
	for $j = 1,\ldots,n$.
\end{proposition}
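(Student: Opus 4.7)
The strategy is to derive a transport--diffusion equation for the weighted quantity $v := x_j u$ and then run an $L^p$ energy estimate at the scaling endpoint $p = n/(n-1)$. I would start by multiplying \eqref{DD} by $x_j$ and using the Leibniz identity $x_j\nabla\cdot(u\nabla\psi) = \nabla\cdot(v\nabla\psi) - u\partial_j\psi$ together with the commutator identity
\[
	[x_j, (-\Delta)^{\theta/2}] u = \theta\, \partial_j (-\Delta)^{(\theta-2)/2} u,
\]
which follows directly on the Fourier side from $\mathcal{F}[x_j f](\xi) = i\partial_{\xi_j}\mathcal{F}[f](\xi)$. This yields
\[
	\partial_t v + (-\Delta)^{\theta/2} v - \nabla\cdot(v\nabla\psi)
	= - u\partial_j\psi - \theta\, \partial_j (-\Delta)^{(\theta-2)/2} u.
\]

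Next, I would multiply this equation by $|v|^{p-2} v$ and integrate in $x$. The fractional diffusion term is nonnegative by Lemma~\ref{lemC-C}; integration by parts together with $-\Delta\psi = u$ gives
\[
	\int_{\mathbb{R}^n} |v|^{p-2} v\, \nabla\cdot(v\nabla\psi)\, dx = -\frac{p-1}{p} \int_{\mathbb{R}^n} u\, |v|^p\, dx,
\]
whose absolute value is dominated by $C(1+t)^{-n/\theta} \|v(t)\|_{L^p}^p$ thanks to \eqref{7}. Dividing by $\|v\|_{L^p}^{p-1}$ (after the standard smoothing/cutoff justification discussed at the end) reduces the estimate to the differential inequality
\[
	\frac{d}{dt} \|v(t)\|_{L^p(\mathbb{R}^n)}
	\le C(1+t)^{-n/\theta} \|v(t)\|_{L^p(\mathbb{R}^n)}
	+ \|u\partial_j\psi(t)\|_{L^p(\mathbb{R}^n)}
	+ \theta \bigl\| \partial_j (-\Delta)^{(\theta-2)/2} u(t) \bigr\|_{L^p(\mathbb{R}^n)}.
\]

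The two remaining source terms I would bound as follows. For the nonlinear source, H\"older's inequality with the pair $(\|u\|_{L^n}, \|\nabla\psi\|_{L^{n/(n-2)}})$ when $n \ge 3$, and with, for instance, $(L^4, L^4)$ when $n = 2$, combined with \eqref{7} and \eqref{decay-pot}, gives $\|u\partial_j\psi(s)\|_{L^p} \le C(1+s)^{-n/\theta}$, which is integrable in $s$ since $n/\theta \ge 2$. For the commutator source, I would factor
\[
	\partial_j (-\Delta)^{(\theta-2)/2} u = \bigl[\partial_j (-\Delta)^{-1/2}\bigr]\, (-\Delta)^{(\theta-1)/2} u;
\]
when $\theta = 1$ the operator is simply the Riesz transform $\partial_j (-\Delta)^{-1/2}$, bounded on $L^p$; when $0 < \theta < 1$, the Hardy--Littlewood--Sobolev inequality at order $1-\theta \in (0,1)$ provides the continuity $L^r \to L^p$ with $1/r = 1/p + (1-\theta)/n$, i.e.\ $r = n/(n-\theta)$, so that
\[
	\bigl\| \partial_j (-\Delta)^{(\theta-2)/2} u(s) \bigr\|_{L^p} \le C\|u(s)\|_{L^r} \le C(1+s)^{-1},
\]
the last step using \eqref{7} and $1 - 1/r = \theta/n$. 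Gronwall's lemma applied to the differential inequality then yields $\|v(t)\|_{L^p} \le C\log(e+t)$.

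The principal technical obstacle is legitimising the $L^p$ energy computation rigorously, since \emph{a priori} $v = x_j u$ is not known to lie in $L^p$. I would handle this by replacing the weight $x_j$ by a truncated version $x_j\, \chi(x/R)$ with $\chi \in C_c^\infty(\mathbb{R}^n)$, performing the estimate uniformly in $R$ (using Lemma~\ref{cmm} to control the residual terms produced by differentiating the cutoff), and passing to the limit $R \to \infty$ via Fatou's lemma to recover the claimed bound.
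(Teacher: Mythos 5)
Your proposal is correct and follows essentially the same route as the paper: an $L^{n/(n-1)}$ energy estimate for $x_j u$ using the positivity lemma for the fractional diffusion, the commutator identity $[x_j,(-\Delta)^{\theta/2}]=\theta\,(-\Delta)^{(\theta-2)/2}\partial_j$ bounded via Hardy--Littlewood--Sobolev to get the $(1+t)^{-1}$ source that produces the logarithm, and a Gronwall-type closure (the paper phrases the transport terms slightly differently, absorbing $u\partial_j\psi$ into a $\|x_ju\|_{L^p}^{p-1}$ factor rather than bounding it as a separate integrable source, but this is immaterial). Your added cutoff/regularization step to justify the formal computation is a sensible supplement that the paper leaves implicit.
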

\begin{proof}
Let $p = \frac{n}{n-1}$.
Multiplying the first equation in \eqref{DD} by $x_j |x_j u|^{p-2} x_j u$ and integrate over $\mathbb{R}^n$, we have that
\begin{equation}\label{lem-wt-bs}
\begin{split}
	&\frac1p \frac{d}{dt} \bigl\| x_j u \bigr\|_{L^p (\mathbb{R}^n)}^p
	+
	\int_{\mathbb{R}^n}
		\bigl| x_j u \bigr|^{p-2} x_j u (-\Delta)^{\theta/2} (x_j u) dx\\
	=&
	-\int_{\mathbb{R}^n}
		|x_j u|^{p-2} x_j u
		\bigl[ x_j, (-\Delta)^{\theta/2} \bigr] u
	dx
	+
	\int_{\mathbb{R}^n}
		x_j |x_j u|^{p-2} x_j u \nabla u \cdot \nabla (-\Delta)^{-1} u
	dx
	-
	\int_{\mathbb{R}^n}
		|x_j u|^p u
	dx.
\end{split}
\end{equation}
Lemma \ref{lemC-C} implies the positivity of the second term in the left hand side of the above equality.
The relation $[x_j, (-\Delta)^{\theta/2}] = \theta (-\Delta)^{\frac{\theta-2}2} \partial_j$, the H\"older inequality, and Hardy-Littlewood-Sobolev's inequality together with \eqref{7} provide that
\[
\begin{split}
	\biggl| \int_{\mathbb{R}^n}
		|x_j u|^{p-2} x_j u
		\bigl[ x_j, (-\Delta)^{\theta/2} \bigr] u
	dx \biggr|
	\le&
	C \bigl\| (-\Delta)^{\frac{\theta-2}2} \partial_j u \bigr\|_{L^p (\mathbb{R}^n)}
	\bigl\| x_j u \bigr\|_{L^p (\mathbb{R}^n)}^{p-1}\\
	\le&
	C (1+t)^{-1} \bigl\| x_j u \bigr\|_{L^p (\mathbb{R}^n)}^{p-1}.
\end{split}
\]
%
Similarly we obtain that
\[
\begin{split}
	&\int_{\mathbb{R}^n}
		x_j |x_j u|^{p-2} x_j u \nabla u \cdot \nabla (-\Delta)^{-1} u
	dx
	-
	\int_{\mathbb{R}^n}
		|x_j u|^p u
	dx\\
	=&
	\int_{\mathbb{R}^n}
		|x_j u|^{p-2} x_j u
		\nabla (x_j u)
		\cdot
		\nabla (-\Delta)^{-1} u
	dx
	-
	\int_{\mathbb{R}^n}
		|x_j u|^p u
	dx
	+
	\int_{\mathbb{R}^n}
		|x_j u|^{p-2} x_j u
		\bigl[ x_j, \nabla \bigr] u
		\cdot
		\nabla (-\Delta)^{-1} u
	dx\\
	=&
	\left( \tfrac1p - 1 \right) \int_{\mathbb{R}^n}
		|x_j u|^p u
	dx
	+
	\int_{\mathbb{R}^n}
		u \partial_j (-\Delta)^{-1} u |x_j u|^{p-2} x_j u
	dx.
\end{split}
\]
The H\"older inequality, the Sobolev inequality and \eqref{7} yield that
\[
\begin{split}
	&\biggl|
		\int_{\mathbb{R}^n}
		x_j |x_j u|^{p-2} x_j u \nabla u \cdot \nabla (-\Delta)^{-1} u
	dx
	-
	\int_{\mathbb{R}^n}
		|x_j u|^p u
	dx
	\biggr|\\
	\le&
	C \bigl( \bigl\| u \bigr\|_{L^\infty (\mathbb{R}^n)} \bigl\| x_j u \bigr\|_{L^p (\mathbb{R}^n)}^p
	+
	\bigl\| u \partial_j (-\Delta)^{-1} u \bigr\|_{L^p (\mathbb{R}^n)}
	\bigl\| x_j u \bigr\|_{L^p (\mathbb{R}^n)}^{p-1} \bigr)\\
	\le&
	C (1+t)^{-n/\theta} \bigl( 1 + \bigl\| x_j u \bigr\|_{L^p (\mathbb{R}^n)} \bigr)
	\bigl\| x_j u \bigr\|_{L^p (\mathbb{R}^n)}^{p-1}\\
	\le&
	C \left( (1+t)^{-1} \bigl\| x_j u \bigr\|_{L^p (\mathbb{R}^n)}^{p-1} + (1+t)^{-n/\theta} \bigl\| x_j u \bigr\|_{L^p (\mathbb{R}^n)}^p \right).
\end{split}
\]
Therefore, from \eqref{lem-wt-bs}, we obtain the relation
\[
	f' (t) \le C_0 \left( (1+t)^{-1} f(t)^{1/n} + (1+t)^{-n/\theta} f(t) \right)
\]
for $f(t) = \| x_j u (t) \|_{L^p (\mathbb{R}^n)}^p$.
Let $g(t) = \exp (-C_0 \int_0^t (1+s)^{-n/\theta} ds)$.
Then there exists a positive constant $\varepsilon > 0$ such that $\varepsilon \le g(t) \le \varepsilon^{-1}$ for any $t$, and we see that
\[
	\left( f(t) g(t) \right)'
	\le
	C_0 (1+t)^{-1} f(t)^{1/n} g(t)
	\le
	C (1+t)^{-1} \left( f(t) g(t) \right)^{1/n}.
\]
Solving this inequality, we complete the proof.
\end{proof}
Since $L^1 (\mathbb{R}^n, (1+|x|)^2 dx) \cap L^\infty (\mathbb{R}^n) \subset L^{\frac{n}{n-1}} (\mathbb{R}^n, (1+|x|) dx)$, the assertion of Proposition \ref{lem-wt} is satisfied upon the assumption of our main theorems.
Before closing this section, we show the asymptotic profile of the solution.
\begin{proposition}\label{ap}
	Let $n \ge 2,~ 0 < \theta \le 1,~ 1 \le q \le \infty,~ u_0 \in L^1 (\mathbb{R}^n) \cap L^\infty (\mathbb{R}^n)$, and the solution $u$ of \eqref{DD} fulfill \eqref{7} and \eqref{8}.
	Then
	\[
		\left\| u(t) - M G_\theta (t) \right\|_{L^q (\mathbb{R}^n)}
		=
		o \bigl( t^{-\frac{n}\theta (1-\frac1p)} \bigr)
	\]
	as $t\to\infty$, where $M = \int_{\mathbb{R}^n} u_0 (y) dy$.
	In addition, if $xu_0 \in L^1 (\mathbb{R}^n)$, then
	\[
		\left\| u(t) - M G_\theta (t) \right\|_{L^q (\mathbb{R}^n)}
		\le
		\left\{
	 	\begin{array}{lr}
		C t^{-\frac{n}\theta (1-\frac1q)} (1+t)^{-\frac1\theta}
		&
		(n \ge 3~ \text{or}~ \theta < 1)\\
		C t^{-2(1-\frac1q)} (1+t)^{-1} \log (e+t)
		&
		(n = 2~ \text{and}~ \theta = 1)
		\end{array}
		\right.
	\]
	for $t > 0$.
\end{proposition}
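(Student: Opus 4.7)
The plan is to start from the mild-solution formula \eqref{MS} and decompose
\[
u(t) - M G_\theta(t) = R_0(t) + I_1(t) + I_2(t),
\]
where $R_0(t) = G_\theta(t) * u_0 - M G_\theta(t)$ and $I_1, I_2$ are the two nonlinear integrals in \eqref{MS}. For the linear remainder, Lemma \ref{ap-lin} with $N = 0$ immediately yields $\| R_0(t) \|_{L^q} = o(t^{-\frac{n}{\theta}(1-\frac{1}{q})})$ under only $u_0 \in L^1 \cap L^\infty$. When $xu_0 \in L^1$, applying Lemma \ref{ap-lin} with $N = 1$ and then absorbing $\| m \cdot \nabla G_\theta(t) \|_{L^q} \le C t^{-\frac{n}{\theta}(1-\frac{1}{q}) - \frac{1}{\theta}}$ gives $\| R_0(t) \|_{L^q} \le C t^{-\frac{n}{\theta}(1-\frac{1}{q})}(1+t)^{-\frac{1}{\theta}}$, where the shape $(1+t)^{-\frac{1}{\theta}}$ rather than $t^{-\frac{1}{\theta}}$ is recovered on $(0,1]$ from the trivial estimate $\| u(t) - M G_\theta(t) \|_{L^q} \le \| u(t) \|_{L^q} + |M| \| G_\theta(t) \|_{L^q} \le C t^{-\frac{n}{\theta}(1-\frac{1}{q})}$.

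For $I_1$, I would move the derivative onto $G_\theta$, which is harmless since $t - s \ge t/2$ on the range of integration. Young's inequality and Lemma \ref{decay-lin} give
\[
\| I_1(t) \|_{L^q}
\le \int_0^{t/2} \| \nabla G_\theta(t-s) \|_{L^q}\, \| (u \nabla\psi)(s) \|_{L^1}\, ds
\le C t^{-\frac{n}{\theta}(1-\frac{1}{q}) - \frac{1}{\theta}} \int_0^{t/2} (1+s)^{-\frac{n-1}{\theta}}\, ds,
\]
where $\| u(s) \|_{L^1} \| \nabla \psi(s) \|_{L^\infty} \le C (1+s)^{-(n-1)/\theta}$ by \eqref{7} and Proposition \ref{HLSinfty}. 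The $s$-integral is bounded when $(n-1)/\theta > 1$ and equals $\log(1 + t/2)$ exactly when $(n-1)/\theta = 1$, i.e.\ in the critical case $n = 2,\, \theta = 1$; this is the only source of the logarithmic factor in the proposition.

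For $I_2$, the borderline regularity $\theta \le 1$ makes $\nabla G_\theta \notin L^1(0, T; L^1(\mathbb{R}^n))$, so I cannot put $\nabla$ on the kernel; instead I use the Poisson identity $\nabla \cdot (u \nabla\psi) = \nabla u \cdot \nabla \psi + u \Delta \psi = \nabla u \cdot \nabla \psi - u^2$ together with the trivial Young bound $\| G_\theta(t-s) * f \|_{L^q} \le \| f \|_{L^q}$. The first factor is controlled by $\| u^2(s) \|_{L^q} \le \| u(s) \|_{L^{2q}}^2 \le C(1+s)^{-\frac{n}{\theta}(2-\frac{1}{q})}$ from \eqref{7}. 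For $\nabla u \cdot \nabla \psi$, I apply H\"older with $\tfrac{1}{r}+\tfrac{1}{r'}=\tfrac{1}{q}$ and $r' > n/(n-1)$, using Proposition \ref{HLSinfty} for $\| \nabla \psi \|_{L^{r'}}$ and, for $\| \nabla u \|_{L^r}$, Gagliardo--Nirenberg (Lemma \ref{GN}) interpolating between \eqref{7} and the high-order estimate $\| (-\Delta)^{s/2} u \|_{L^2} \le C t^{-\frac{n}{2\theta}-\frac{s}{\theta}}$ of Proposition \ref{lem-dr}; this again produces $C(1+s)^{-\frac{n}{\theta}(2-\frac{1}{q})}$. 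Since $\frac{n}{\theta}(2-\frac{1}{q}) > 1$, integration over $(t/2, t)$ yields $C t^{1 - \frac{n}{\theta}(2-\frac{1}{q})} = C t^{-\frac{n}{\theta}(1-\frac{1}{q}) - \frac{n-\theta}{\theta}}$, which is dominated by $C t^{-\frac{n}{\theta}(1-\frac{1}{q}) - \frac{1}{\theta}}$ because $n - \theta \ge n - 1 \ge 1$ under our hypotheses.

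The main technical obstacle is the choice of $r, r'$ in $I_2$ when $q = 1$ and $n = 2$: Proposition \ref{HLSinfty} forces $r' > 2$, hence $r < 2$, a range not reachable from $\| (-\Delta)^{s/2} u \|_{L^2}$ by a single Sobolev embedding. The remedy is to interpolate via Lemma \ref{GN} with $p_1$ slightly above $1$ (legitimate since $u \in L^1$ by \eqref{7}) and $p_2 = 2$, choosing $s > 1$ large enough to land at the desired $r$; the resulting decay is insensitive to $p_1 \downarrow 1$. Assembling the three pieces yields both conclusions of Proposition \ref{ap}: the quantitative estimate directly, and the bare $o$-statement from $\| R_0 \|_{L^q} = o(t^{-\frac{n}{\theta}(1-\frac{1}{q})})$ combined with the strictly faster bounds on $I_1$ and $I_2$.
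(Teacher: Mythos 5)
Your decomposition, your treatment of the linear remainder via Lemma \ref{ap-lin}, and your estimate of the far-field integral $I_1$ (putting $\nabla$ on the kernel, using Lemma \ref{decay-lin} and $\|u\nabla\psi\|_{L^1}\le C(1+s)^{-(n-1)/\theta}$, and identifying $n=2$, $\theta=1$ as the source of the logarithm) coincide with the paper's proof, and your handling of the near-field integral $I_2$ in $L^1$ is essentially the paper's argument in slightly more general clothing (the paper takes $r=r'=2$ for $n\ge3$ and $r=9/5$, $r'=9/4$ for $n=2$, exactly the Gagliardo--Nirenberg interpolation you describe). However, there is a genuine gap at the other endpoint $q=\infty$ (and more generally for large $q$). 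Your plan for $I_2$ rests on the Young bound $\|G_\theta(t-s)*f\|_{L^q}\le\|f\|_{L^q}$, so for $q=\infty$ you must estimate $\|\nabla u\cdot\nabla\psi\|_{L^\infty}$, and H\"older with $\tfrac1r+\tfrac1{r'}=\tfrac1q=0$ forces $r=\infty$: you would need a decay bound on $\|\nabla u(s)\|_{L^\infty}$. That is not obtainable from the tools you invoke. Proposition \ref{lem-dr} is purely $L^2$-based, and Lemma \ref{GN} carries the restrictions $\sigma<s<n$ and $p_2<\infty$, so starting from $\|(-\Delta)^{s/2}u\|_{L^2}$ you can only reach $\|\nabla u\|_{L^r}$ with $\tfrac1r>\tfrac1{2n}$ (for $n=2$ the borderline $1+\tfrac{n}2=n$ makes $L^\infty$ outright unreachable). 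The same obstruction already bites for finite $q>2n$, where your H\"older split would require $\tfrac1{r'}=\tfrac1q-\tfrac1r<0$.

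The missing idea is the one the paper uses in \eqref{ap-bs4}: do not discard all of the smoothing of the kernel on $(t/2,t)$. Estimating $\|G_\theta(t-s)*\nabla\cdot(u\nabla\psi)\|_{L^\infty}\le C(t-s)^{-\frac{n}{\theta r}}\|\nabla\cdot(u\nabla\psi)\|_{L^r}$ with $r$ large but finite (the paper takes $\tfrac1r=\tfrac\varepsilon2$, $\sigma=\tfrac{(1-\varepsilon)n}2$, so $\|\nabla u\|_{L^r}\le C\|\nabla(-\Delta)^{\sigma/2}u\|_{L^2}$ by Sobolev) costs only an integrable singularity $(t-s)^{-\varepsilon n/(2\theta)}$ and needs $\nabla u$ only in a finite Lebesgue exponent, together with $\|\nabla\psi\|_{L^\infty}$ from Proposition \ref{HLSinfty}. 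With that correction your argument closes: the $L^1$ and $L^\infty$ bounds interpolate to all $q$, and the rest of your proposal (including the reduction of the $o$-statement to the linear part) is sound.
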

\begin{proof}
By \eqref{MS}, we see that
\begin{equation}\label{ap-bs}
\begin{split}
	&u(t) - MG_\theta (t)
	=
	G_\theta (t) * u_0 - MG_\theta (t)
	+
	\int_0^t \nabla G_\theta (t-s) * (u\nabla(-\Delta)^{-1}u) (s) ds.
\end{split}
\end{equation}
Since the estimate for the linear part is well-known, we consider the nonlinear term.
By Lemmas \ref{decay-lin} and \ref{HLS}, and \eqref{7}, we have that
\[
\begin{split}
	&\biggl\|
	\int_0^{t/2}
		\nabla G_\theta (t-s) * (u\nabla(-\Delta)^{-1}u) (s)
	ds
	\biggr\|_{L^q (\mathbb{R}^n)}\\
	\le&
	C \int_0^{t/2}
		(t-s)^{-\frac{n}\theta (1-\frac1q) - \frac1\theta}
		\| u \nabla (-\Delta)^{-1} u \|_{L^1 (\mathbb{R}^n)}
	ds\\
	\le&
	C \int_0^{t/2}
		(t-s)^{-\frac{n}\theta (1-\frac1q) - \frac1\theta}
		(1+s)^{-\frac{n-1}\theta}
	ds.
\end{split}
\]
Thus
\begin{equation}\label{ap-bs1}
\begin{split}
	&\biggl\|
	\int_0^{t/2}
		\nabla G_\theta (t-s) * (u\nabla(-\Delta)^{-1}u) (s)
	ds
	\biggr\|_{L^q (\mathbb{R}^n)}\\
	\le&
		\left\{
	 	\begin{array}{lr}
		C t^{-\frac{n}\theta (1-\frac1q)} (1+t)^{-\frac1\theta}
		&
		(n \ge 3~ \text{or}~ \theta < 1)\,\\
		C t^{-2(1-\frac1q)} (1+t)^{-1} \log (e+t)
		&
		(n = 2~ \text{and}~ \theta = 1).
		\end{array}
		\right.
\end{split}
\end{equation}
When $n \ge 3$, we obtain by Lemmas \ref{decay-lin} and \ref{HLS}, Proposition \ref{lem-dr}, and \eqref{7} that
\[
\begin{split}
	&\biggl\|
	\int_{t/2}^t
		G_\theta (t-s) * \nabla \cdot (u\nabla(-\Delta)^{-1}u) (s)
	ds
	\biggr\|_{L^1 (\mathbb{R}^n)}\\
	\le&
	C \int_{t/2}^t
	\left( \| \nabla u \|_{L^2 (\mathbb{R}^n)} \| \nabla (-\Delta)^{-1} u \|_{L^2 (\mathbb{R}^n)}
	+ \| u \|_{L^2 (\mathbb{R}^n)}^2 \right)
	ds\\
	\le&
	C \int_{t/2}^t
		(1+s)^{-\frac{n}\theta}
	ds.
\end{split}
\]
When $n = 2$, Lemma \ref{decay-lin} gives that
\[
\begin{split}
	&\biggl\|
	\int_{t/2}^t
		G_\theta (t-s) * \nabla \cdot (u\nabla(-\Delta)^{-1}u) (s)
	ds
	\biggr\|_{L^1 (\mathbb{R}^2)}\\
	\le&
	C \int_{t/2}^t
		\left( \bigl\| \nabla u \bigr\|_{L^{9/5} (\mathbb{R}^2)} \bigl\| \nabla (-\Delta)^{-1} u \bigr\|_{L^{9/4} (\mathbb{R}^2)}
		+
		\bigl\| u \bigr\|_{L^2 (\mathbb{R}^2)}^2 \right)
	ds.
\end{split}
\]
Here the Gagliardo-Nirenberg inequality yields that
\[
	\bigl\| \nabla u \bigr\|_{L^{9/5} (\mathbb{R}^2)}
	\le
	C \bigl\| u \bigr\|_{L^{3/2} (\mathbb{R}^2)}^{1/3} \bigl\| (-\Delta)^{3/4} u \bigr\|_{L^2(\mathbb{R}^2)}^{2/3}.
\]
Therefore Lemmas \ref{HLS} and \ref{lem-dr}, and \eqref{7} conclude that
\[
	\biggl\|
	\int_{t/2}^t
		G_\theta (t-s) * \nabla \cdot (u\nabla(-\Delta)^{-1}u) (s)
	ds
	\biggr\|_{L^1 (\mathbb{R}^2)}
	\le
	C \int_{t/2}^t (1+s)^{-\frac2\theta} ds.
\]
If we put $\sigma = \frac{(1-\varepsilon)n}{2}$ and $\frac1r = \frac\varepsilon{2}$ for some small $\varepsilon > 0$, then, by Lemma \ref{decay-lin} and the Sobolev inequality, we obtain that
\[
\begin{split}
	&\biggl\| 
	\int_{t/2}^t
		G_\theta (t-s) * \nabla \cdot (u\nabla(-\Delta)^{-1}u) (s)
	ds
	\biggr\|_{L^\infty (\mathbb{R}^n)}\\
	\le&
	C \int_{t/2}^t
		(t-s)^{-\frac{n}{\theta r}}
		\bigl(
		\bigl\| \nabla u \bigr\|_{L^r (\mathbb{R}^n)} \bigl\| \nabla (-\Delta)^{-1} u \bigr\|_{L^\infty (\mathbb{R}^n)}
		+
		\bigl\| u \bigr\|_{L^{2r} (\mathbb{R}^n)}^2
		\bigr)
	ds\\
	\le&
	\int_{t/2}^t
		(t-s)^{-\frac{\varepsilon n}{2\theta}}
		\bigl( \| \nabla (-\Delta)^{\sigma/2} u \|_{L^2 (\mathbb{R}^n)} \| \nabla (-\Delta)^{-1} u \|_{L^\infty (\mathbb{R}^n)}
		+
		\bigl\| u \bigr\|_{L^{2r} (\mathbb{R}^n)}^2
		\bigr)
	ds.
\end{split}
\]
Hence, by Propositions \ref{lem-dr} and \ref{HLSinfty}, and \eqref{7}, we obtain that
\[
\begin{split}
	&\biggl\| 
	\int_{t/2}^t
		G_\theta (t-s) * \nabla \cdot (u\nabla(-\Delta)^{-1}u) (s)
	ds
	\biggr\|_{L^\infty (\mathbb{R}^n)}
	\le
	C \int_{t/2}^t
		(t-s)^{-\frac{\varepsilon n}{2\theta}}
		(1+s)^{-\frac{2n}\theta + \frac{\varepsilon n}{2\theta}}
	ds.
\end{split}
\]
Thus, by the H\"older inequality, we conclude that
\begin{equation}\label{ap-bs4}
	\biggl\| 
	\int_{t/2}^t
		G_\theta (t-s) * \nabla \cdot (u\nabla(-\Delta)^{-1}u) (s)
	ds
	\biggr\|_{L^q (\mathbb{R}^n)}
	\le
	C (1+t)^{-\frac{n}\theta (1-\frac1q) - \frac{n}\theta + 1}
\end{equation}
for $1 \le q \le \infty$.
Applying \eqref{ap-bs1} and \eqref{ap-bs4} to \eqref{ap-bs}, we complete the proof.
\end{proof}
\section{Proof of main results}\label{prf}
\subsection{Proof of Theorem \ref{th1}}
In \eqref{MS}, large-time behavior of $G_\theta (t) * u_0$ is well-known.
We split the nonlinear term into
\begin{equation}\label{rbs}
\begin{split}
	&\int_0^t \nabla G_\theta (t-s) * (u\nabla(-\Delta)^{-1}u) (s) ds\\
	=&
	\sum_{|\beta| = 1} \nabla^\beta \nabla G_\theta (t) \cdot \int_0^\infty \int_{\mathbb{R}^n} (-y)^\beta (u \nabla (-\Delta)^{-1} u) (s,y) dyds
	+ r_1 (t) + r_2 (t) + r_3 (t),
\end{split}
\end{equation}
where
\[
\begin{split}
	r_1 (t)
	=&
	\int_0^{t/2} \int_{\mathbb{R}^n}
		\bigl( \nabla G_\theta (t-s,x-y) - \sum_{|\beta| = 1} \nabla^\beta \nabla G_\theta (t,x) (-y)^\beta \bigr)
		\cdot
		(u \nabla (-\Delta)^{-1} u) (s,y)
	dyds,\\
	r_2 (t)
	=&
	\int_{t/2}^t
		G_\theta (t-s)
		*
		\nabla\cdot (u \nabla (-\Delta)^{-1} u) (s)
	ds,\\
	r_3 (t)
	=&
	- \sum_{|\beta| = 1} \nabla^\beta \nabla G_\theta (t) \cdot \int_{t/2}^\infty \int_{\mathbb{R}^n} (-y)^\beta (u \nabla (-\Delta)^{-1} u) (s,y) dyds.
\end{split}
\]
Since $\int_{\mathbb{R}^n} u \nabla (-\Delta)^{-1} u dy = 0$, $r_1$ is represented by
\[
\begin{split}
	r_1 (t)
	=&
	\int_0^{t/2} \int_{\mathbb{R}^n}
		\bigl( \nabla G_\theta (t-s,x-y) - \sum_{|\beta| \le 1} \nabla^\beta \nabla G_\theta (t-s,x) (-y)^\beta \bigr)
		\cdot
		(u \nabla (-\Delta)^{-1} u) (s,y)
	dyds\\
	&+
	\sum_{|\beta|=1} \int_0^{t/2} \int_{\mathbb{R}^n}
		\bigl( \nabla^\beta \nabla G_\theta (t-s,x) - \nabla^\beta \nabla G_\theta (t,x) \bigr)
		\cdot
		(-y)^\beta (u \nabla (-\Delta)^{-1} u) (s,y)
	dyds.
\end{split}
\]
For some $R(t) = o (t^{1/\theta})~ (t\to\infty)$, we divide $r_1$ to $r_1 = r_{1,1} + r_{1,2} + r_{1,3}$, where
\[
\begin{split}
	r_{1,1} (t)
	=&
	\int_0^{t/2} \int_{|y| \le R(t)}
		\biggl( \nabla G_\theta (t-s,x-y) - \sum_{|\beta| \le 1} \nabla^\beta \nabla G_\theta (t-s,x) (-y)^\beta \biggr)
		\cdot
		(u \nabla (-\Delta)^{-1} u) (s,y)
	dyds,\\
	r_{1,2} (t)
	=&
	\int_0^{t/2} \int_{|y| > R(t)}
		\biggl( \nabla G_\theta (t-s,x-y) - \sum_{|\beta| \le 1} \nabla^\beta \nabla G_\theta (t-s,x) (-y)^\beta \biggr)
		\cdot
		(u \nabla (-\Delta)^{-1} u) (s,y)
	dyds,\\
	r_{1,3} (t)
	=&
	\sum_{|\beta| = 1} \int_0^{t/2} \int_{\mathbb{R}^n}
		\biggl( \nabla^\beta \nabla G_\theta (t-s,x) - \nabla^\beta \nabla G_\theta (t,x) \biggr) \cdot
		(-y)^\beta
		(u \nabla (-\Delta)^{-1} u) (s,y)
	dyds.
\end{split}
\]
Taylor's theorem yields that
\[
\begin{split}
	r_{1,1} (t)
	=&
	\sum_{|\beta| = 2}
	\int_0^{t/2} \int_{|y| \le R(t)} \int_0^1
		\frac{\nabla^\beta \nabla G_\theta (t-s,x-y+\lambda y)}{\beta!}
		\cdot
		\lambda (-y)^\beta (u \nabla (-\Delta)^{-1} u) (s,y)
	d\lambda dyds,\\
	r_{1,2} (t)
	=&
	\sum_{|\beta| =1}
	\int_0^{t/2} \int_{|y| > R(t)}
	\biggl(
		\int_0^1 \nabla^\beta \nabla G_\theta (t-s,x-y+\lambda y) d\lambda
		+
		\nabla^\beta \nabla G_\theta (t-s,x)
	\biggr)\\
	&\hspace{15mm}\cdot
	(-y)^\beta (u \nabla (-\Delta)^{-1} u) (s,y)
	dyds,\\
	r_{1,3} (t)
	=&
	\sum_{|\beta|=1} \int_0^{t/2} \int_{\mathbb{R}^n} \int_0^1
		\partial_t \nabla^\beta \nabla G_\theta (t-s+\lambda s,x)
		\cdot
		(-s) (-y)^\beta (u\nabla (-\Delta)^{-1} u) (s)
	d\lambda dyds.
\end{split}
\]
By Lemma \ref{decay-lin} and Propositions \ref{HLSinfty} and \ref{lem-wt}, we have that
\[
\begin{split}
	\| r_{1,1} (t) \|_{L^q (\mathbb{R}^n)}
	\le&
	C R(t) \int_0^{t/2}
		(t-s)^{-\frac{n}\theta (1-\frac1q) - \frac3\theta}
		\left\| y (u\nabla(-\Delta)^{-1}u) (s) \right\|_{L^1 (\mathbb{R}^n)}
	ds\\
	\le&
	C R(t) \int_0^{t/2}
		(t-s)^{-\frac{n}\theta (1-\frac1q) - \frac3\theta}
		(1+s)^{-\frac{n-2}\theta} \log (e+s)
	ds.
\end{split}
\]	
Thus
\[
	\| r_{1,1} (t) \|_{L^q (\mathbb{R}^n)}
	=
	o \bigl( t^{-\frac{n}\theta (1-\frac1q) - \frac2\theta} \bigr)
\]
as $t\to\infty$.
Similarly
\[
	\| r_{1,2} (t) \|_{L^q (\mathbb{R}^n)}
	\le
	C \int_0^{t/2}
		(t-s)^{-\frac{n}\theta (1-\frac1q) -\frac2\theta}
		\bigl\| y (u\nabla(-\Delta)^{-1}u) (s,y) \bigr\|_{L^1 (|y| \ge R(t))}
	ds.
\]
Hence, by Lebesgue's monotone convergence theorem together with
\[
\begin{split}
	&\int_0^{t/2}
		(t-s)^{-\frac{n}\theta (1-\frac1q) -\frac2\theta}
		\bigl\| y (u\nabla(-\Delta)^{-1}u) (s,y) \bigr\|_{L^1 (\mathbb{R}^n)}
	ds\\
	\le&
	C \int_0^{t/2}
		(t-s)^{-\frac{n}\theta (1-\frac1q) -\frac2\theta}
		(1+s)^{-\frac{n-2}\theta} \log (e+s)
	ds
	=
	O \bigl( t^{-\frac{n}\theta (1-\frac1q) - \frac2\theta} \bigr),
\end{split}
\]
we conclude that
\[
	\| r_{1,2} (t) \|_{L^q (\mathbb{R}^n)}
	=
	o \bigl( t^{-\frac{n}\theta (1-\frac1q) - \frac2\theta} \bigr)
\]
as $t\to\infty$.
Moreover
\[
\begin{split}
	\| r_{1,3} (t) \|_{L^q (\mathbb{R}^n)}
	\le&
	C \int_0^{t/2}
		(t-s)^{-\frac{n}\theta (1-\frac1q) - \frac2\theta - 1}
		(1+s)^{-\frac{n-2}\theta} \log (e+s)
	ds.
\end{split}
\]
Thus
\[
	\| r_{1,3} (t) \|_{L^q (\mathbb{R}^n)}
	=
	o \bigl( t^{-\frac{n}\theta (1-\frac1q) - \frac2\theta} \bigr)
\]
as $t\to\infty$.
Consequently
\begin{equation}\label{r_1}
	\| r_1 (t) \|_{L^q (\mathbb{R}^n)}
	=
	o \bigl( t^{-\frac{n}\theta (1-\frac1q) - \frac2\theta} \bigr)
\end{equation}
as $t\to\infty$.
The inequality \eqref{ap-bs4} leads that
\begin{equation}\label{r_2}
	\| r_2 (t) \|_{L^q (\mathbb{R}^n)}
	=
	o \bigl( t^{-\frac{n}\theta (1-\frac1q) - \frac2\theta} \bigr)
\end{equation}
as $t\to\infty$.
Propositions \ref{HLSinfty} and \ref{lem-wt} provide that
\[
	\| r_3 (t) \|_{L^q (\mathbb{R}^n)}
	\le
	C t^{-\frac{n}\theta (1-\frac1q) - \frac2\theta}
	\int_{t/2}^\infty
		s^{-\frac{n-2}\theta} \log (e+s)
	ds
\]
and
\begin{equation}\label{r_3}
	\| r_3 (t) \|_{L^q (\mathbb{R}^n)}
	=
	o \bigl( t^{-\frac{n}\theta (1-\frac1q) - \frac2\theta} \bigr)
\end{equation}
as $t\to\infty$.
Applying \eqref{r_1}, \eqref{r_2} and \eqref{r_3} to \eqref{rbs}, we complete the proof.
\hfill$\square$
\subsection{Proof of Theorem \ref{th2}}
To show Theorems \ref{th2} and \ref{th3}, we prepare the following estimates.
\begin{proposition}\label{tl2}
	Let $n \ge 2,~ 0 < \theta \le 1$ and $\sigma > 0$.
	Assume that the solution $u$ of \eqref{DD} satisfies \eqref{7} and \eqref{8}.
	Then there exist positive constants $C$ and $T$ such that
	\[
		\bigl\| (-\Delta)^{\sigma/2} \left( u(t) - M G_\theta (t) \right) \bigr\|_{L^2 (\mathbb{R}^n)}
		\le
		\left\{
	 	\begin{array}{lr}
		C t^{-\frac{n}{2\theta} - \frac\sigma\theta} (1+t)^{-\frac1\theta}
		&
		(n \ge 3~ \text{or}~ \theta < 1)\\
		C t^{-1-\sigma} (1+t)^{-1} \log (e+t)
		&
		(n = 2~ \text{and}~ \theta = 1)
		\end{array}
		\right.
	\]
	for $t \ge T$, where $M = \int_{\mathbb{R}^n} u_0 (y) dy$.
\end{proposition}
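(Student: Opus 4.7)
The plan is to adapt the weighted $L^2$ energy method used in the proof of Proposition \ref{lem-dr} to the difference $w(t) := u(t) - M G_\theta(t)$. Since $M G_\theta$ solves the free fractional heat equation, $w$ satisfies
\[
\partial_t w + (-\Delta)^{\theta/2} w = \nabla \cdot (u \nabla \psi),
\]
with the same nonlinear right-hand side as in \eqref{DD}, so the computation of Proposition \ref{lem-dr} goes through with $u$ replaced by $w$ in the test function but left as $u$ in the nonlinearity.

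First, I would apply $(-\Delta)^{\sigma/2}$ to this equation and pair with $t^q (-\Delta)^{\sigma/2} w$ in $L^2$, for a weight $q > n/\theta + 2\sigma/\theta + 2/\theta$. The positivity lemma (Lemma \ref{lemC-C}) produces the dissipation $t^q \|(-\Delta)^{\sigma/2 + \theta/4} w\|_{L^2}^2$, and the lower-order time-derivative term $q t^{q-1}\|(-\Delta)^{\sigma/2} w\|_{L^2}^2$ is handled by Gagliardo--Nirenberg (Lemma \ref{GN}) with exponent $\lambda = 2\sigma/(2\sigma+\theta)$, interpolating between $\|w\|_{L^2}$ (bounded via Proposition \ref{ap}, which supplies the improved factor $(1+t)^{-1/\theta}$, or the logarithmic correction in the case $n = 2$, $\theta = 1$) and the dissipation. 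Young's inequality absorbs half of the dissipation and leaves a source of the target order.

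Second, I would treat the nonlinear contribution
\[
t^q \int (-\Delta)^{\sigma/2} w \cdot (-\Delta)^{\sigma/2} \nabla \cdot (u\nabla\psi)\, dx
\]
just as in the proof of Proposition \ref{lem-dr}: integration by parts and the expansion $(-\Delta)^{\sigma/2}(u\nabla\psi) = \nabla\psi \cdot (-\Delta)^{\sigma/2} u + [(-\Delta)^{\sigma/2}, \nabla\psi] u$, together with the split $u = w + MG_\theta$. The $w$-part of the leading piece yields, after a further integration by parts using $\Delta\psi = -u$, a cubic term bounded by $C(1+t)^{-n/\theta}\|(-\Delta)^{\sigma/2} w\|_{L^2}^2$, absorbable for large $t$; the $MG_\theta$-cross-term is a source controlled by the pointwise bounds on $(-\Delta)^{\sigma/2} G_\theta$ together with the $L^\infty$ decay of $\nabla\psi$ from \eqref{decay-pot}. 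The commutator is bounded via Lemma \ref{cmm} with indices $p_1 = p_3 = n/\theta$, producing the small multiplier $(1+t)^{-n/\theta + 1}\|u\|_{L^{n/\theta}}$, again absorbable for large $t$, exactly as in the proof of Proposition \ref{lem-dr}.

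Combining these estimates yields a differential inequality whose integration from some large $T$ to $t$ gives the claimed bound. The main obstacle is matching the decay of the source arising from $M G_\theta \nabla \psi$ with the target exponent: the time-integral $\int_0^{t/2}(1+s)^{-(n-1)/\theta}\, ds$ is uniformly bounded precisely when $n \ge 3$ or $\theta < 1$, and diverges logarithmically at the borderline $n = 2$, $\theta = 1$, which is exactly the origin of the $\log(e+t)$ factor in the statement.
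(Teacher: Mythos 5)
Your argument is correct in outline, but it takes a genuinely different route from the paper's. The paper proves Proposition \ref{tl2} directly from the Duhamel formula \eqref{MS}: on $[0,t/2]$ it puts $\nabla(-\Delta)^{\sigma/2}$ on the kernel and uses Lemma \ref{decay-lin} together with $\|u\nabla(-\Delta)^{-1}u(s)\|_{L^1}\le C(1+s)^{-\frac{n-1}{\theta}}$, and on $[t/2,t]$ it puts the derivatives on the nonlinearity and combines Lemma \ref{cmm} with Propositions \ref{lem-dr} and \ref{HLSinfty} and the Sobolev inequality; the borderline integral $\int_0^{t/2}(1+s)^{-\frac{n-1}{\theta}}ds$ is then visibly where the logarithm comes from when $n=2$, $\theta=1$. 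You instead rerun the weighted energy scheme of Proposition \ref{lem-dr} on $w=u-MG_\theta$, feeding the improved decay of $\|w\|_{L^2}$ from Proposition \ref{ap} into the Gagliardo--Nirenberg interpolation and treating the cross term $M\int(-\Delta)^{\sigma/2}w\,\nabla(-\Delta)^{\sigma/2}G_\theta\cdot\nabla\psi\,dx$ as a source; the exponents close precisely because $\theta\le n-1$ (with equality at $n=2$, $\theta=1$, where the logarithm now enters through $\|w\|_{L^2}$ itself rather than through a time integral — your closing remark attributes it to the Duhamel integral, which is really the mechanism inside Proposition \ref{ap}, not inside your energy argument). Two things to make explicit if you write this up: first, the absorption of the cross term and of the commutator source requires checking $(n-1)/\theta\ge 1$, which should be stated rather than left implicit; second, your route (like the paper's, which silently omits the linear term $G_\theta(t)*u_0-MG_\theta(t)$) needs the first moment of $u_0$, here because the sharpened bound in Proposition \ref{ap} requires $xu_0\in L^1(\mathbb{R}^n)$, a hypothesis not listed in the proposition but available wherever it is applied. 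The trade-off is that your proof reuses the machinery of Proposition \ref{lem-dr} almost verbatim and avoids splitting the Duhamel integral, while the paper's is shorter because the $[t/2,t]$ estimate it needs is exactly the template for the later remainder bounds on $\tilde r_2$, $\varrho_2$ and $\rho_2$.
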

\begin{proof}
We consider only the nonlinear term of \eqref{MS}.
By Lemma \ref{decay-lin}, we see that
\[
\begin{split}
	&\biggl\| (-\Delta)^{\sigma/2} \int_0^t
		\nabla G_\theta (t-s) * (u\nabla(-\Delta)^{-1}u) (s)
	ds \biggr\|_{L^2 (\mathbb{R}^n)}\\
	\le&
	\biggl\| \int_0^{t/2}
		\nabla (-\Delta)^{\sigma/2} G_\theta (t-s) * (u\nabla(-\Delta)^{-1}u) (s)
	ds \biggr\|_{L^2 (\mathbb{R}^n)}\\
	&+
	\biggl\| \int_{t/2}^t
		G_\theta (t-s)
		*\nabla (-\Delta)^{\sigma/2} \cdot  (u\nabla(-\Delta)^{-1}u) (s)
	ds \biggr\|_{L^2 (\mathbb{R}^n)}\\
	\le&
	C \int_0^{t/2}
		(t-s)^{-\frac{n}{2\theta} - \frac{1+\sigma}\theta}
		\bigl\| (u\nabla(-\Delta)^{-1}u) (s) \bigr\|_{L^1 (\mathbb{R}^n)}
	ds\\
	&+
	C \int_{t/2}^t
		\bigl\| \nabla (-\Delta)^{\sigma/2} \cdot (u\nabla(-\Delta)^{-1}u) (s) \bigr\|_{L^2 (\mathbb{R}^n)}
	ds.
\end{split}
\]
The H\"older inequality, Proposition \ref{HLSinfty} and \eqref{7} yield that
\[
	\bigl\| (u\nabla(-\Delta)^{-1}u) (s) \bigr\|_{L^1 (\mathbb{R}^n)}
	\le
	C (1+s)^{-\frac{n-1}\theta}
\]
for $s > 0$.
From Lemma \ref{cmm}, we have that
\[
\begin{split}
	&\bigl\| \nabla (-\Delta)^{\sigma/2} \cdot (u\nabla(-\Delta)^{-1}u) (s) \bigr\|_{L^2 (\mathbb{R}^n)}\\
	\le&
	C \left( \bigl\| \nabla (-\Delta)^{\sigma/2} u \bigr\|_{L^4 (\mathbb{R}^n)} \bigl\| \nabla (-\Delta)^{-1} u \bigr\|_{L^4 (\mathbb{R}^n)}
	+\bigl\| u \bigr\|_{L^4 (\mathbb{R}^n)}
	\bigl\| \nabla^2 (-\Delta)^{\frac\sigma{2}-1} u \bigr\|_{L^4 (\mathbb{R}^n)} \right).
\end{split}
\]
Hence, by a coupling of the Sobolev inequality and Proposition \ref{lem-dr}, Proposition \ref{HLSinfty}, and \eqref{7}, we obtain that
\[
	\bigl\| \nabla (-\Delta)^{\sigma/2} \cdot (u\nabla(-\Delta)^{-1}u) (s) \bigr\|_{L^2 (\mathbb{R}^n)}
	\le
	C s^{-\frac{n}{2\theta} - \frac{n}\theta - \frac\sigma\theta}
\]
for large $s$.
Therefore we complete the proof.
\end{proof}
\begin{proposition}\label{HLSth2}
	Let $n \ge 2,~ 0 < \theta \le 1$ and $\varepsilon > 0$.
	Assume that $u_0 \in L^1 (\mathbb{R}^n, (1+|x|^2) dx) \cap L^\infty (\mathbb{R}^n)$, and the solution $u$ of \eqref{DD} satisfies \eqref{7}.
	Then there exists positive 	constant $C$ such that
	\[
		\left\| \nabla (-\Delta)^{-1} \left( u(t) - M G_\theta (t) \right) \right\|_{L^2 (\mathbb{R}^n)}
		\le
		\left\{
 		\begin{array}{lr}
		C t^{-\frac\varepsilon\theta} (1+t)^{-\frac{n}{2\theta} + \frac\varepsilon\theta}
		&
		(n \ge 3~ \text{or}~ \theta < 1)\\
		C t^{-\varepsilon} (1+t)^{-1 + \varepsilon} \log (e+t)
		&
		(n = 2~ \text{and}~ \theta = 1)
		\end{array}
		\right.
	\]
	for $t > 0$, where $M = \int_{\mathbb{R}^n} u_0 (y) dy$.
\end{proposition}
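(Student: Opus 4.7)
The plan is to use the mild solution \eqref{MS} to decompose
\[
u(t)-MG_\theta(t) = L(t) + N(t),
\]
where $L(t) = G_\theta(t)*u_0 - MG_\theta(t)$ is the linear residual and $N(t) = \int_0^t \nabla G_\theta(t-s) * (u\nabla\psi)(s)\,ds$ is the nonlinear contribution, and to estimate $\nabla(-\Delta)^{-1}$ applied to each piece in $L^2$ separately.

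For the linear part, I will work on the Fourier side. Since $xu_0 \in L^1$, we have $|\hat u_0(\xi) - \hat u_0(0)| \le C|\xi|$ for every $\xi$, while $\hat u_0$ is also uniformly bounded. Hence $|\widehat{L(t)}(\xi)|/|\xi| \le C e^{-t|\xi|^\theta} \min(1, 1/|\xi|)$, and Plancherel gives $\|\nabla(-\Delta)^{-1}L(t)\|_{L^2}^2 = \int |\widehat{L(t)}(\xi)|^2/|\xi|^2\,d\xi$. Splitting the Fourier integral into low frequencies $|\xi| \le (1+t)^{-1/\theta}$ (where the $|\xi|$-bound controls the integrand) and high frequencies $|\xi| > (1+t)^{-1/\theta}$ (where the Gaussian-type factor $e^{-2t|\xi|^\theta}$ does), and rescaling by $\eta = t^{1/\theta}\xi$ to extract the $(1+t)^{-n/(2\theta)}$ factor, produces the required bound; the prefactor $t^{-\varepsilon/\theta}$ absorbs the mild near-zero singularity of the high-frequency contribution, which in the borderline case $n=2,\theta=1$ becomes logarithmic and yields the extra $\log(e+t)$ factor.

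For the nonlinear part, I split
\[
N(t) = \int_0^{t/2}\nabla G_\theta(t-s)*(u\nabla\psi)(s)\,ds + \int_{t/2}^t G_\theta(t-s)*\nabla\cdot(u\nabla\psi)(s)\,ds
\]
after the usual integration by parts in the tail. On $[0,t/2]$ the operator $\nabla(-\Delta)^{-1}\nabla$ is an $R\otimes R$-type Riesz combination bounded on every $L^p$ with $1<p<\infty$; applying it under the integral reduces the estimate to $\|G_\theta(t-s)*(u\nabla\psi)(s)\|_{L^2}$, which I control by Young's inequality and the bound $\|(u\nabla\psi)(s)\|_{L^1}\le C(1+s)^{-(n-1)/\theta}$ coming from \eqref{7} and Proposition \ref{HLSinfty}. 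On $[t/2,t]$ a similar identity $\nabla(-\Delta)^{-1}G_\theta(t-s)*\nabla\cdot F = R\cdot G_\theta(t-s)*F$ reduces matters to $\|G_\theta(t-s)*(u\nabla\psi)(s)\|_{L^2}$, which in this regime I bound using the Gagliardo–Nirenberg inequality (Lemma \ref{GN}) combined with the $(-\Delta)^{\sigma/2}$ decay from Proposition \ref{lem-dr} and the Kato–Ponce estimate (Lemma \ref{cmm}) to handle the structure of $u\nabla\psi$.

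The main obstacle is that the natural Hardy–Littlewood–Sobolev endpoint for $\|\nabla(-\Delta)^{-1}\cdot\|_{L^2}\le C\|\cdot\|_{L^{2n/(n+2)}}$ is not available when $n=2$ and is only barely consistent with the available $L^1$ integrability of $u\nabla\psi$ when $n\ge 3$. The role of $\varepsilon$ is precisely to step slightly off this endpoint: use $\|(-\Delta)^{-(1-\varepsilon)/2}f\|_{L^2}\le C\|f\|_{L^{2n/(n+2-2\varepsilon)}}$, which is admissible for every $\varepsilon>0$ and for both $n=2$ and $n\ge 3$. Interpolating between the resulting inequality and the endpoint bounds from Proposition \ref{ap}, and balancing the low- and high-frequency (respectively small- and large-time) contributions, yields the stated form $Ct^{-\varepsilon/\theta}(1+t)^{-n/(2\theta)+\varepsilon/\theta}$, with the logarithmic variant appearing only in the critical case $n=2,\theta=1$.
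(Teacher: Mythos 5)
Your skeleton coincides with the paper's: decompose via \eqref{MS} into the linear residual $G_\theta(t)*u_0-MG_\theta(t)$ and the Duhamel term, split the latter at $s=t/2$, and exploit that $\partial_k(-\Delta)^{-1}$ composed with the gradient produced by Duhamel is a zero-order Riesz combination, so that the kernel $\partial_k(-\Delta)^{-1}\partial_jG_\theta(t-s)$ obeys the same $L^p$--$L^q$ convolution bounds as $G_\theta(t-s)$ itself; your Plancherel treatment of the linear residual is a legitimate alternative to the paper's physical-space Taylor expansion. But your tail estimate is off target: you import Gagliardo--Nirenberg, Proposition \ref{lem-dr} and Kato--Ponce, which is both unnecessary (once the gradient has been absorbed into the Riesz combination no derivative remains on $u\nabla\psi$, and H\"older with \eqref{7} and Proposition \ref{HLSinfty} already gives $\|u\nabla(-\Delta)^{-1}u(s)\|_{L^r}\le C(1+s)^{-\frac{n}{\theta}(2-\frac1r)+\frac1\theta}$ for any admissible $r$) and impermissible, since Proposition \ref{HLSth2} assumes only \eqref{7} while Proposition \ref{lem-dr} requires the smoothness hypothesis \eqref{8}.

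The more serious gap is at small times, which is where the $\varepsilon$ actually matters. Your far-field bound $\int_0^{t/2}(t-s)^{-\frac{n}{2\theta}}(1+s)^{-\frac{n-1}{\theta}}ds$ behaves like $t^{1-\frac{n}{2\theta}}$ as $t\to0$, which already for $n=2$, $\theta<1$, $\varepsilon<1-\theta$ exceeds the claimed $t^{-\varepsilon/\theta}$; the paper closes this with a second, separate estimate $\int_0^t(t-s)^{-\varepsilon/\theta}\|u\nabla(-\Delta)^{-1}u(s)\|_{L^r}ds\le C$, using the $L^r\to L^2$ mapping of the kernel with $\frac1r=\frac12+\frac{\varepsilon}{n}$, and you would need to execute the analogous step rather than gesture at ``balancing''. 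Likewise, your claim that $t^{-\varepsilon/\theta}$ absorbs the high-frequency singularity of the linear residual holds only for $n=2$: for $n\ge3$ the contribution $|\mathcal{F}[u_0](0)|^2\int_{|\xi|\ge1}e^{-2t|\xi|^\theta}|\xi|^{-2}d\xi$ grows like $t^{-(n-2)/\theta}$, so no $\varepsilon<\frac{n-2}{2}$ can work (a defect the paper's own proof shares, as its $\lambda$-integration diverges there, but one you should not paper over by calling the singularity ``mild''). Finally, the $\log(e+t)$ in the case $n=2$, $\theta=1$ is a large-time effect coming from $\int_0^{t/2}(t-s)^{-1}(1+s)^{-1}ds$ in the far-field nonlinear integral, not from the small-$t$ frequency analysis of the linear part, so your attribution of it is incorrect.
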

\begin{proof}
For $k = 1,\ldots,n$, we see from \eqref{MS} that
\[
\begin{split}
	&\partial_k (-\Delta)^{-1} (u - MG_\theta)\\
	=&
	\partial_k (-\Delta)^{-1} (G_\theta (t) * u_0 - M G_\theta)
	+
	\partial_k (-\Delta)^{-1} \int_0^t
		G_\theta (t-s) * \nabla \cdot (u\nabla (-\Delta)^{-1}u) (s)
	ds\\
	=&
	\int_{\mathbb{R}^n} \int_0^1
		\partial_k (-\Delta)^{-1} \nabla G_\theta (t,x-y+\lambda y)
		\cdot (-y) u_0 (y)
	d\lambda dy\\
	&+
	\int_0^t
		\partial_k (-\Delta)^{-1} \nabla G_\theta (t-s) * (u\nabla(-\Delta)^{-1}u) (s)
	ds.
\end{split}
\]
Here we used Taylor's theorem.
By Lemma \ref{decay-lin}, we see that
\[
\begin{split}
	\biggl\|
		\int_{\mathbb{R}^2} \int_0^1
		\partial_k (-\Delta)^{-1} \nabla G_\theta (t,x-y+\lambda y)
		\cdot (-y) u_0 (y)
	d\lambda dy
	\biggr\|_{L^2 (\mathbb{R}^n)}
	\le
	C t^{-\frac{n}{2\theta}} \left\| y u_0 \right\|_{L^1 (\mathbb{R}^n)}.
\end{split}
\]
Since $u_0 \in L^1 (\mathbb{R}^n, (1+|x|^2) dx) \cap L^\infty (\mathbb{R}^n) \subset L^r (\mathbb{R}^n, (1+|x|) dx)$ for $\frac1r = \frac12 + \frac{\varepsilon}{n}$, we obtain that
\[
\begin{split}
	\biggl\|
		\int_{\mathbb{R}^2} \int_0^1
		\partial_k (-\Delta)^{-1} \nabla G_\theta (t,x-y+\lambda y)
		\cdot (-y) u_0 (y)
	d\lambda dy
	\biggr\|_{L^2 (\mathbb{R}^n)}
	\le
	C t^{-\frac\varepsilon\theta} \left\| y u_0 \right\|_{L^r (\mathbb{R}^n)}.
\end{split}
\]
Thus
\[
\begin{split}
	\biggl\|
		\int_{\mathbb{R}^2} \int_0^1
		\partial_k (-\Delta)^{-1} \nabla G_\theta (t,x-y+\lambda y)
		\cdot (-y) u_0 (y)
	d\lambda dy
	\biggr\|_{L^2 (\mathbb{R}^n)}
	\le
	C t^{-\frac\varepsilon\theta} (1+t)^{-\frac{n}{2\theta}+\frac\varepsilon\theta}.
\end{split}
\]
Similarly, we obtain that
\[
\begin{split}
	&\biggl\|
		\int_0^t
			\partial_k (-\Delta)^{-1} \nabla G_\theta (t-s) * \left( u\nabla(-\Delta)^{-1}u \right) (s) ds
	\biggr\|_{L^2 (\mathbb{R}^n)}\\
	\le&
	C \int_0^{t/2}
		(t-s)^{-\frac{n}{2\theta}}
		\left\| u\nabla(-\Delta)^{-1}u (s) \right\|_{L^1 (\mathbb{R}^n)}
	ds
	+
	C \int_{t/2}^t
		(t-s)^{-\frac\varepsilon\theta}
		\left\| u\nabla(-\Delta)^{-1}u (s) \right\|_{L^r (\mathbb{R}^n)}
	ds\\
	\le&
	C \int_0^{t/2}
		(t-s)^{-\frac{n}{2\theta}}
		(1+s)^{-\frac{n-1}\theta}
	ds
	+
	C \int_{t/2}^t
		(t-s)^{-\frac\varepsilon\theta}
		(1+s)^{-\frac{n}{2\theta} - \frac{n-1}\theta + \frac\varepsilon\theta}
	ds\\
	\le&
	\left\{
 	\begin{array}{lr}
		Ct^{-\frac{n}{2\theta}}
		&
		(n \ge 3~ \text{or}~ \theta < 1)\,\\
		C t^{-1} \log (e+t)
		&
		(n = 2~ \text{and}~ \theta = 1)
	\end{array}
	\right.
\end{split}
\]
and
\[
\begin{split}
	&\biggl\|
		\int_0^t
			\partial_k (-\Delta)^{-1} \nabla G_\theta (t-s) * \left( u\nabla(-\Delta)^{-1}u \right) (s) ds
	\biggr\|_{L^2 (\mathbb{R}^n)}\\
	\le&
	C \int_0^t
		(t-s)^{-\frac\varepsilon\theta}
		\left\| u \nabla (-\Delta)^{-1} u(s) \right\|_{L^r (\mathbb{R}^n)}
	ds
	\le
	C.
\end{split}
\]
Therefore we complete the proof.
\end{proof}

Lemma \ref{HLS}, and Propositions \ref{lem-wt}, \ref{ap} and \ref{HLSth2} affirm \eqref{coef2}
when $\theta < n-1$.\\

\noindent
{\it Proof of Theorem \ref{th2}.}
We split the nonlinear term on \eqref{MS} as follows:
\begin{equation}\label{th2bs}
\begin{split}
	&\int_0^t
		\nabla G_\theta (t-s) * (u\nabla(-\Delta)^{-1}u) (s)
	ds\\
	=&
	M^2 J(t)
	+
	\int_0^t
		\nabla G_\theta (t-s) *  \left( u \nabla (-\Delta)^{-1} u - M^2 G_\theta \nabla (-\Delta)^{-1} G_\theta \right) (s)
	ds\\
	=&
	M^2 J(t)
	+
	\sum_{|\beta| = 1} \nabla^\beta \nabla G_\theta (t,x) \cdot \int_0^\infty \int_{\mathbb{R}^2}
		(-y)^\beta \left( u \nabla (-\Delta)^{-1} u - M^2 G_\theta \nabla (-\Delta)^{-1} G_\theta \right) (s,y) dyds\\
	&+ \tilde{r}_1 (t) + \tilde{r}_2 (t) + \tilde{r}_3 (t),
\end{split}
\end{equation}
where
\[
\begin{split}
	\tilde{r}_1 (t)
	=&
	\int_0^{t/2} \int_{\mathbb{R}^2}
		\bigl( \nabla G_\theta (t-s,x-y) - \sum_{|\beta|=1} \nabla G_\theta (t,x) (-y)^\beta \bigr)\\
		&\hspace{15mm}\cdot
		 \left( u \nabla (-\Delta)^{-1} u - M^2 G_\theta \nabla (-\Delta)^{-1} G_\theta \right) (s,y)
	dyds,\\
	\tilde{r}_2 (t)
	=&
	\int_{t/2}^t
		G_\theta (t-s)
		* \nabla\cdot \left( u \nabla (-\Delta)^{-1} u - M^2 G_\theta \nabla (-\Delta)^{-1} G_\theta \right) (s)
	ds,\\
	\tilde{r}_3 (t)
	=&
	-\sum_{|\beta|=1} \nabla^\beta \nabla G_\theta (t) \cdot \int_{t/2}^\infty \int_{\mathbb{R}^2}
		(-y)^\beta  \left( u \nabla (-\Delta)^{-1} u - M^2 G_\theta \nabla (-\Delta)^{-1} G_\theta \right) (s,y)
	dyds.
\end{split}
\]
Here we used the relation $\int_{\mathbb{R}^2} (u\nabla(-\Delta)^{-1}u - M^2 G_\theta \nabla (-\Delta)^{-1} G_\theta) (s,y) dy = 0$ for $\tilde{r}_1$.
For some $R(t) > 0,~ R(t) = o (t^{1/\theta})$ as $t\to\infty$, by the similar argument as in the proof of Theorem \ref{th1}, we divide $\tilde{r}_1$ into
\begin{equation}\label{th2bs3pre}
\begin{split}
	\tilde{r}_1 (t)
	=&
	\sum_{|\beta| = 2} \int_0^{t/2} \int_{|y| \le R(t)} \int_0^1
		\frac{\nabla^\beta \nabla G_\theta (t-s,x-y+\lambda y)}{\beta!}\\
		&\hspace{15mm}\cdot
		(-\lambda) (-y)^\beta \left( u \nabla (-\Delta)^{-1} u - M^2 G_\theta \nabla (-\Delta)^{-1} G_\theta \right) (s,y)
	d\lambda dyds\\
	&+
	\sum_{|\beta|=1} \int_0^{t/2} \int_{|y| > R(t)}
		\left( \int_0^1 \nabla^\beta \nabla G_\theta (t-s,x-y+\lambda y) d\lambda + \nabla^\beta \nabla G_\theta (t-s,x) \right)\\
		&\hspace{15mm}\cdot
		 (-y)^\beta \left( u \nabla (-\Delta)^{-1} u - M^2 G_\theta \nabla (-\Delta)^{-1} G_\theta \right) (s,y)
	dyds\\
	&+
	\sum_{|\beta|=1} \int_0^{t/2} \int_{\mathbb{R}^2} \int_0^1
		\partial_t \nabla^\beta \nabla G_\theta (t-s+\lambda s,x)\\
		&\hspace{15mm}\cdot 
		(-s) (-y)^\beta \left( u \nabla (-\Delta)^{-1} u - M^2 G_\theta \nabla (-\Delta)^{-1} G_\theta \right) (s,y)
	d\lambda dyds.
\end{split}
\end{equation}
Since $u\nabla(-\Delta)^{-1}u - M^2 G_\theta \nabla (-\Delta)^{-1} G_\theta = u \nabla (-\Delta)^{-1} (u-MG_\theta) + M (u-MG_\theta) \nabla (-\Delta)^{-1} G_\theta$, we see from \eqref{7}, and Propositions \ref{ap} and \ref{HLSth2} that
\[
\begin{split}
	&\bigl\| y_j \bigl( u \nabla (-\Delta)^{-1} u - M^2 G_\theta \nabla (-\Delta)^{-1} G_\theta \bigr) \bigr\|_{L^1 (\mathbb{R}^2)}\\
	\le&
	\bigl\| y_j u \bigr\|_{L^2 (\mathbb{R}^2)} \bigl\| \nabla (-\Delta)^{-1} (u - MG_\theta) \bigr\|_{L^2 (\mathbb{R}^2)}
	+
	\bigl\| u - M G_\theta \bigr\|_{L^1 (\mathbb{R}^2)} \bigl\| y_j \nabla (-\Delta)^{-1} G_\theta \bigr\|_{L^\infty (\mathbb{R}^2)}\\
	\le&
	C s^{-\frac\varepsilon\theta} (1+s)^{-\frac1\theta + \frac\varepsilon\theta} \log (e+s).
\end{split}
\]
Lemma \ref{decay-lin} together with the above inequality provides that
\[
\begin{split}
	&\biggl\|
		\sum_{|\beta| = 2} \int_0^{t/2} \int_{|y| \le R(t)} \int_0^1
			\frac{\nabla^\beta \nabla G_\theta (t-s,x-y+\lambda y)}{\beta!}\\
			&\hspace{15mm}\cdot
			(-\lambda) (-y)^\beta \left( u \nabla (-\Delta)^{-1} u - M^2 G_\theta \nabla (-\Delta)^{-1} G_\theta \right) (s,y)
		d\lambda dyds
	\biggr\|_{L^q (\mathbb{R}^2)}\\
	\le&
	C R(t) \sum_{|\beta| = 1} \int_0^{t/2}
		(t-s)^{-\frac2\theta (1-\frac1q) - \frac3\theta}
		\bigl\| (-y)^\beta \left( u \nabla (-\Delta)^{-1} u - M^2 G_\theta \nabla (-\Delta)^{-1} G_\theta \right) (s) \bigr\|_{L^1 (\mathbb{R}^2)}
	ds\\
	\le&
	C R(t) \int_0^{t/2}
		(t-s)^{-\frac2\theta (1-\frac1q) - \frac3\theta}
		s^{-\frac\varepsilon\theta} (1+s)^{-\frac1\theta + \frac\varepsilon\theta} \log (e+s)\,
	ds
	=
	o \bigl( t^{-\frac2\theta (1-\frac1q) - \frac2\theta} \bigr)
\end{split}
\]
as $t\to\infty$.
In a similar manner to above, we have that
\[
\begin{split}
	&\biggl\| \sum_{|\beta|=1} \int_0^{t/2} \int_{\mathbb{R}^2}
		\left( \int_0^1 \nabla^\beta \nabla G_\theta (t-s,x-y+\lambda y) d\lambda + \nabla^\beta \nabla G_\theta (t-s,x) \right)\\
		&\hspace{15mm}\cdot
		 (-y)^\beta \left( u \nabla (-\Delta)^{-1} u - M^2 G_\theta \nabla (-\Delta)^{-1} G_\theta \right) (s,y)
	dyds \biggr\|_{L^q (\mathbb{R}^2)}\\
	\le&
	C \sum_{|\beta| = 1} \int_0^{t/2}
		(t-s)^{-\frac2\theta (1-\frac1q) - \frac2\theta}
		\bigl\| (-y)^\beta \left( u \nabla (-\Delta)^{-1} u - M^2 G_\theta \nabla (-\Delta)^{-1} G_\theta \right) (s) \bigr\|_{L^1 (\mathbb{R}^2)}
	ds\\
	\le&
	C \int_0^{t/2}
		(t-s)^{-\frac2\theta (1-\frac1q) - \frac2\theta}
		s^{-\frac\varepsilon\theta} (1+s)^{-\frac1\theta + \frac\varepsilon\theta} \log (e+s)\,
	ds
	=
	O \bigl( t^{-\frac2\theta (1-\frac1q) - \frac2\theta} \bigr).
\end{split}
\]
Hence Lebesgue's monotone convergence theorem yields that
\[
\begin{split}
	&\biggl\| \sum_{|\beta|=1} \int_0^{t/2} \int_{|y| \ge R(t)}
		\left( \int_0^1 \nabla^\beta \nabla G_\theta (t-s,x-y+\lambda y) d\lambda + \nabla^\beta \nabla G_\theta (t-s,x) \right)\\
		&\hspace{15mm}\cdot
		 (-y)^\beta \left( u \nabla (-\Delta)^{-1} u - M^2 G_\theta \nabla (-\Delta)^{-1} G_\theta \right) (s,y)
	dyds \biggr\|_{L^q (\mathbb{R}^2)}
	=
	o \bigl( t^{-\frac2\theta (1-\frac1q) - \frac2\theta} \bigr)
\end{split}
\]
as $t\to\infty$.
Similarly we obtain that
\[
\begin{split}
	&\biggl\|
		\sum_{|\beta|=1} \int_0^{t/2} \int_{\mathbb{R}^2} \int_0^1
			\partial_t \nabla^\beta \nabla G_\theta (t-s+\lambda s,x)\\
			&\hspace{15mm}\cdot 
			(-s) (-y)^\beta \left( u \nabla (-\Delta)^{-1} u - M^2 G_\theta \nabla (-\Delta)^{-1} G_\theta \right) (s,y)
		d\lambda dyds
	\biggr\|_{L^q (\mathbb{R}^2)}\\
	\le&
	C \sum_{|\beta| = 1} \int_0^{t/2}
		(t-s)^{-\frac2\theta (1-\frac1q) - \frac2\theta - 1}
		s \bigl\| (-y)^\beta \left( u \nabla (-\Delta)^{-1} u - M^2 G_\theta \nabla (-\Delta)^{-1} G_\theta \right) (s) \bigr\|_{L^1 (\mathbb{R}^2)}
	ds\\
	\le&
	C  \int_0^{t/2}
		(t-s)^{-\frac2\theta (1-\frac1q) - \frac2\theta - 1}
		s^{1-\frac\varepsilon\theta} (1+s)^{-\frac1\theta + \frac\varepsilon\theta} \log (e+s)
	ds
	=
	o \bigl( t^{-\frac2\theta (1-\frac1q) - \frac2\theta} \bigr)
\end{split}
\]
as $t\to\infty$.
Therefore we conclude that
\begin{equation}\label{th2bs3}
	\| \tilde{r}_1 (t) \|_{L^q (\mathbb{R}^2)}
	=
	o \bigl( t^{-\frac2\theta (1-\frac1q) - \frac2\theta} \bigr)
\end{equation}
as $t\to\infty$.
For $1 \le q < \infty$, we see that
\[
\begin{split}
	\| \tilde{r}_2 (t) \|_{L^q (\mathbb{R}^2)}
	\le&
	C \int_{t/2}^t
		\biggl\{
 			\bigl\| \nabla u \bigr\|_{L^{2q} (\mathbb{R}^2)}
			\bigl\| \nabla (-\Delta)^{-1} (u-MG_\theta) \bigr\|_{L^{2q} (\mathbb{R}^2)}
			+
			\bigl\| u \bigr\|_{L^{2q} (\mathbb{R}^2)}
			\bigl\| u-MG_\theta \bigr\|_{L^{2q} (\mathbb{R}^2)}\\
			&+
			\bigl\| \nabla (u-MG_\theta) \bigr\|_{L^{2q} (\mathbb{R}^2)}
			\bigl\| \nabla (-\Delta)^{-1} G_\theta \bigr\|_{L^{2q} (\mathbb{R}^2)}
			+
			\bigl\| u-MG_\theta \bigr\|_{L^{2q} (\mathbb{R}^2)}
			\bigl\| G_\theta \bigr\|_{L^{2q} (\mathbb{R}^2)}
		\biggr\}	
	ds.
\end{split}
\]
From Proposition \ref{HLSth2}, or Proposition \ref{ap} together with Hardy-Littlewood-Sobolev's inequality leads that
\[
	\bigl\|
		\nabla (-\Delta)^{-1} (u-MG_\theta)
	\bigr\|_{L^{2q} (\mathbb{R}^2)}
	\le
	C t^{-\frac2\theta (1-\frac1{2q})} \log (e+t).
\]
We choose $\sigma = 1 - \frac1q$, then, by the Sobolev inequality and Propositions \ref{lem-dr} and \ref{tl2}, we have that
\[
\begin{split}
	\bigl\| \nabla u \bigr\|_{L^{2q} (\mathbb{R}^2)}
	\le&
	C \bigl\| \nabla (-\Delta)^{\sigma/2} u \bigr\|_{L^2 (\mathbb{R}^2)}
	\le
	C t^{-\frac2{\theta} (1-\frac1{2q}) - \frac1\theta},\\
	\bigl\| \nabla (u-M G_\theta) \bigr\|_{L^{2q} (\mathbb{R}^2)}
	\le&
	C \bigl\| \nabla (-\Delta)^{\sigma/2} (u-MG_\theta) \bigr\|_{L^2 (\mathbb{R}^2)}
	\le
	C t^{-\frac2{\theta} (1-\frac1{2q}) - \frac2\theta}.
\end{split}
\]
When $q = \infty$, we obtain that
\[
\begin{split}
	&\bigl\|
		\tilde{r}_2 (t)
	\bigr\|_{L^\infty (\mathbb{R}^2)}\\
	\le&
	C \int_{t/2}^t
		(t-s)^{-\frac2{\theta p}}
		\biggl\{
 			\bigl\| \nabla u \bigr\|_{L^{2p} (\mathbb{R}^2)}
			\bigl\| \nabla (-\Delta)^{-1} (u-MG_\theta) \bigr\|_{L^{2p} (\mathbb{R}^2)}
			+
			\bigl\| u \bigr\|_{L^{2p} (\mathbb{R}^2)}
			\bigl\| u-MG_\theta \bigr\|_{L^{2p} (\mathbb{R}^2)}\\
			&+
			\bigl\| \nabla (u-MG_\theta) \bigr\|_{L^{2p} (\mathbb{R}^2)}
			\bigl\| \nabla (-\Delta)^{-1} G_\theta \bigr\|_{L^{2p} (\mathbb{R}^2)}
			+
			\bigl\| u-MG_\theta \bigr\|_{L^{2p} (\mathbb{R}^2)}
			\bigl\| G_\theta \bigr\|_{L^{2p} (\mathbb{R}^2)}
		\biggr\}	
	ds
\end{split}
\]
for some $2/\theta < p < \infty$.
Hence we can treat $\| \tilde{r}_2 (t) \|_{L^\infty (\mathbb{R}^2)}$ in a similar manner to above.
Thus we conclude that
\begin{equation}\label{th2bs4}
\begin{split}
	\| \tilde{r}_2 (t) \|_{L^q (\mathbb{R}^2)}
	\le&
	\int_{t/2}^t
		s^{-\frac2\theta (1-\frac1q) - \frac3\theta} \log (e+s)
	ds
	=
	o \bigl( t^{-\frac2\theta (1-\frac1q) - \frac2\theta} \bigr)
\end{split}
\end{equation}
as $t \to \infty$ for $1 \le q \le \infty$.
Propositions \ref{lem-wt}, \ref{ap} and \ref{HLSth2} give that
\[
\begin{split}
	&\int_{t/2}^\infty \int_{\mathbb{R}^2}
		\left|
			y_j
			\bigl( u\nabla (-\Delta)^{-1} u - M^2 G_\theta \nabla (-\Delta)^{-1} G_\theta \bigr)
		\right|
	dyds\\
	\le&
	\int_{t/2}^\infty
		\bigl\{
		\bigl\| y_j u \bigr\|_{L^2 (\mathbb{R}^2)}
		\bigl\| \nabla (-\Delta)^{-1} (u-MG_\theta) \bigr\|_{L^2 (\mathbb{R}^2)}
		+
		M\bigl\| u-MG_\theta \bigr\|_{L^1 (\mathbb{R}^2)}
		\bigl\| y_j \nabla (-\Delta)^{-1} G_\theta \bigr\|_{L^\infty (\mathbb{R}^2)}
		\bigr\}
	ds\\
	\le&
	C \int_{t/2}^\infty
		s^{-1/\theta} \log (e+s)
	ds
\end{split}
\]
and
\begin{equation}\label{th2bs5}
	\| \tilde{r}_3 (t) \|_{L^q (\mathbb{R}^2)}
	=
	o \bigl( t^{-\frac2\theta (1-\frac1q) - \frac2\theta} \bigr)
\end{equation}
as $t \to \infty$ for $1 \le q \le \infty$.
Applying \eqref{th2bs3}--\eqref{th2bs5} to \eqref{th2bs}, we complete the proof.
\hfill$\square$

\subsection{Proof of Theorem \ref{th3}}
Lemma \ref{ap-lin} provides the estimate for the linear term on \eqref{MS}.
We divide the nonlinear term into
\[
\begin{split}
	&\int_0^t
		\nabla P (t-s) * (u\nabla (-\Delta)^{-1}u) (s)
	ds\\
	=&
	M^2 \int_0^t
		\nabla P (t-s) * (P\nabla (-\Delta)^{-1}P) (1+s)
	ds\\
	&+
	\int_0^t
		\nabla P (t-s) * \left( u\nabla(-\Delta)^{-1}u(s) - M^2 P \nabla(-\Delta)^{-1}P (1+s) \right)
	ds\\
	=&
	M^2 \tilde{J} (t)
	+
	M^2 \sum_{|\beta| = 1} \nabla^\beta \nabla P (t) \cdot \int_0^{t/2} \int_{\mathbb{R}^3}
		(-y)^\beta \left( P \nabla (-\Delta)^{-1} P \right) (1+s,y)
	dyds\\
	&+
	\sum_{|\beta| = 1} \nabla^\beta \nabla P (t) \cdot \int_0^\infty \int_{\mathbb{R}^3}
		(-y)^\beta \left( u\nabla(-\Delta)^{-1}u (s,x) - P\nabla(-\Delta)^{-1}P (1+s,y) \right)
	dyds\\
	&
	+ \varrho_1 (t) + \cdots + \varrho_5 (t),
\end{split}
\]
where
\[
\begin{split}
	\varrho_1 (t)
	=&
	\int_0^{t/2} \int_{\mathbb{R}^3}
		\biggl( \nabla P (t-s,x-y) - \sum_{|\beta| \le 1} \nabla^\beta \nabla P (t,x) (-y)^\beta \biggr)\\
		&\hspace{5mm}\cdot
		\left( u\nabla(-\Delta)^{-1}u (s,x) - M^2 P\nabla(-\Delta)^{-1}P (1+s,y) \right)
	dyds,\\
	\varrho_2 (t)
	=&
	\int_{t/2}^t P (t-s) * \nabla \cdot \left( u\nabla(-\Delta)^{-1}u (s,x) - M^2 P\nabla(-\Delta)^{-1}P (1+s,y) \right)
	dyds,\\
	\varrho_3 (t)
	=&
	- \sum_{|\beta|=1} \nabla^\beta \nabla P (t) \cdot \int_{t/2}^\infty \int_{\mathbb{R}^3}
		(-y)^\beta \left( u\nabla(-\Delta)^{-1}u (s,x) - M^2 P\nabla(-\Delta)^{-1}P (1+s,y) \right)
	dyds,\\
	\varrho_4 (t)
	=&
	M^2 \int_0^{t/2} \int_{\mathbb{R}^3}
		\biggl( \nabla P (t-s,x-y) - \sum_{|\beta| \le 1} \nabla^\beta \nabla P (t,x) (-y)^\beta \biggr)\\
		&\hspace{5mm}\cdot
		\left( P \nabla (-\Delta)^{-1} P (1+s,y) - P \nabla (-\Delta)^{-1} P (s,y) \right)
	dyds,\\
	\varrho_5 (t)
	=&
	M^2 \int_{t/2}^t
		P(t-s) * \nabla \cdot \left( P \nabla (-\Delta)^{-1} P (1+s,y) - P \nabla (-\Delta)^{-1} P (s,y) \right)
	dyds\\
	&- M^2 \sum_{|\beta|=1} \nabla^\beta \nabla P (t) \cdot \int_{t/2}^t \int_{\mathbb{R}^3}
		(-y)^\beta \left( P \nabla (-\Delta)^{-1} P (1+s,y) - P \nabla (-\Delta)^{-1} P (s,y) \right)
	dyds.
%
\end{split}
\]
We note that
\[
\begin{split}
	&\sum_{|\beta| = 1} \nabla^\beta \nabla P (t) \cdot \int_0^{t/2} \int_{\mathbb{R}^3}
		(-y)^\beta \left( P \nabla (-\Delta)^{-1} P \right) (1+s,y)
	dyds\\
	=&
	-\frac13 \Delta P (t) \int_0^{t/2} (1+s)^{-1} ds \int_{\mathbb{R}^3} y \cdot (P\nabla(-\Delta)^{-1}P) (1,y) dy
	= \tilde{K} (t),
\end{split}
\]
since $P \partial_j (-\Delta)^{-1} P$ is an odd function in $x_j$.
The same argument as in the proof of Theorem \ref{th2} leads that
\[
	\bigl\| \varrho_1 (t) \bigr\|_{L^q (\mathbb{R}^3)}
	+
	\bigl\| \varrho_2 (t) \bigr\|_{L^q (\mathbb{R}^3)}
	+
	\bigl\| \varrho_3 (t) \bigr\|_{L^q (\mathbb{R}^3)}
	=
	o \bigl( t^{-3 (1-\frac1q) - 2} \bigr)
\]
as $t\to\infty$ for $1 \le q \le \infty$.
Lemma \ref{decay-lin} together with Taylor's theorem describes that
\[
\begin{split}
	\varrho_4 (t)
	=&
	\sum_{|\beta| = 2} \int_0^{t/2} \int_{\mathbb{R}^3} \int_0^1 \int_0^1
		\frac{\nabla^\beta \nabla P (t-s,x-y + \lambda y)}{\beta!} \lambda (-y)^\beta\\
		&\hspace{5mm}\cdot
		\partial_t \left( P \nabla (-\Delta)^{-1} P \right) (s+\mu,y)
	d\mu d\lambda dyds\\
	&+
	\sum_{|\beta|=1} \int_0^{t/2} \int_{\mathbb{R}^3} \int_0^1 \int_0^1
		\partial_t \nabla^\beta \nabla P (t-s+\lambda s,x) (-s) (-y)^\beta\\
		&\hspace{5mm}\cdot
		\partial_t \left( P \nabla (-\Delta)^{-1} P \right) (s+\mu,y)
	d\mu d\lambda dyds
\end{split}
\]
and
\[
	\bigl\| \varrho_4 (t) \bigr\|_{L^q (\mathbb{R}^3)}
	\le
	C \int_0^{t/2} \int_0^1
		(t-s)^{-3 (1-\frac1q) - 3}
		(s+\mu)^{-1}
	d\mu ds.
\]
Similarly we obtain that
\[
\begin{split}
	\bigl\| \varrho_5 (t) \bigr\|_{L^q (\mathbb{R}^3)}
	\le&
	C \int_{t/2}^t \int_0^1
		(s+\mu)^{-3(1-\frac1q) -4}
	d\mu ds
	+
	C t^{-3(1-\frac1q) - 2} \int_{t/2}^t \int_0^1
		(s+\mu)^{-2}
	d\mu ds.
\end{split}
\]
Therefore $\varrho_4$ and $\varrho_5$ fulfill that
\[
	\bigl\| \varrho_4 (t) \bigr\|_{L^q (\mathbb{R}^3)}
	+
	\bigl\| \varrho_5 (t) \bigr\|_{L^q (\mathbb{R}^3)}
	=
	o \bigl( t^{-3(1-\frac1q)-2} \bigr)
\]
as $t\to\infty$ for $1 \le q \le \infty$.
Therefore we derive the assertion.
\hfill$\square$
\subsection{Proof of Theorem \ref{th4}}
Before proving Theorem \ref{th4} we prepare the following proposition.
\begin{proposition}\label{tl4}
Upon the assumption of Theorem \ref{th4},
\begin{equation}\label{tl4-1}
\begin{split}
	&\bigl\|
		u(t) - MP(t) - m\cdot \nabla P(1+t) - M^2 J(1+t)
	\bigr\|_{L^p (\mathbb{R}^2)}\\
	\le&
	Ct^{-2(1-\frac1p)} (1+t)^{-2} \left( \log (e+t) \right)^2
\end{split}
\end{equation}
for $t > 0$ and $1 \le p \le \infty$.
Moreover, for $\sigma > 0$, there exist positive constants $C$ and $T$ such that
\begin{equation}\label{tl4-2}
\begin{split}
	&\bigl\|
		(-\Delta)^{\sigma/2} \left( u(t) - M P (t) - m \cdot \nabla P(1+t) - M^2 J (1+t) \right)
	\bigr\|_{L^2 (\mathbb{R}^2)}\\
	\le&
	C t^{-1 - \sigma} (1+t)^{-2} \left( \log (e+t) \right)^2
\end{split}
\end{equation}
for $t \ge T$.
\end{proposition}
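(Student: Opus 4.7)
The plan is to bootstrap from Proposition \ref{ap}, which for $n=2,\ \theta=1$ gives $\|v(t)\|_{L^q(\mathbb{R}^2)} \le C t^{-2(1-1/q)}(1+t)^{-1}\log(e+t)$ with $v := u - MP$. Using the Duhamel formula \eqref{MS}, I decompose
\[
	u(t) - MP(t) - m\cdot\nabla P(1+t) - M^2 J(1+t) = A(t) + B(t),
\]
where $A(t) := P(t)*u_0 - MP(t) - m\cdot\nabla P(1+t)$ collects the linear remainder and $B(t)$ collects the two nonlinear Duhamel integrals of \eqref{MS} minus $M^2 J(1+t)$. I split $A$ further as $[P(t)*u_0 - MP(t) - m\cdot\nabla P(t)] + m\cdot[\nabla P(t) - \nabla P(1+t)]$: the first bracket is bounded by $Ct^{-2(1-1/p)-2}$ via Lemma \ref{ap-lin} with $N=1$, using the hypothesis $u_0\in L^1(\mathbb{R}^2,(1+|x|^2)dx)$; the second, via $\partial_t P + (-\Delta)^{1/2}P = 0$, equals $m\cdot\int_0^1 (-\Delta)^{1/2}\nabla P(t+\tau)\,d\tau$ and has the same order by Lemma \ref{decay-lin}.

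For $B(t)$, I expand the flux with $u = MP + v$:
\[
	u\nabla(-\Delta)^{-1}u = M^2 P\nabla(-\Delta)^{-1}P + M\bigl(P\nabla(-\Delta)^{-1}v + v\nabla(-\Delta)^{-1}P\bigr) + v\nabla(-\Delta)^{-1}v.
\]
Since the two Duhamel integrals applied to $P\nabla(-\Delta)^{-1}P$ reproduce $J(t)$ by \eqref{J}, one has $B(t) = M^2(J(t) - J(1+t))$ plus the Duhamel integrals of the cross and quadratic fluxes. The scaling $J(t,x) = t^{-3}J(1,x/t)$, a special case of \eqref{scJ}, yields $\|J(t) - J(1+t)\|_{L^p} \le C t^{-2(1-1/p)-2}$ for $t\ge 1$. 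The cross and quadratic Duhamel pieces are handled by the same split at $s=t/2$ as in the proof of Proposition \ref{ap}, combining Lemma \ref{decay-lin} with Proposition \ref{ap} for the $L^q$ decay of $v$, Proposition \ref{HLSth2} for $\nabla(-\Delta)^{-1}v$ in $L^2$, Proposition \ref{HLSinfty} for $\nabla(-\Delta)^{-1}u$ in $L^\infty$, and Proposition \ref{lem-wt} for moments; each piece gains one power of $(1+s)^{-1}$ relative to the proof of Proposition \ref{ap}, with one extra logarithm coming from $v$ itself and a second from the critical time integration against $P\nabla(-\Delta)^{-1}P$.

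For \eqref{tl4-2}, I apply $(-\Delta)^{\sigma/2}$ to the same decomposition and estimate in $L^2$. The linear part is immediate from Lemma \ref{decay-lin}. For the nonlinear part, I move the factor $\nabla(-\Delta)^{\sigma/2}$ onto the kernel when $s\in[0,t/2]$ and onto the flux when $s\in[t/2,t]$; the Kato-Ponce commutator estimate (Lemma \ref{cmm}), combined with Proposition \ref{lem-dr} for the Sobolev decay of $u$, Proposition \ref{tl2} for the Sobolev decay of $v$, Lemma \ref{GN} to trade fractional regularity for $L^p$-integrability, and Proposition \ref{HLSinfty} for the potential, delivers the $L^2$ bound with the same $(\log(e+t))^2$ factor. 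The main obstacle in both parts is the simultaneous bookkeeping of the two logarithms without producing a third: this forces one to split each time integration at several intermediate scales and to exploit the explicit scaling of $J(s)$ and $P\nabla(-\Delta)^{-1}P(s)$ rather than relying on generic interpolation.
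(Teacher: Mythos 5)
Your decomposition (linear remainder, nonlinear Duhamel difference, and $M^2\bigl(J(t)-J(1+t)\bigr)$) coincides with the paper's, and both the linear part and the $J$-difference are handled adequately. The genuine gap is in the far-field nonlinear piece $\int_0^{t/2}\nabla P(t-s)*\bigl(u\nabla(-\Delta)^{-1}u-M^2P\nabla(-\Delta)^{-1}P\bigr)(s)\,ds$. Your plan is that "each piece gains one power of $(1+s)^{-1}$ relative to the proof of Proposition \ref{ap}"; but improving the $s$-decay of the flux from $(1+s)^{-1}$ to $(1+s)^{-2}\log(e+s)$ only makes the time integral convergent and still yields $Ct^{-2(1-\frac1q)-1}$ from the kernel factor $\|\nabla P(t-s)\|$ --- a full power of $t$ short of the target $t^{-2(1-\frac1q)-2}$. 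The missing power must come from the kernel, not from the flux. The paper obtains it by exploiting the cancellation $\int_{\mathbb{R}^2}\bigl(u\nabla(-\Delta)^{-1}u-M^2P\nabla(-\Delta)^{-1}P\bigr)\,dy=0$ (skew-adjointness of $\nabla(-\Delta)^{-1}$) together with a first-order Taylor expansion of the kernel about $y=0$: this replaces $\nabla P(t-s,\cdot-y)$ by $\nabla^\beta\nabla P$ with $|\beta|=1$, gaining $(t-s)^{-1}$ at the cost of a moment weight $(-y)^\beta$ on the flux, and the weighted flux is then bounded by $Cs^{-\varepsilon}(1+s)^{-1+\varepsilon}\log(e+s)$ via Propositions \ref{lem-wt} and \ref{HLSth2}. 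You cite Proposition \ref{lem-wt} "for moments" but never say where moments enter; without the vanishing-integral/Taylor step they cannot enter, and the claimed rate is unreachable.

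The same defect carries over to your treatment of \eqref{tl4-2} on $[0,t/2]$: moving $\nabla(-\Delta)^{\sigma/2}$ onto the kernel gives only $(t-s)^{-2-\sigma}$ against $\|F(s)\|_{L^1}$, hence $t^{-2-\sigma}$, whereas the target is $Ct^{-3-\sigma}(\log(e+t))^2$; the Taylor/moment step is again indispensable. Your closing remark about "splitting each time integration at several intermediate scales" does not supply this mechanism --- no subdivision of the $s$-interval can recover a power of $t^{-1}$ that is absent from the integrand.
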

\begin{proof}
We show \eqref{tl4-2}.
From \eqref{MS} we see that
\[
\begin{split}
	&u(t) - M P (t) - m \cdot \nabla P(1+t) - M^2 J (1+t)\\
	=&
	P(t) * u_0 - MP (t) - m\cdot\nabla P (1+t)
	+
	\int_0^t P(t-s) * \nabla \cdot (u\nabla(-\Delta)^{-1}u) (s) ds
	-
	M^2 J(1+t).
\end{split}
\]
Since
\[
\begin{split}
	&P(t) * u_0 - MP (t) - m\cdot\nabla P (1+t)\\
	=&
	P(t) * u_0 - MP (t) - m\cdot\nabla P (t)
	+
	m \cdot \nabla \left( P(t) - P (1+t) \right)\\
	=&
	\sum_{|\alpha| = 2} \int_{\mathbb{R}^2} \int_0^1 \frac{\nabla^\alpha P (t,x-y+\lambda y)}{\alpha!} \lambda (-y)^\alpha u_0 (y) d\lambda dy
	-
	m \cdot \int_0^1 \partial_t \nabla P(t+\mu) d\mu,
\end{split}
\]
we have for $\sigma > 0$ that
\[
\begin{split}
	\left\| (-\Delta)^{\sigma/2} \left( P(t) * u_0 - MP (t) - m\cdot\nabla P (1+t) \right) \right\|_{L^2 (\mathbb{R}^2)}
	\le
	C t^{-3 - \sigma}.
\end{split}
\]
From \eqref{J}, we obtain that
\[
\begin{split}
	&(-\Delta)^{\sigma/2} \biggl( \int_0^t
		P(t-s) * \nabla \cdot (u\nabla(-\Delta)^{-1}u) (s)
	ds
	-
	M^2 J(1+t) \biggr)\\
	=&
	\int_0^{t/2}
		\nabla (-\Delta)^{\sigma/2} P(t-s)
		*
		\left( u\nabla(-\Delta)^{-1}u - M^2 P\nabla(-\Delta)^{-1}P \right) (s)
	ds\\
	&+
	\int_{t/2}^t
		P(t-s)
		*
		\nabla (-\Delta)^{\sigma/2} \cdot \left( u\nabla(-\Delta)^{-1}u - M^2 P\nabla(-\Delta)^{-1}P \right) (s)
	ds\\
	&+ M^2 (-\Delta)^{\sigma/2} \left( J(t) - J(1+t) \right).
\end{split}
\]
Taylor's theorem together with the relation $\int_{\mathbb{R}^2} (u\nabla(-\Delta)^{-1}u - M^2 P\nabla(-\Delta)^{-1}P) dy = 0$ gives that
\[
\begin{split}
	&\int_0^{t/2}
		\nabla (-\Delta)^{\sigma/2} P(t-s)
		*
		\left( u\nabla(-\Delta)^{-1}u - M^2 P\nabla(-\Delta)^{-1}P \right) (s)
	ds\\
	=&
	\sum_{|\beta|=1} \int_0^{t/2} \int_{\mathbb{R}^2} \int_0^1
		\nabla^\beta \nabla (-\Delta)^{\sigma/2} P(t-s,x-y + \lambda y)\\
		&\hspace{15mm}
		\cdot
		(-y)^\beta \left( u\nabla(-\Delta)^{-1}u - M^2 P\nabla(-\Delta)^{-1}P \right) (s,y)
	d\lambda dyds.
\end{split}
\]
Hence
\[
\begin{split}
	&\biggl\|
		\int_0^{t/2}
			\nabla (-\Delta)^{\sigma/2} P(t-s)
			*
			\left( u\nabla(-\Delta)^{-1}u - M^2 P\nabla(-\Delta)^{-1}P \right) (s)
		ds
	\biggr\|_{L^2 (\mathbb{R}^2)}\\
	\le&
	C \sum_{|\beta|=1} \int_0^{t/2}
		(t-s)^{-3 - \sigma}
		\bigl\| (-y)^\beta \left( u\nabla(-\Delta)^{-1}u - M^2 P\nabla(-\Delta)^{-1}P \right) (s) \bigr\|_{L^1 (\mathbb{R}^2)}
	ds.
\end{split}
\]
Propositions \ref{lem-wt} and \ref{HLSth2} lead that
\[
\begin{split}
	&\bigl\| (-y)^\beta \left( u\nabla(-\Delta)^{-1}u - M^2 P\nabla(-\Delta)^{-1}P \right) \bigr\|_{L^1 (\mathbb{R}^2)}\\
	\le&
	\bigl\| (-y)^\beta u \bigr\|_{L^2 (\mathbb{R}^2)}
	\bigl\| \nabla(-\Delta)^{-1} (u - MP) \bigr\|_{L^2 (\mathbb{R}^2)}
	+
	M \bigl\| u-MP \bigr\|_{L^1 (\mathbb{R}^2)}
	\bigl\| (-y)^\beta \nabla (-\Delta)^{-1} P \bigr\|_{L^\infty (\mathbb{R}^2)}\\
	\le&
	C s^{-\varepsilon} (1+s)^{-1+\varepsilon} \log (e+s).
\end{split}
\]
Thus
\[
\begin{split}
	&\biggl\|
		\int_0^{t/2}
			\nabla (-\Delta)^{\sigma/2} P(t-s)
			*
			\left( u\nabla(-\Delta)^{-1}u - M^2 P\nabla(-\Delta)^{-1}P \right) (s)
		ds
	\biggr\|_{L^2 (\mathbb{R}^2)}
	\le
	C t^{-3 - \sigma} \left( \log (e+t) \right)^2.
\end{split}
\]
From Lemma \ref{cmm}, we have that
\[
\begin{split}
	&\biggl\|
		\int_{t/2}^t
		P(t-s)
		*
		\nabla (-\Delta)^{\sigma/2}\cdot \left( u\nabla(-\Delta)^{-1}u - M^2 P\nabla(-\Delta)^{-1}P \right) (s)
	ds
	\biggr\|_{L^2 (\mathbb{R}^2)}\\
	\le&
	C \int_{t/2}^t
		\bigl\{ \bigl\| \nabla (-\Delta)^{\sigma/2} u \bigr\|_{L^4 (\mathbb{R}^2)}
		\bigl\| \nabla (-\Delta)^{-1} (u-MP) \bigr\|_{L^4 (\mathbb{R}^2)}
		+
		\bigl\| u \bigr\|_{L^4 (\mathbb{R}^n)}
		\bigl\| \nabla^2 (-\Delta)^{\frac\sigma{2}-1} (u-MP) \bigr\|_{L^4 (\mathbb{R}^2)}\\
		&+
		\bigl\| \nabla (-\Delta)^{\sigma/2} (u-MP) \bigr\|_{L^4 (\mathbb{R}^2)}
		\bigl\| \nabla (-\Delta)^{-1} P \bigr\|_{L^4 (\mathbb{R}^2)}
		+
		\bigl\| u-MP \bigr\|_{L^4 (\mathbb{R}^2)}
		\bigl\| \nabla^2 (-\Delta)^{\frac\sigma{2}-1} P \bigr\|_{L^4 (\mathbb{R}^2)} \bigr\}
	ds.
\end{split}
\]
Therefore, by Propositions \ref{lem-dr} and \ref{tl2} with the aid of the Sobolev inequality, and Proposition \ref{ap} and \eqref{7}, we obtain that
\[
\begin{split}
	&\biggl\|
		\int_{t/2}^t
		P(t-s)
		*
		(-\Delta)^{\sigma/2}\nabla\cdot \left( u\nabla(-\Delta)^{-1}u - M^2 P\nabla(-\Delta)^{-1}P \right) (s)
	ds
	\biggr\|_{L^2 (\mathbb{R}^2)}\\
	\le&
	C \int_{t/2}^t
		s^{-4-\sigma} \log (e+s)
	ds
\end{split}
\]
for large $t$.
Since
\[
\begin{split}
	(-\Delta)^{\sigma/2} \left( J(1+t) - J(t) \right)
	=
	\int_0^1 \partial_t (-\Delta)^{\sigma/2} J(t+\mu) d\mu
\end{split}
\]
and
\[
\begin{split}
	&\partial_t (-\Delta)^{\sigma/2} J(t)
	=
	\nabla (-\Delta)^{\sigma/2} \cdot ( P \nabla (-\Delta)^{-1} P) (t)\\
	&-
	\sum_{|\beta|=1} \int_0^{t/2} \int_{\mathbb{R}^2} \int_0^1
		\nabla^\beta \nabla (-\Delta)^{(1+\sigma)/2} P(t-s, x-y+\lambda y) \cdot (-y)^\beta (P\nabla(-\Delta)^{-1}P) (s,y)
	d\lambda dyds\\
	&-
	\int_{t/2}^t
		P(t-s) * \nabla (-\Delta)^{(1+\sigma)/2} \cdot (P\nabla(-\Delta)^{-1}P) (s)
	ds,
\end{split}
\]
we see that
\[
	\bigl\| (-\Delta)^{\sigma/2} (J(1+t) - J(t)) \bigr\|_{L^2 (\mathbb{R}^2)}
	\le
	C \int_0^1 (t+\mu)^{-3-\sigma} d\mu.
\]
Consequently, we obtain \eqref{tl4-2}.
The Minkowski inequality and \eqref{7} lead \eqref{tl4-1} for small $t$.
For large $t$, \eqref{tl4-1} is derived in a similar manner to above.
\end{proof}
We remark that the proof for \eqref{tl4-1} does not require Lemma \ref{cmm}.
Thus we can show \eqref{tl4-1} even for $p = 1$.
\begin{proposition}\label{HLSth4}
Upon the assumption of Theorem \ref{th4},
\[
\begin{split}
	&\left\| \nabla (-\Delta)^{-1} \left( u(t) - M P (t) - m\cdot\nabla P (1+t) - M^2 J (1+t) \right) \right\|_{L^2 (\mathbb{R}^2)}
	\le
	C (1+t)^{-2} \left( 1 + \left| \log t \right| \right)
\end{split}
\]
for $t > 0$.
\end{proposition}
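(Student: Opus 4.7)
The plan is to mirror the proof of Proposition \ref{HLSth2}, pushing the Taylor expansions one order further and invoking Proposition \ref{tl4} for the sharpened remainder. As a preliminary rewriting, integration by parts in the second integral of \eqref{J} gives $J(t) = \int_0^t \nabla P(t-s) * (P\nabla(-\Delta)^{-1}P)(s)\,ds$, and the shift $s\mapsto s+1$ yields
\[
	M^2 J(1+t) = M^2 \int_0^t \nabla P(t-s) * (P\nabla(-\Delta)^{-1}P)(1+s)\,ds + M^2 \int_0^1 \nabla P(t+\sigma) * (P\nabla(-\Delta)^{-1}P)(1-\sigma)\,d\sigma.
\]
Fixing $k\in\{1,2\}$, I would apply $\partial_k(-\Delta)^{-1}$ to $u(t)-MP(t)-m\cdot\nabla P(1+t)-M^2 J(1+t)$ and, using \eqref{MS}, decompose into a linear Taylor-remainder $L(t)=\partial_k(-\Delta)^{-1}(P(t)*u_0-MP(t)-m\cdot\nabla P(1+t))$, a compensated nonlinear term whose integrand $F(s)=u\nabla(-\Delta)^{-1}u(s)-M^2 P\nabla(-\Delta)^{-1}P(1+s)$ satisfies $\int F(s,y)dy=0$, and a short-time boundary piece that is easily bounded by $C(1+t)^{-2}$ via Lemma \ref{decay-lin}.

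For $L(t)$, I would use $\int u_0 dy=M$ and $\int(-y)u_0 dy=m$ to write $P(t)*u_0-MP(t)-m\cdot\nabla P(t)$ as a second-order Taylor remainder, and handle the shift $m\cdot\nabla P(t)-m\cdot\nabla P(1+t)=-m\cdot\int_0^1\partial_t\nabla P(t+\mu)d\mu$ via the fundamental theorem of calculus. Applying $\partial_k(-\Delta)^{-1}$ produces convolutions of $u_0$ against kernels of the form $\partial_k(-\Delta)^{-1}\nabla^\alpha P(t)$ with $|\alpha|=2$ and against $\partial_k(-\Delta)^{-1/2}\nabla P(t+\mu)$; since both reduce via the $L^2$-boundedness of Riesz transforms to first-order derivatives of $P$, Young's inequality together with $u_0\in L^1(\mathbb{R}^2,(1+|y|^2)dy)$ gives the long-time decay $Ct^{-2}$, while for small $t$ a direct Plancherel argument exploiting $\hat u_0(\xi)-M-im\cdot\xi=O(|\xi|^2)$ near $\xi=0$ absorbs the endpoint into the $1+|\log t|$ factor. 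For the nonlinear piece, the cancellation $\int F(s,y)dy=0$ permits the standard split $\int_0^t=\int_0^{t/2}+\int_{t/2}^t$. On $(0,t/2)$, I would Taylor-expand $\partial_k(-\Delta)^{-1}\nabla P(t-s,x-y)$ in $y$ about the origin to extract a factor of $y$, reducing the $L^2$ estimate to controlling $\|yF(s)\|_{L^1}$; this bound follows from the decomposition
\[
	F(s)=u\nabla(-\Delta)^{-1}(u-MP(1+s))+M(u-MP(1+s))\nabla(-\Delta)^{-1}P(1+s)
\]
together with Propositions \ref{lem-wt}, \ref{ap}, and \ref{HLSth2}. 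On $(t/2,t)$, I would bound $F(s)$ directly in $L^r$ for $r$ slightly above $1$ using the same decomposition sharpened by Proposition \ref{tl4}, and apply $\|\partial_k(-\Delta)^{-1}\nabla P(t-s)*F(s)\|_{L^2}\le C(t-s)^{-2(1-1/r')}\|F(s)\|_{L^r}$ with $r'$ the Young conjugate.

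The main obstacle will be the careful bookkeeping in the nonlinear estimate, where the $\log t$ factor in the final bound arises from the borderline singularity $\|(P\nabla(-\Delta)^{-1}P)(1+s)\|_{L^1}\le C(1+s)^{-1}$ integrated over $s\in(0,t/2)$ -- precisely the mechanism behind the vanishing logarithmic term discussed after Theorem \ref{th4}. The short-time $|\log t|$ loss is the unavoidable consequence of the Riesz transform failing to be bounded on $L^1$, which prevents a clean $L^1\to L^2$ endpoint for $\nabla(-\Delta)^{-1}$ in two dimensions and forces the use of intermediate exponents with a logarithmic defect.
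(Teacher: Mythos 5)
Your overall strategy is the paper's: split via \eqref{MS} into a linear Taylor remainder and a nonlinear part compensated by $M^2 P\nabla(-\Delta)^{-1}P$, exploit the zero-mean property of the compensated integrand to gain a factor of $y$ on $(0,t/2)$, control the weighted $L^1$ norm through exactly the decomposition $u\nabla(-\Delta)^{-1}(u-MP)+M(u-MP)\nabla(-\Delta)^{-1}P$ with Propositions \ref{lem-wt}, \ref{ap} and \ref{HLSth2}, and treat $(t/2,t)$ by an $L^r\to L^2$ bound. The only genuine organisational difference is cosmetic: you shift the compensator to time $1+s$ inside the Duhamel integral, whereas the paper keeps it at time $s$ and estimates $M^2\nabla(-\Delta)^{-1}(J(t)-J(1+t))$ separately; likewise your Plancherel treatment of the linear part at small $t$ replaces the paper's fundamental-theorem-of-calculus decomposition, and both yield the $1+|\log t|$ factor.

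Two points need repair. First, your estimate on $(t/2,t)$ is wrong as stated: $\partial_k(-\Delta)^{-1}\nabla P(t-s)$ is a double Riesz transform of $P(t-s)$, hence an operator of order zero, and the correct gain is $(t-s)^{-2(\frac1r-\frac12)}\|F(s)\|_{L^r}$, not $(t-s)^{-2(1-\frac1{r'})}=(t-s)^{-2/r}$. With your exponent the integral over $(t/2,t)$ diverges at $s=t$ for every $r>1$, and even with the corrected exponent the choice ``$r$ slightly above $1$'' puts you at the borderline $(t-s)^{-1}$; one should fix $r$ away from $1$ (the paper takes $r=3/2$, giving $(t-s)^{-1/3}$ against $\|F(s)\|_{L^{3/2}}\le Cs^{-8/3}$). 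Second, the boundary term $\int_0^1\nabla P(t+\sigma)*(P\nabla(-\Delta)^{-1}P)(1-\sigma)\,d\sigma$ produced by your shift $s\mapsto s+1$ in \eqref{J} is not absolutely convergent if you only use $\|(P\nabla(-\Delta)^{-1}P)(1-\sigma)\|_{L^1}\le C(1-\sigma)^{-1}$; you must again invoke $\int_{\mathbb{R}^2}P\nabla(-\Delta)^{-1}P\,dy=0$ and the uniformly bounded first moment $\||y|(P\nabla(-\Delta)^{-1}P)(s)\|_{L^1}\le C$ to make it finite, and even then the crude bound $\int_0^1(t+\sigma)^{-2}d\sigma\sim t^{-1}$ exceeds the claimed $C(1+|\log t|)$ as $t\to0$, so small times require a separate, less aggressive treatment. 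Neither issue requires a new idea, but both would break the proof as literally written.
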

\begin{proof}
From \eqref{MS}, we see that
\[
\begin{split}
	&\nabla (-\Delta)^{-1} \left( u(t) - M P (t) - m\cdot\nabla P (1+t) - M^2 J (1+t) \right)\\
	=&
	\nabla (-\Delta)^{-1}  \left( P(t) * u_0 - M P (t) - m\cdot\nabla P (1+t) \right)\\
	&+
	\nabla (-\Delta)^{-1} \biggl( \int_0^t P (t-s) * \nabla\cdot(u\nabla (-\Delta)^{-1} u) (s) ds - M^2 J (1+t) \biggr).
\end{split}
\]
We estimate the first part.
Since
\[
	P(t) * u_0 - M P(t) - m\cdot\nabla P(1+t)
	=
	P(t) * u_0 - M P(t) - m\cdot\nabla P(t) + m \cdot \nabla \left( P(t) - P(1+t) \right),
\]
we have that
\[
\begin{split}
	&\nabla (-\Delta)^{-1}  \left( P(t) * u_0 - M P (t) - m\cdot\nabla P (1+t) \right)\\
	=&
	\sum_{|\alpha| = 2} \int_{\mathbb{R}^2} \int_0^1
		\frac{\nabla (-\Delta)^{-1} \nabla^\alpha P (t,x-y+\lambda y)}{\alpha!} (-\lambda) (-y)^\alpha u_0 (y)
	d\lambda dy\\
	&+
	\int_0^1 \nabla (-\Delta)^{-1} (m \cdot \nabla) \partial_t P (t+\mu) d\mu
\end{split}
\]
from Taylor's theorem.
Hence Lemma \ref{decay-lin} yields that
\[
	\left\| \nabla (-\Delta)^{-1} \left( P(t) * u_0 - M P(t) - m\cdot\nabla P(1+t) \right) \right\|_{L^2 (\mathbb{R}^2)}
	\le
	C t^{-2}.
\]
On the other hand, from
\[
\begin{split}
	&P(t) * u_0 - M P(t) - m\cdot\nabla P(1+t)\\
	=&
	- \int_0^1 \partial_t P (t+\lambda) * u_0  d\lambda
	+
	\int_{\mathbb{R}^2} \int_0^1 \nabla P(1+t,x-y+\lambda y) \cdot (-y) u_0 (y) d\lambda dy
	-
	M \int_0^1 \partial_t P(t+\lambda) d\lambda,
\end{split}
\]
this part fulfills that
\[
\begin{split}
	&\left\| \nabla (-\Delta)^{-1} \left( P(t) * u_0 - M P(t) - m\cdot\nabla P(1+t) \right) \right\|_{L^2 (\mathbb{R}^2)}\\
	\le&
	C \int_0^1 (t+\lambda)^{-1} d\lambda
	\le
	C \log (1+\tfrac1{t}).
\end{split}
\]
Therefore we obtain that
\[
	\left\| \nabla (-\Delta)^{-1} \left( P(t) * u_0 - M P(t) - m\cdot\nabla P(1+t) \right) \right\|_{L^2 (\mathbb{R}^2)}
	\le
	C (1+t)^{-2} \left( 1 + \left| \log t \right| \right).
\]
For the nonlinear term, we have that
\[
\begin{split}
	&\nabla (-\Delta)^{-1} \biggl( \int_0^t P (t-s) * \nabla \cdot (u\nabla (-\Delta)^{-1} u) (s) ds - M^2 J (1+t) \biggr)\\
	=&
	\nabla^2 (-\Delta)^{-1} \int_0^{t/2} \int_{\mathbb{R}^2}
		\left( P(t-s,x-y) - P (t-s,x) \right) \cdot \left( u \nabla (-\Delta)^{-1} u - M^2 P \nabla (-\Delta)^{-1} P \right) (s,y)
	dyds\\
	&+
	\nabla^2 (-\Delta)^{-1} \int_{t/2}^t
		P(t-s) * \left( u \nabla (-\Delta)^{-1} u - M^2 P \nabla (-\Delta)^{-1} P \right) (s)
	ds\\
	&+
	M^2 \nabla (-\Delta)^{-1} \left( J(t) - J(1+t) \right).
\end{split}
\]
By Lemma \ref{decay-lin} with Taylor's theorem, we obtain that
\[
\begin{split}
	&\biggl\| \nabla (-\Delta)^{-1} \biggl( \nabla \cdot \int_0^t P (t-s) * (u\nabla (-\Delta)^{-1} u) (s) ds - M^2 J (1+t) \biggr) \biggr\|_{L^2 (\mathbb{R}^2)}\\
	\le&
	C \sum_{|\beta| = 1} \int_0^{t/2}
		(t-s)^{-2} \bigl\| (-y)^\beta \left( u \nabla (-\Delta)^{-1} u - M^2 P \nabla (-\Delta)^{-1} P \right) (s) \bigr\|_{L^1 (\mathbb{R}^2)}
	ds\\
	&+
	C \int_{t/2}^t
		(t-s)^{-1/3}
		\left\| \left( u \nabla (-\Delta)^{-1} u - M^2 P \nabla (-\Delta)^{-1} P \right) (s) \right\|_{L^{3/2} (\mathbb{R}^2)}
	ds
	+ Ct^{-2}\\
	\le&
	C \int_0^{t/2}
		(t-s)^{-2} s^{-\varepsilon} (1+s)^{-1+\varepsilon} \log (e+s)
	ds
	+
	C \int_{t/2}^t
		(t-s)^{-1/3} s^{-8/3}
	ds
	+
	C t^{-2}\\
	\le&
	C t^{-2} \log (e+t).
\end{split}
\]
Therefore we complete the proof.
\end{proof}
Since
\[
\begin{split}
	&u\nabla(-\Delta)^{-1}u -M^2 P\nabla(-\Delta)^{-1}P
	-
	M \left( P \nabla(-\Delta)^{-1} (m\cdot\nabla P + M^2 J) + (m\cdot\nabla P + M^2 J) \nabla (-\Delta)^{-1} P \right)\\
	=&
	u \nabla (-\Delta)^{-1} \left( u - M P - m\cdot\nabla P - M^2 J \right)
	+
	M \left( u - M P - m\cdot\nabla P - M^2 J \right) \nabla (-\Delta)^{-1} P\\
	&+
	\left( u - M P \right) \nabla (-\Delta)^{-1} (m\cdot\nabla) P
	+
	M^2 \left( u - M P \right) \nabla (-\Delta)^{-1} J,
\end{split}
\]
we see \eqref{coef4} from Propositions \ref{tl4} and \ref{HLSth4}.\\

\noindent
{\it Proof of Theorem \ref{th4}.}
The decay of the first term on the right hand side of \eqref{MS} is treated by Lemma \ref{ap-lin}.
We divide the second term as
\[
\begin{split}
	&\int_0^t \nabla P(t-s) * (u\nabla(-\Delta)^{-1}u) (s) ds\\
	=&
	M^2 J(t) + \int_0^t \nabla P(t-s) * \bigl\{
		u\nabla(-\Delta)^{-1}u(s) -M^2 P\nabla(-\Delta)^{-1}P (s)\\
		&\hspace{20mm}-
		M \left( P \nabla(-\Delta)^{-1} (m\cdot\nabla) P
				+ (m\cdot\nabla) P \nabla(-\Delta)^{-1} P \right) (1+s)\\
		&\hspace{20mm}-
		M^3 \left( P\nabla(-\Delta)^{-1}J + J \nabla (-\Delta)^{-1} P \right) (1+s)
	\bigr\}ds\\
	&+
	M \int_0^t \nabla P(t-s)
	* \left( P\nabla(-\Delta)^{-1}(m\cdot\nabla) P + (m\cdot\nabla)P \nabla (-\Delta)^{-1} P \right) (1+s) ds\\
	&+
	M^3
	\int_0^t
		\nabla P(t-s)
		*
		\left( P \nabla (-\Delta)^{-1} J + J \nabla (-\Delta)^{-1} P \right) (1+s)
	ds.
\end{split}
\]
Since $\int_{\mathbb{R}^2} u\nabla(-\Delta)^{-1}u dy = \int_{\mathbb{R}^2} P \nabla (-\Delta)^{-1} P dy = \int_{\mathbb{R}^2} ( P \nabla (-\Delta)^{-1} (m\cdot\nabla) P + (m\cdot\nabla) P \nabla (-\Delta)^{-1} P ) dy\\ = \int_{\mathbb{R}^2} (P \nabla (-\Delta)^{-1} J + J \nabla (-\Delta)^{-1} P) dy = 0$, we see that
\[
\begin{split}
	&\int_0^t \nabla P(t-s) * \bigl\{
		u\nabla(-\Delta)^{-1}u(s) -M^2 P\nabla(-\Delta)^{-1}P (s)\\
		&\hspace{15mm}-
		M \left( P \nabla(-\Delta)^{-1} (m\cdot\nabla) P
				+ (m\cdot\nabla) P \nabla(-\Delta)^{-1} P \right) (1+s)\\
		&\hspace{15mm}-
		M^3 \left( P\nabla(-\Delta)^{-1}J + J \nabla (-\Delta)^{-1} P \right) (1+s)
	\bigr\}ds\\
	=&
	\sum_{|\beta| = 1}
	\nabla^\beta \nabla P (t) \cdot
	\int_0^\infty \int_{\mathbb{R}^2}
	(-y)^\beta \bigl\{
		u\nabla(-\Delta)^{-1}u(s,y) -M^2 P\nabla(-\Delta)^{-1}P (s,y)\\
		&\hspace{15mm}-
		M \left( P \nabla(-\Delta)^{-1} (m\cdot\nabla) P
				+ (m\cdot\nabla) P \nabla(-\Delta)^{-1} P \right) (1+s,y)\\
		&\hspace{15mm}-
		M^3 \left( P\nabla(-\Delta)^{-1}J + J \nabla (-\Delta)^{-1} P \right) (1+s,y)
	\bigr\} dyds
	+
	\rho_1 (t) + \rho_2 (t) + \rho_3 (t),
\end{split}
\]
where
\[
\begin{split}
	\rho_1 (t)
	&=
	\int_0^{t/2} \int_{\mathbb{R}^2}
		\biggl(
			\nabla P(t-s,x-y)
			-
			\sum_{|\beta| \le 1} \nabla^\beta \nabla P (t,x) (-y)^\beta
		\biggr)\\
		&\hspace{10mm}\cdot
		\bigl\{
		u\nabla(-\Delta)^{-1}u(s,y) -M^2 P\nabla(-\Delta)^{-1}P (s,y)\\
		&\hspace{15mm}-
		M \left( P \nabla(-\Delta)^{-1} (m\cdot\nabla) P
				+ (m\cdot\nabla) P \nabla(-\Delta)^{-1} P \right) (1+s,y)\\
		&\hspace{15mm}-
		M^3 \left( P\nabla(-\Delta)^{-1}J + J \nabla (-\Delta)^{-1} P \right) (1+s,y)
	\bigr\} dyds,\\
	\rho_2 (t)
	&=
	\int_{t/2}^t P (t-s) * \nabla\cdot \bigl\{
		u\nabla(-\Delta)^{-1}u(s) -M^2 P\nabla(-\Delta)^{-1}P (s)\\
		&\hspace{15mm}-
		M \left( P \nabla(-\Delta)^{-1} (m\cdot\nabla) P
				+ (m\cdot\nabla) P \nabla(-\Delta)^{-1} P \right) (1+s)\\
		&\hspace{15mm}-
		M^3 \left( P\nabla(-\Delta)^{-1}J + J \nabla (-\Delta)^{-1} P \right) (1+s)
	\bigr\}ds,\\
	%
	\rho_3 (t)
	&=
	-\sum_{|\beta| = 1}
	\nabla^\beta \nabla P (t) \cdot
	\int_{t/2}^\infty \int_{\mathbb{R}^2}
	(-y)^\beta
	\bigl\{
		u\nabla(-\Delta)^{-1}u(s,y) -M^2 P\nabla(-\Delta)^{-1}P (s,y)\\
		&\hspace{15mm}-
		M \left( P \nabla(-\Delta)^{-1} (m\cdot\nabla) P
				+ (m\cdot\nabla) P \nabla(-\Delta)^{-1} P \right) (1+s,y)\\
		&\hspace{15mm}-
		M^3 \left( P\nabla(-\Delta)^{-1}J + J \nabla (-\Delta)^{-1} P \right) (1+s,y)
	\bigr\} dyds.
\end{split}
\]
Moreover, from $\int_{\mathbb{R}^2} (-y)^\beta (P\nabla(-\Delta)^{-1} (m\cdot\nabla) P + (m\cdot\nabla)P \nabla (-\Delta)^{-1}P) dy = 0$ for $|\beta| \le 1$, we obtain that
\[
\begin{split}
	&\int_0^t \nabla P(t-s)
	* \left( P\nabla(-\Delta)^{-1}(m\cdot\nabla) P + (m\cdot\nabla)P \nabla (-\Delta)^{-1} P \right) (1+s) ds\\
	=&
	\int_0^t \nabla P(t-s)
	* \left( P\nabla(-\Delta)^{-1}(m\cdot\nabla) P + (m\cdot\nabla)P \nabla (-\Delta)^{-1} P \right) (s) ds
	+\rho_4 (t) + \rho_5 (t),
\end{split}
\]
where
\[
\begin{split}
	\rho_4 (t)
	=&
	\sum_{|\beta| = 1} \int_0^{t/2} \int_{\mathbb{R}^2}
		\biggl( \nabla P (t-s,x-y) - \sum_{|\beta| \le 1} \nabla^\beta \nabla P (t-s,x) (-y)^\beta \biggr)\\
		&\hspace{5mm}\cdot \bigl\{ \bigl( P\nabla(-\Delta)^{-1} (m\cdot\nabla) P + (m\cdot\nabla)P \nabla (-\Delta)^{-1}P \bigr) (1+s,y)\\
		&\hspace{10mm} - \bigl( P\nabla(-\Delta)^{-1} (m\cdot\nabla) P + (m\cdot\nabla)P \nabla (-\Delta)^{-1}P \bigr) (s,y) \bigr\}
	dyds,\\
	\rho_5 (t)
	=&
	\int_{t/2}^t \int_0^1
		P (t-s)
		* \nabla \cdot \bigl\{ \bigl( P\nabla(-\Delta)^{-1} (m\cdot\nabla) P + (m\cdot\nabla)P \nabla (-\Delta)^{-1}P \bigr) (1+s)\\
		&\hspace{10mm} - \bigl( P\nabla(-\Delta)^{-1} (m\cdot\nabla) P + (m\cdot\nabla)P \nabla (-\Delta)^{-1}P \bigr) (s) \bigr\}
	ds.
\end{split}
\]
Similarly we have that
\[
\begin{split}
	&\int_0^t
		\nabla P(t-s)
		*
		\left( P \nabla (-\Delta)^{-1} J + J \nabla (-\Delta)^{-1} P \right) (1+s)
	ds\\
	=&
	\sum_{|\beta| = 1} \nabla^\beta \nabla P(t,x)
	\cdot \int_0^{t/2} \int_{\mathbb{R}^2}
		(-y)^\beta \left( P \nabla (-\Delta)^{-1} J + J \nabla (-\Delta)^{-1} P \right) (1+s,y)
	dyds\\
	&+
	\int_0^{t/2} \int_{\mathbb{R}^2}
		\biggl( \nabla P(t-s,x-y) - \sum_{|\beta| \le 1} \nabla^\beta \nabla P(t,x) (-y)^\beta \biggr)\\
		&\hspace{10mm}\cdot
		\left( P \nabla (-\Delta)^{-1} J + J \nabla (-\Delta)^{-1} P \right) (1+s,y)
	dyds\\
	&+
	\int_{t/2}^t
		P(t-s) * \nabla\cdot \left( P \nabla (-\Delta)^{-1} J + J \nabla (-\Delta)^{-1} P \right) (1+s)
	ds\\
	=&
	\sum_{|\beta|=1} \nabla^\beta \nabla P(t,x)
	\cdot
	\int_0^{t/2} (1+s)^{-1} ds
	\int_{\mathbb{R}^2}
		(-y)^\beta \left( P\nabla(-\Delta)^{-1}J + J\nabla(-\Delta)^{-1}P \right) (1,y)
	dy\\
	&+
	\int_0^{t/2} \int_{\mathbb{R}^2}
		\biggl( \nabla P(t-s,x-y) + (y\cdot\nabla) \nabla P(t,x) \biggr)\\
		&\hspace{15mm}\cdot
		\left( P \nabla (-\Delta)^{-1} J + J \nabla (-\Delta)^{-1} P \right) (s,y)
	dyds\\
	&+
	\int_{t/2}^t
		P(t-s) * \nabla \cdot \left( P \nabla (-\Delta)^{-1} J + J \nabla (-\Delta)^{-1} P \right) (s)
	ds
	+ \rho_6 (t) + \rho_7 (t),
\end{split}
\]
where
\[
\begin{split}
	\rho_6 (t)
	=&
	\int_0^{t/2} \int_{\mathbb{R}^2}
		\biggl( \nabla P (t-s,x-y) - \sum_{|\beta| \le 1} \nabla^\beta \nabla P(t,x) (-y)^\beta \biggr)\\
		&\hspace{5mm}\cdot
		\left( \left( P \nabla (-\Delta)^{-1} J + J \nabla (-\Delta)^{-1} P \right) (1+s,y)
		-
		\left( P \nabla (-\Delta)^{-1} J + J \nabla (-\Delta)^{-1} P \right) (s,y) \right)
	dyds,\\
	\rho_7 (t)
	=&
	\int_{t/2}^t
		P(t-s)\\
		&\hspace{5mm} *
		\nabla \cdot \left( \left( P \nabla (-\Delta)^{-1} J + J \nabla (-\Delta)^{-1} P \right) (1+s)
		-
		\left( P \nabla (-\Delta)^{-1} J + J \nabla (-\Delta)^{-1} P \right) (s) \right)
	ds.
\end{split}
\]
Now we remark that
\[
	\sum_{|\beta|=1} \nabla^\beta \nabla P(t,x)
	\cdot
	\int_0^{t/2} (1+s)^{-1} ds
	\int_{\mathbb{R}^2}
		(-y)^\beta \left( P\nabla(-\Delta)^{-1}J + J\nabla(-\Delta)^{-1}P \right) (1,y)
	dy
	=
	K(t)
\]
since $P \partial_j (-\Delta)^{-1} J + \partial_j (-\Delta)^{-1} P$ is an odd function in $x_j$ and is an even function in another spatial variable.
Consequently, we see that
\begin{equation}\label{th4bs}
\begin{split}
	&\int_0^t
		\nabla P(t-s) * (u\nabla(-\Delta)^{-1}u) (s)
	ds
	=
	M^2 J(t) + M^3 K(t) + J_2 (t)\\
	+&
	\sum_{|\beta|=1} \nabla^\beta \nabla P(t)
	\cdot
	\int_0^\infty \int_{\mathbb{R}^2}
		(-y)^\beta \bigl\{
			u\nabla (-\Delta)^{-1}u (s,y) - M^2 P \nabla (-\Delta)^{-1} P(s,y)\\
			&\hspace{10mm} -M \bigl(
				P\nabla(-\Delta)^{-1} (m\cdot\nabla P + M^2 J)
				+
				(m\cdot\nabla P + M^2 J) \nabla (-\Delta)^{-1} P
			\bigr) (1+s,y)
		\bigr\}
	dyds\\
	+& \rho_1 (t) + \cdots + \rho_7 (t).
\end{split}
\end{equation}
We can show that
\begin{equation}\label{th4bs1}
	\left\| \rho_1 (t) \right\|_{L^q (\mathbb{R}^2)}
	+
	\left\| \rho_2 (t) \right\|_{L^q (\mathbb{R}^2)}
	+
	\left\| \rho_3 (t) \right\|_{L^q (\mathbb{R}^2)}
	=
	o \bigl( t^{-2 (1-\frac1q) -2} \bigr)
\end{equation}
as $t\to\infty$ for $1 \le q \le \infty$ from the similar way as in the proof of Theorem \ref{th2}.
Indeed, we divide $\rho_1$ into $\rho_1 = \rho_{1,1} + \rho_{1,2}$, where
\[
\begin{split}
	\rho_{1,1} (t)
	=&
	\int_0^{t/2} \int_{\mathbb{R}^2}
		\biggl(
			\nabla P(t-s,x-y)
			-
			\sum_{|\beta| \le 1} \nabla^\beta \nabla P (t-s,x) (-y)^\beta
		\biggr)\\
		&\hspace{5mm}\cdot
		\bigl\{
		u\nabla(-\Delta)^{-1}u(s,y) -M^2 P\nabla(-\Delta)^{-1}P (s,y)\\
		&\hspace{10mm}-
		M \left( P \nabla(-\Delta)^{-1} (m\cdot\nabla P + M^2 J)
		+ (m\cdot\nabla P + M^2 J) \nabla(-\Delta)^{-1} P \right) (1+s,y)
	\bigr\} dyds,\\
	\rho_{1,2} (t)
	=&
	\sum_{|\beta| = 1} \int_0^{t/2} \int_{\mathbb{R}^2}
		\bigl(
			\nabla^\beta \nabla P(t-s,x)
			-
			\nabla^\beta \nabla P (t,x)
		\bigr)\\
		&\hspace{5mm}\cdot
		(-y)^\beta \bigl\{
		u\nabla(-\Delta)^{-1}u(s,y) -M^2 P\nabla(-\Delta)^{-1}P (s,y)\\
		&\hspace{10mm}-
		M \left( P \nabla(-\Delta)^{-1} (m\cdot\nabla P + M^2 J)
		+ (m\cdot\nabla P + M^2 J) \nabla(-\Delta)^{-1} P \right) (1+s,y)
	\bigr\} dyds.
\end{split}
\]
We consider only $\rho_{1,1}$, and split it as
\[
\begin{split}
	\rho_{1,1} (t)
	=&
	\int_0^{t/2} \int_{\mathbb{R}^2}
		\biggl( \nabla P (t-s,x-y) - \sum_{|\beta| \le 1} \nabla^\beta \nabla  P (t-s,x) (-y)^\beta \biggr)\\
		&\hspace{5mm} \cdot
		u (s) \nabla (-\Delta)^{-1} \left( u(s) - M P (s) - m\cdot\nabla P (1+s) - M^2 J (1+s) \right)
	dyds\\
	&+
	M\int_0^{t/2} \int_{\mathbb{R}^2}
		\biggl( \nabla P (t-s,x-y) - \sum_{|\beta| \le 1} \nabla^\beta \nabla  P (t-s,x) (-y)^\beta \biggr)\\
		&\hspace{5mm} \cdot
		\left( u(s) - M P(s) - m \cdot \nabla P (1+s) - M^2 J(1+s) \right) \nabla (-\Delta)^{-1} P (s)
	dyds\\
	&+
	\int_0^{t/2} \int_{\mathbb{R}^2}
		\biggl( \nabla P (t-s,x-y) - \sum_{|\beta| \le 1} \nabla^\beta \nabla  P (t-s,x) (-y)^\beta \biggr)\\
		&\hspace{5mm} \cdot
		\left( u(s) - M P (s) \right)
		\nabla (-\Delta)^{-1} \left( m\cdot\nabla P + M^2 J \right) (1+s)
	dyds.
\end{split}
\]
%
The similar procedure as in the proof of Theorem \ref{th2} with the aid of Propositions \ref{tl4} and \ref{HLSth4} leads that
\[
\begin{split}
	&\biggl\|
		\int_0^{t/2} \int_{\mathbb{R}^2}
			\biggl( \nabla P (t-s,x-y) - \sum_{|\beta| \le 1} \nabla^\beta \nabla  P (t-s,x) (-y)^\beta \biggr)\\
			&\hspace{5mm} \cdot
			u (s) \nabla (-\Delta)^{-1} \left( u(s) - M P (s) - m\cdot\nabla P (1+s) - M^2 J (1+s) \right)
		dyds
	\biggr\|_{L^q (\mathbb{R}^2)}\\
	+&
	\biggl\|
		\int_0^{t/2} \int_{\mathbb{R}^2}
			\biggl( \nabla P (t-s,x-y) - \sum_{|\beta| \le 1} \nabla^\beta \nabla  P (t-s,x) (-y)^\beta \biggr)\\
			&\hspace{5mm} \cdot
			\left( u(s) - M P(s) - m \cdot \nabla P (1+s) - M^2 J(1+s) \right) \nabla (-\Delta)^{-1} P (s)
		dyds
	\biggr\|_{L^q (\mathbb{R}^2)}
	=
	o \bigl( t^{-2(1-\frac1q) - 2} \bigr)
\end{split}
\]
as $t\to\infty$ for $1 \le q \le \infty$.
Taylor's theorem provides that
\[
\begin{split}
	&\int_0^{t/2} \int_{\mathbb{R}^2}
		\biggl( \nabla P (t-s,x-y) - \sum_{|\beta| \le 1} \nabla^\beta \nabla  P (t-s,x) (-y)^\beta \biggr)\\
		&\hspace{5mm} \cdot
		\left( u(s) - M P (s) \right)
		\nabla (-\Delta)^{-1} \left( m\cdot\nabla P + M^2 J \right) (1+s)
	dyds\\
	=&
	\sum_{|\beta| = 2} \int_0^{t/2} \int_{\mathbb{R}^2} \int_0^1
		\frac{\nabla^\beta \nabla P (t-s,x-y+\lambda y)}{\beta!}\\
		&\hspace{5mm}\cdot
		\lambda (-y)^\beta \left( u(s) - M P (s) \right)
		\nabla (-\Delta)^{-1} \left( m\cdot\nabla P + M^2 J \right) (1+s)
	d\lambda dyds.
\end{split}
\]
Thus, by Lemma \ref{decay-lin} and Proposition \ref{ap}, we have that
\[
\begin{split}
	&\biggl\| \int_0^{t/2} \int_{\mathbb{R}^2}
		\biggl( \nabla P (t-s,x-y) - \sum_{|\beta| \le 1} \nabla^\beta \nabla  P (t-s,x) (-y)^\beta \biggr)\\
		&\hspace{5mm} \cdot
		\left( u(s) - M P (s) \right)
		\nabla (-\Delta)^{-1} \left( m\cdot\nabla P + M^2 J \right) (1+s)
	dyds \biggr\|_{L^q (\mathbb{R}^2)}\\
	\le&
	C \int_0^{t/2}
		(t-s)^{-2(1-\frac1q) -3}
		\bigl\| u(s)-MP(s) \bigr\|_{L^1 (\mathbb{R}^2)} \bigl\| |y|^2 \nabla (-\Delta)^{-1} \left( m\cdot\nabla P + M^2 J \right) (1+s) \bigr\|_{L^\infty (\mathbb{R}^2)}
	ds\\
	\le&
	C \int_0^{t/2}
		(t-s)^{-2(1-\frac1q) -3} (1+s)^{-1} \log (e+s)
	ds
	=
	o \bigl( t^{-2(1-\frac1q) - 2} \bigr).
\end{split}
\]
Here we used the relation $\sup_{s > 0} \| |y|^2 \nabla (-\Delta)^{-1} \left( m\cdot\nabla P + M^2 J \right) (1+s) \|_{L^\infty (\mathbb{R}^2)} < \infty$.
Indeed, since
\[
	\nabla (-\Delta)^{-1} J(1)
	=
	\sum_{j=1}^2 \int_0^1 \nabla \partial_j (-\Delta)^{-1} P (1-s) * (P\partial_j (-\Delta)^{-1}P) (s) ds,
\]
we see from the H\"ormander-Mikhlin-type estimate that
\[
	\left| \nabla (-\Delta)^{-1} J(1) \right|
	\le
	C \left( 1+|y|^2 \right)^{-1}.
\]
A coupling of this and \eqref{scJ} yields that $\sup_{s>0} \| |y|^2 \nabla (-\Delta)^{-1} J (1+s) \|_{L^\infty (\mathbb{R}^2)} < \infty$.
Analogously we obtain that $\sup_{s>0} \| |y|^2 \nabla^2 (-\Delta)^{-1} P (1+s) \|_{L^\infty (\mathbb{R}^2)} < \infty$.
Similarly, we can treat $\rho_2$, and confirm that
\[
\begin{split}
	&\int_0^\infty \int_{\mathbb{R}^2}
		\bigl| y_j
		\bigl( u\nabla(-\Delta)^{-1}u -M^2 P\nabla(-\Delta)^{-1}P\\
	&-
	M \left( P \nabla(-\Delta)^{-1} (m\cdot\nabla P + M^2 J)
	+ (m\cdot\nabla P + M^2 J) \nabla(-\Delta)^{-1} P \right) \bigr|
	dyds
	< + \infty.
\end{split}
\]
Moreover we have the estimate for $\rho_3$.
Taylor's theorem yields that
\[
\begin{split}
	\rho_4 (t)
	=&
	\sum_{|\beta|=2} \int_0^{t/2} \int_{\mathbb{R}^2} \int_0^1 \int_0^1
		\frac{\nabla^\beta \nabla P(t-s,x-y+\lambda y)}{\beta!}
		\lambda (-y)^\beta\\
		&\hspace{5mm}\cdot
		\partial_t \left( P\nabla(-\Delta)^{-1}(m\cdot\nabla) P
		+ (m\cdot\nabla) P \nabla(-\Delta)^{-1}P \right) (s+\mu,y)
	d\mu d\lambda dyds
\end{split}
\]
and
\[
\begin{split}
	\rho_5 (t)
	=&
	\int_{t/2}^t \int_0^1
		P(t-s) *
		\nabla \cdot \partial_t \left( P\nabla(-\Delta)^{-1}(m\cdot\nabla) P
		+ (m\cdot\nabla) P \nabla(-\Delta)^{-1}P \right) (s+\mu,y)
	d\mu ds.
\end{split}
\]
For $1 \le q \le \infty$, we see from Lemma \ref{decay-lin} that
\begin{equation}\label{th4bs4}
\begin{split}
	&\left\| \rho_4 (t) \right\|_{L^p (\mathbb{R}^2)} + \left\| \rho_5 (t) \right\|_{L^q (\mathbb{R}^2)}\\
	\le&
	C \int_0^{t/2} \int_0^1
		(t-s)^{-2(1-\frac1q)-3}
		(s+\mu)^{-1}
	d\mu ds
	+
	C \int_{t/2}^t \int_0^1
		(s+\mu)^{-2(1-\frac1q)-4}
	d\mu ds\\
	=&
	o \bigl( t^{-2(1-\frac1q)-2} \bigr)
\end{split}
\end{equation}
as $t\to\infty$.
Analogously
\begin{equation}\label{th4bs6}
	\left\| \rho_6 (t) \right\|_{L^q (\mathbb{R}^2)}
	+
	\left\| \rho_7 (t) \right\|_{L^q (\mathbb{R}^2)}
	=
	o \bigl( t^{-2 (1-\frac1q) - 2} \bigr)
\end{equation}
as $t\to\infty$ for $1 \le q \le \infty$.
Applying \eqref{th4bs1}-\eqref{th4bs6} to \eqref{th4bs}, we complete the proof.
\hfill$\square$

\subsection{Properties of the correction terms}
Before closing this paper, we confirm the basic properties of the correction terms in the theorems.
\begin{proposition}\label{proptJ}
	The function $\tilde{J}$ in \eqref{tJ} satisfies \eqref{tJp}.
\end{proposition}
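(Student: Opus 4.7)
The plan is to reduce the $L^q$-identity in \eqref{tJp} to the case $t = 1$ by a scaling argument, and then to verify that $\tilde J (1) \in L^1 (\mathbb{R}^3) \cap L^\infty (\mathbb{R}^3)$; continuity of $t \mapsto \tilde J (t)$ follows from the resulting scaling identity. The Poisson kernel for $n = 3$ satisfies the homogeneity $P (\lambda \tau, \lambda z) = \lambda^{-3} P (\tau, z)$, which propagates to $\nabla^k P (\lambda \tau, \lambda z) = \lambda^{-3-k} \nabla^k P (\tau, z)$ and $(P \nabla (-\Delta)^{-1} P) (\lambda \tau, \lambda z) = \lambda^{-5} (P \nabla (-\Delta)^{-1} P) (\tau, z)$. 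Substituting $s = t s'$, $y = t y'$ in both integrals of \eqref{tJ} and collecting powers of $t$ (together with the Jacobian $t^4$) yields $\tilde J (t, x) = t^{-5} \tilde J (1, x / t)$. Changing variables in $L^q_x$ gives $\| \tilde J (t) \|_{L^q (\mathbb{R}^3)} = t^{-5 + 3/q} \| \tilde J (1) \|_{L^q (\mathbb{R}^3)} = t^{-3(1-1/q) - 2} \| \tilde J (1) \|_{L^q (\mathbb{R}^3)}$, which is \eqref{tJp}.

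To show $\tilde J (1) \in L^1 \cap L^\infty$, write $\tilde J (1) = \tilde J_1 (1) + \tilde J_2 (1)$ according to the two lines of \eqref{tJ}, and denote $F (s) := (P \nabla (-\Delta)^{-1} P) (s)$. The field $F (s, \cdot)$ is odd (since $P (s, \cdot)$ is even and $\nabla (-\Delta)^{-1} P (s, \cdot)$ is odd), so $\int_{\mathbb{R}^3} F (s, y) dy = 0$. For $\tilde J_2 (1)$, the integration variable $s$ lies in $[1/2, 1]$, and the H\"ormander--Mikhlin-type bound gives $|F (s, y)| + |\nabla F (s, y)| \le C (1+|y|)^{-6}$ uniformly in $s$ there; since $\| P (\tau) \|_{L^1} = 1$, Young's inequality yields $\| \tilde J_2 (1) \|_{L^p (\mathbb{R}^3)} \le \int_{1/2}^1 \| \nabla \cdot F (s) \|_{L^p (\mathbb{R}^3)} ds < \infty$ for $p \in \{1, \infty\}$. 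For $\tilde J_1 (1)$, the cancellation $\int F (s, y) dy = 0$ allows me to rewrite the kernel as
\[
	\nabla P (1-s, x-y) + (y \cdot \nabla) \nabla P (1, x) = R_2 (s; x, y) + Q (s; x, y),
\]
where $R_2 := \nabla P (1-s, x-y) - \nabla P (1-s, x) + (y \cdot \nabla) \nabla P (1-s, x)$ is a second-order Taylor remainder bounded by $C |y|^2 \sup_{|\xi| \le |y|} | \nabla^3 P (1-s, x - \xi) |$, and $Q := (y \cdot \nabla) \int_{1-s}^1 \partial_\tau \nabla P (\tau, x) d\tau$ obeys $|Q| \le C |y| s \sup_{\tau \in [1-s, 1]} | \partial_\tau \nabla^2 P (\tau, x) |$. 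For $s \in [0, 1/2]$ the derivatives $\nabla^3 P (1-s, \cdot)$ and $\partial_\tau \nabla^2 P (\tau, \cdot)$ are uniformly bounded in $L^1 \cap L^\infty$. The scaling of $F$ yields $\| |y|^j F (s) \|_{L^1 (\mathbb{R}^3)} = s^{j-2} \int_{\mathbb{R}^3} |y|^j |F (1, y)| dy$, and these moments converge for $j = 1, 2$ from $|F (1, y)| \le C (1+|y|)^{-6}$. Hence both $\int_0^{1/2} \| |y|^2 F (s) \|_{L^1} ds$ and $\int_0^{1/2} s \| |y| F (s) \|_{L^1} ds$ are finite, and $\tilde J_1 (1) \in L^1 \cap L^\infty$.

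For continuity, the identity $\tilde J (t) = t^{-5} \tilde J (1) (\cdot / t)$ reduces the claim in $L^1$ to continuity of dilations, which is standard for $p < \infty$. For $L^\infty$, dominated convergence applied to the integrals defining $\tilde J_1 (1)$ and $\tilde J_2 (1)$ (with the majorants from the preceding paragraph) shows $\tilde J (1) \in C_0 (\mathbb{R}^3)$ and is thus uniformly continuous; the scaling identity then promotes this to continuity of $t \mapsto \tilde J (t)$ in $L^\infty$.

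The principal obstacle is the analysis of $\tilde J_1 (1)$: the prefactor $s^{-5}$ in $F (s, y)$ renders $\int_0^{1/2} \| \nabla P (1-s) * F (s) \|_{L^p} ds$ divergent if the correction term $(y \cdot \nabla) \nabla P (t, x)$ in \eqref{tJ} is dropped. It is the parity cancellation $\int F = 0$, together with the second-order Taylor reduction enabled by this correction, that transfers the $s^{-5}$ singularity onto the $|y|^2$-moment of $F$, where it is annihilated by the scaling identity $\| |y|^2 F (s) \|_{L^1} = \mathrm{const}$.
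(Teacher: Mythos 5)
Your proof is correct and follows essentially the same route as the paper: the scaling identity $\tilde J(t,x)=t^{-5}\tilde J(1,x/t)$ reduces everything to $t=1$, and convergence of the $s$-integral near $s=0$ is obtained from the cancellation $\int_{\mathbb{R}^3}(P\nabla(-\Delta)^{-1}P)(s,y)\,dy=0$ together with a second-order Taylor expansion that transfers the $s^{-5}$ singularity onto the scale-invariant moment $\bigl\| |y|^2 (P\nabla(-\Delta)^{-1}P)(s)\bigr\|_{L^1(\mathbb{R}^3)}$. Your version is in fact slightly more careful than the paper's: you isolate and bound the cross term $Q=(y\cdot\nabla)\bigl(\nabla P(1,x)-\nabla P(1-s,x)\bigr)$, which arises because the correction in \eqref{tJ} is evaluated at time $t$ rather than $t-s$, and which the paper's displayed Taylor identity silently absorbs.
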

\begin{proof}
It suffices to show that the first term on $\tilde{J}$ is well-defined.
Since $\int_{\mathbb{R}^3} P\nabla(-\Delta)^{-1}P dy = 0$, we see from Taylor's theorem that
\[
\begin{split}
	&\int_0^{t/2} \int_{\mathbb{R}^3}
		\left( \nabla P (t-s,x-y) + (y\cdot\nabla) \nabla P (t,x) \right)
		\cdot \left( P \nabla (-\Delta)^{-1} P \right) (s,y)
	dyds\\
	=&
	\sum_{|\beta| = 2} \int_0^{t/2} \int_{\mathbb{R}^3} \int_0^1
		\frac{\nabla^\beta \nabla P(t-s,x-y+\lambda y)}{\beta!} \lambda
		\cdot (-y)^\beta (P\nabla(-\Delta)^{-1}P) (s,y)
	d\lambda dyds.
\end{split}
\]
Hence Lemma \ref{decay-lin} leads that
\[
\begin{split}
	&\biggl\| \int_0^{t/2} \int_{\mathbb{R}^3}
		\left( \nabla P (t-s,x-y) + (y\cdot\nabla) \nabla P (t,x) \right)
		\cdot \left( P \nabla (-\Delta)^{-1} P \right) (s,y)
	dyds \biggr\|_{L^p (\mathbb{R}^3)}\\
	\le&
	C \int_0^{t/2}
		(t-s)^{-3(1-\frac1p)-3}
		\left\| |y|^2 (P\nabla(-\Delta)^{-1}P)(s,y) \right\|_{L^1 (\mathbb{R}^3)}
	ds\\
	\le&
	C \int_0^{t/2}
		(t-s)^{-3(1-\frac1p)-3}
	ds
	\le
	C t^{-3(1-\frac1p)-2}
\end{split}
\]
for $1 \le p \le \infty$ and $t > 0$.
Thus $\tilde{J} \in C((0,\infty), L^1 (\mathbb{R}^3) \cap L^\infty (\mathbb{R}^3))$.
We see that $\lambda^5 \tilde{J} (\lambda t, \lambda x) = \tilde{J} (t,x)$ for any $\lambda > 0$.
Particularly $\tilde{J} (t,x) = t^{-5} \tilde{J} (1,t^{-1} x)$ and we obtain the second assertion.
\end{proof}
\begin{proposition}\label{propJ2}
	The function $J_2$ defined by \eqref{J2} satisfies \eqref{J2p}.
\end{proposition}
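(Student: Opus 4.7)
The argument parallels Proposition~\ref{proptJ}, treating the four constituent integrals of $J_2$ in \eqref{J2} separately in $L^1(\mathbb{R}^2)\cap L^\infty(\mathbb{R}^2)$ and then obtaining the scaling identity \eqref{J2p} by change of variables. The starting point is two moment cancellations. Writing $F(s):=P\nabla(-\Delta)^{-1}(m\cdot\nabla P)(s)+(m\cdot\nabla P)\nabla(-\Delta)^{-1}P(s)$, the product rule gives $F=(m\cdot\nabla)[P\nabla(-\Delta)^{-1}P]$; since $P(s,\cdot)$ is radial, so is the vector field $P\nabla(-\Delta)^{-1}P$, so that $\int_{\mathbb{R}^2}F\,dy=0$ and $\int_{\mathbb{R}^2}y_j F\,dy=0$ for $j=1,2$. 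For $G(s):=P\nabla(-\Delta)^{-1}J(s)+J\nabla(-\Delta)^{-1}P(s)$, the identity $\int G\,dy=0$ is already recorded in the paper; its first moment $\int y\,G\,dy$ is however nonzero, which is exactly why the explicit correction $(y\cdot\nabla)\nabla P(t,x)$ is built into the third integral of \eqref{J2}.

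For the first integral, the two cancellations for $F$ combine with Taylor's theorem to rewrite it as
\[
M\int_0^{t/2}\!\int_{\mathbb{R}^2}[\nabla P(t-s,x-y)-\nabla P(t-s,x)+(y\cdot\nabla)\nabla P(t-s,x)]\,F(s,y)\,dy\,ds,
\]
whose integrand is the second-order Taylor remainder of $\nabla P(t-s,\cdot)$ at $x$. Minkowski's inequality yields an $L^q$-bound by $\|\nabla^3 P(t-s)\|_{L^q}\int|y|^2|F(s,y)|\,dy$; the scaling $F(s,y)=s^{-4}F(1,s^{-1}y)$ renders $\int|y|^2|F(s,y)|\,dy$ independent of $s$, and Lemma~\ref{decay-lin} gives $\|\nabla^3 P(t-s)\|_{L^q}\le C(t-s)^{-2(1-1/q)-3}$, which integrates over $(0,t/2)$ to $O(t^{-2(1-1/q)-2})$. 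The second integral (over $s\in(t/2,t)$) is handled directly after moving the derivative onto $F$. For the third integral, $\int G\,dy=0$ together with the built-in correction transforms the integrand into $[\nabla P(t-s,x-y)-\nabla P(t-s,x)+(y\cdot\nabla)\nabla P(t-s,x)]\cdot G(s,y)+(y\cdot\nabla)[\nabla P(t,x)-\nabla P(t-s,x)]\cdot G(s,y)$: the first piece is a second-order remainder in $y$ estimated as above, while the second is of order $s|y|\,|\partial_t\nabla^2 P(t,x)|\,|G(s,y)|$, and both integrate in $s$ with the correct decay. The fourth integral is direct.

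The scaling identity \eqref{J2p} follows from the self-similarities $P(\lambda t,\lambda x)=\lambda^{-2}P(t,x)$ and $J(\lambda t,\lambda x)=\lambda^{-3}J(t,x)$ (the latter read off from \eqref{scJ} with $n=2$, $\theta=1$), which propagate to $F(\lambda t,\lambda x)=\lambda^{-4}F(t,x)$ and $G(\lambda t,\lambda x)=\lambda^{-4}G(t,x)$. Substituting $s=\lambda s'$, $y=\lambda y'$ in each of the four integrals defining $J_2(\lambda t,\lambda x)$ and collecting powers of $\lambda$ shows every component picks up the same factor $\lambda^{-4}$, so $J_2(t,x)=t^{-4}J_2(1,t^{-1}x)$ upon setting $\lambda=t^{-1}$, whence \eqref{J2p} follows.

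The principal obstacle is verifying the cancellations $\int F\,dy=\int y\,F\,dy=0$: without them the first component of $J_2$ has a non-integrable singularity at $s=0$ since $\|F(s)\|_{L^1}\sim s^{-2}$. The rewriting $F=(m\cdot\nabla)[P\nabla(-\Delta)^{-1}P]$ together with the parity of $P\nabla(-\Delta)^{-1}P$ under $y\mapsto-y$ resolves this, and the decay $|F(s,y)|\lesssim|y|^{-5}$ at infinity guarantees the finiteness of $\int|y|^2|F(s,y)|\,dy$, closing the Taylor argument.
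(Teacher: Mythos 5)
Your proof is correct and follows essentially the same route as the paper's: vanishing of the zeroth and first moments of $P\nabla(-\Delta)^{-1}(m\cdot\nabla P)+(m\cdot\nabla P)\nabla(-\Delta)^{-1}P$, a second-order Taylor expansion of $\nabla P(t-s,\cdot)$ estimated via Lemma \ref{decay-lin} together with the $s$-independence of the weighted $L^1$-norms by self-similarity, the built-in correction $(y\cdot\nabla)\nabla P(t,x)$ for the $J$-terms (whose first moment is genuinely nonzero), and the scaling identity $J_2(t,x)=t^{-4}J_2(1,t^{-1}x)$. The only cosmetic difference is that you derive both moment cancellations from the identity $F=(m\cdot\nabla)\bigl[P\nabla(-\Delta)^{-1}P\bigr]$ combined with the oddness of $P\nabla(-\Delta)^{-1}P$, whereas the paper uses the skew-adjointness of $\nabla(-\Delta)^{-1}$ for the zeroth moment and a parity argument for the first; both are valid, and your explicit treatment of the cross term $(y\cdot\nabla)\bigl(\nabla P(t,x)-\nabla P(t-s,x)\bigr)$ is in fact slightly more careful than the paper's.
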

\begin{proof}
Since $\nabla (-\Delta)^{-1}$ is skew adjoint in $L^2 (\mathbb{R}^2)$, we see that
\[
	\int_{\mathbb{R}^2} \bigl(
			P \nabla (-\Delta)^{-1} (m\cdot\nabla P)
			+
			(m\cdot\nabla P) \nabla (-\Delta)^{-1} P
		\bigr) (s,y)
	dy
	= 0.
\]
Moreover,
\[
	\int_{\mathbb{R}^2} y_j \bigl(
			P \nabla (-\Delta)^{-1} (m\cdot\nabla P)
			+
			(m\cdot\nabla P) \nabla (-\Delta)^{-1} P
		\bigr) (s,y)
	dy
	= 0
\]
since $y_j (P \nabla (-\Delta)^{-1} (m\cdot\nabla P) + (m\cdot\nabla P) \nabla (-\Delta)^{-1} P ) (s,y)$ is an odd function in $y_1$ or $y_2$.
Hence Taylor's theorem says that
\[
\begin{split}
	&\int_0^{t/2}
		\nabla P (t-s)
		*
		\bigl(
			P \nabla (-\Delta)^{-1} (m\cdot\nabla P)
			+
			(m\cdot\nabla P) \nabla (-\Delta)^{-1} P
		\bigr) (s)
	ds\\
	=&
	\sum_{|\beta| = 2} \int_0^{t/2} \int_{\mathbb{R}^2} \int_0^1
		\frac{\nabla^\beta \nabla P(t-s,x-y+\lambda y)}{\beta!} \lambda\\
		&\hspace{5mm}\cdot (-y)^\beta \bigl(
			P \nabla (-\Delta)^{-1} (m\cdot\nabla P)
			+
			(m\cdot\nabla P) \nabla (-\Delta)^{-1} P
		\bigr) (s,y)
	d\lambda dyds.
\end{split}
\]
Thus we see from Lemma \ref{decay-lin} that
\[
\begin{split}
	&\biggl\| \int_0^{t/2} \nabla P (t-s)
		*
		\bigl(
			P \nabla (-\Delta)^{-1} (m\cdot\nabla P)
			+
			(m\cdot\nabla P) \nabla (-\Delta)^{-1} P
		\bigr) (s)
	ds \biggr\|_{L^p (\mathbb{R}^2)}\\
	\le&
	C \int_0^{t/2}
		(t-s)^{-2 (1-\frac1p) - 3}
		\left\| |y|^2 \bigl(
			P \nabla (-\Delta)^{-1} (m\cdot\nabla P)
			+
			(m\cdot\nabla P) \nabla (-\Delta)^{-1} P
		\bigr) (s)
		\right\|_{L^1 (\mathbb{R}^2)}
	ds\\
	\le&
	C \int_0^{t/2}
		(t-s)^{-2 (1-\frac1p) - 3}
	ds
	\le
	C t^{-2 (1-\frac1p) - 2}
\end{split}
\]
for $1 \le p \le \infty$.
In  a similar procedure as in the proof of Proposition \ref{proptJ}, we see that
\[
\begin{split}
	&\int_0^{t/2} \int_{\mathbb{R}^2}
		\left( \nabla P (t-s,x-y) + (y\cdot\nabla) \nabla P(t,x) \right)\\
		&\hspace{15mm}\cdot
		\bigl(
			P \nabla (-\Delta)^{-1} J + J \nabla (-\Delta)^{-1} P
		\bigr)
		(s,y)
	dyds\\
	=&
	\sum_{|\beta| = 2} \int_0^{t/2} \int_{\mathbb{R}^2} \int_0^1
		\frac{\nabla^\beta \nabla P (t-s,x-y+\lambda y)}{\beta!} \lambda\\
		&\hspace{5mm}\cdot
		(-y)^\beta \bigl(
			P \nabla (-\Delta)^{-1} J + J \nabla (-\Delta)^{-1} P
		\bigr) (s,y)
	d\lambda dyds
\end{split}
\]
and
\[
\begin{split}
	&\biggl\| \int_0^{t/2} \int_{\mathbb{R}^2}
		\left( \nabla P (t-s,x-y) + (y\cdot\nabla) \nabla P(t,x) \right)\\
		&\hspace{15mm}\cdot
		\bigl(
			P \nabla (-\Delta)^{-1} J + J \nabla (-\Delta)^{-1} P
		\bigr)
		(s,y)
	dyds \biggr\|_{L^p (\mathbb{R}^2)}
	\le
	C t^{-2 (1-\frac1p) - 2}.
\end{split}
\]
Therefore $J_2$ is well-defined in $C ((0,\infty),L^1 (\mathbb{R}^2) \cap L^\infty (\mathbb{R}^2))$.
The scaling-properties of $P$ say that $J_2 (t,x) = t^{-4} J_2 (1,t^{-1}x)$.
Hence we get the second assertion of \eqref{J2p}.
\end{proof}

\end{document}